\newtheorem{theorem}{Theorem}[section]
\newtheorem{lemma}[theorem]{Lemma}
\newtheorem{prop}[theorem]{Proposition}
\newtheorem{coro}[theorem]{Corollary}
\theoremstyle{definition}
\theoremstyle{remark}
\newtheorem{remark}[theorem]{Remark}
\numberwithin{equation}{section}
\begin{document}

\title[Mean curvature flow of higher codimension]{Mean curvature flow of higher codimension in Riemannian manifolds}

\author{Kefeng Liu}
\address{Center of Mathematical Sciences, Zhejiang University,
             Hangzhou,  310027, People¡¯s Republic of China; Department of Mathematics, UCLA, Box 951555, Los Angeles, CA, 90095-1555 }
\email{liu@cms.zju.edu.cn, liu@math.ucla.edu}

\author{Hongwei Xu}

\address{Center of Mathematical Sciences, Zhejiang University,
             Hangzhou,  310027, People¡¯s Republic of China}
\email{xuhw@cms.zju.edu.cn}

\author{Entao Zhao}
\address{Center of Mathematical Sciences, Zhejiang University,
             Hangzhou,  310027, People¡¯s Republic of China}
\email{zhaoet@cms.zju.edu.cn}

\thanks{Supported by the National Natural Science Foundation
of China, Grant No. 11071211; the Trans-Century Training Programme
Foundation for Talents by the Ministry of Education of China, and
the China Postdoctoral Science Foundation, Grant No. 20090461379.}

\subjclass[2000]{53C44, 53C40}

\date{}

\keywords{Mean curvature flow, submanifolds, convergence theorem,
curvature pinching, Riemannian manifolds.}

\begin{abstract}We investigate the convergence of the mean curvature flow of
arbitrary codimension in Riemannian manifolds with bounded geometry.
We prove that if the initial submanifold satisfies a pinching
condition, then along the mean curvature flow the submanifold
contracts smoothly to a round point in finite time. As a consequence
we obtain a differentiable sphere theorem for submanifolds in a
Riemannian manifold.
\end{abstract}

\maketitle

\section{Introduction}

\label{intro}Let $F_0:\, M^{n}\rightarrow N^{n+d}$ be a smooth
immersion from an $n$-dimensional Riemannian manifold without
boundary to an $(n+d)$-dimensional Riemannian manifold. Consider a
one-parameter family of smooth immersions $F:\, M\times
[0,T)\rightarrow N$ satisfying
\begin{eqnarray}
\label{MCF}\left\{
\begin{array}{ll}
\frac{\partial}{\partial t}F(x,t)=H(x,t),\\
F(x,0)=F_0(x),
\end{array}\right.
\end{eqnarray}
where $H(x,t)$ is the mean curvature vector of $F_t(M)$ and
$F_t(x)=F(x,t)$. We call $F:\, M\times [0,T)\rightarrow N$ the mean
curvature flow with initial value $F_0:\, M\rightarrow N$.

The mean curvature flow was proposed by Mullins \cite{Mu} to
describe the formation of grain boundaries in annealing metals. In
\cite{B}, Brakke introduced the motion of a submanifold by its mean
curvature in arbitrary codimension and constructed a generalized
varifold solution for all time. For the classical solution of the
mean curvature flow, many works on hypersurfaces have been done.
Huisken \cite{H1} showed that if the initial hypersurface in the
Euclidean space is compact and uniformly convex, then the mean
curvature flow converges to a round point in a finite time. Later,
he generalized this convergence theorem to the mean curvature flow
of hypersurfaces in a Riemannian manifold in \cite{H2}. He also
studied in \cite{H3} the mean curvature flow of hypersurfaces
satisfying a pinching condition in a sphere.

For the mean curvature flow of submanifolds with higher
codimensions, fruitful results were obtained for submanifolds with
low dimension or admitting some special structures, see
\cite{Sm,SW,WaM1,WaM4,WaM2,WaM5,WaM6,WaM3} etc. for example.
Recently, Andrews and Baker \cite{Andrews-Baker} proved a
convergence theorem for the mean curvature flow of closed
submanifolds satisfying a suitable pinching condition in the
Euclidean space. In \cite{LXYZ1,LXZ}, the authors of the present
paper and Ye investigated the integral curvature pinching conditions
that assure the convergence of the mean curvature flow of
submanifolds in an Euclidean space or a sphere. More recently, Baker
\cite{Baker} and Liu-Xu-Ye-Zhao \cite{LXYZ2} generalized
Andrews-Baker's convergence theorem \cite{Andrews-Baker} for the
mean curvature flow of submanifolds in the Euclidean space to the
case of the mean curvature flow of arbitrary codimension in space
forms.

In this paper, we study the convergence of the mean curvature flow
of submanifolds in a general Riemannian manifold.  Let $F:\,
M\rightarrow N$ be a smooth submanifold. Suppose the sectional
curvature $K_N$, the first covariant derivative
$\bar{\nabla}\bar{R}$ of the Riemannian curvature tensor, and the
injectivity radius ${\rm inj}(N)$ of the ambient space $N$ satisfy
\begin{eqnarray}
\label{K_N} -K_1\leq  K_N \leq K_2,\\
\label{1st-deri-N} |\bar{\nabla}\bar{R}|\leq
L,\ \ \ \ \\
\label{inj-radius-N} {\rm inj}(N)\geq i_N,\ \ \
\end{eqnarray}
for nonnegative constants $K_1$, $K_2$, $L$ and positive constant
$i_N$. Our main result is the following:

\begin{theorem}\label{main-thm}
Let $F:\,M^{n} \rightarrow N^{n+d}$ be an $n$-dimensional smooth
closed and connected submanifold in an $(n+d)$-dimensional smooth
complete Riemannian manifold satisfying
(\ref{K_N})-(\ref{inj-radius-N}). There is an explicitly computable
nonnegative constant $b_0$ depending on $n$, $d$, $K_1$, $K_2$ and
$L$ such that if $F$ satisfies
\begin{eqnarray}
\label{pinch-cond} |A|^2<\begin{cases}
            \frac{4}{3n}|H|^2-b_0, \ &n = 2, 3, \\
            \frac{1}{n-1}|H|^2-b_0, \ &n \geq 4,
        \end{cases}
\end{eqnarray}
then the mean curvature flow with $F$ as initial value contracts to
a round point in finite time.
\end{theorem}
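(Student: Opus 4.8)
The plan is to follow the now-standard scheme of Huisken and Andrews--Baker, adapting it to the Riemannian setting by absorbing the ambient curvature error terms into the pinching constant $b_0$. First I would set up the evolution equations along the flow for the induced metric, the second fundamental form $A$, the mean curvature vector $H$, and the quantities $\abs{A}^2$ and $\abs{H}^2$; in higher codimension one must be careful to track the normal connection and the Simons-type identity picks up terms like $\bar R$-curvature contractions and $\bar\nabla\bar R$ contributions. Using (\ref{K_N})--(\ref{1st-deri-N}) these extra terms are bounded by $c(n,d)(K_1+K_2)$ times quadratic expressions in $A$ plus $c(n,d)L$ times linear expressions. The first main step is then to prove that the pinching condition (\ref{pinch-cond}) is preserved by the flow: writing the pinching in the form $\abs{A}^2 \le a\abs{H}^2 - b_0$ with $a = \frac{4}{3n}$ for $n=2,3$ and $a=\frac{1}{n-1}$ for $n\ge 4$, one computes the evolution of $f := \abs{A}^2 - a\abs{H}^2 + b_0$ (or an equivalent pinching function) and applies the maximum principle. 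The reaction terms contributing to $\partial_t f$ split into an Andrews--Baker-type term that is nonpositive precisely because of the constants $\frac{4}{3n}$ and $\frac{1}{n-1}$ (this is exactly where those specific pinching thresholds come from, as in the Euclidean case), plus ambient-curvature error terms; choosing $b_0$ large enough in terms of $n,d,K_1,K_2,L$ makes the sum nonpositive wherever $f=0$, so $f<0$ persists.

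The second step is to control the geometry and rule out singularities other than the shrinking one. One shows that if $\abs{H}$ stays bounded then the flow can be continued, so $\abs{H}^2\to\infty$ at the maximal time $T$; by the preserved pinching this forces $\abs{A}^2\to\infty$ as well, and in fact the pinching ratio improves. The key estimates here are a gradient estimate for $\abs{H}^2$ and, crucially, a pinching-improvement estimate showing that the traceless second fundamental form $\mathring A$ satisfies $\abs{\mathring A}^2 \le \varepsilon \abs{H}^2 + C_\varepsilon$ along the flow for every $\varepsilon>0$ (equivalently, $\abs{A}^2 - \frac1n\abs{H}^2$ becomes small compared to $\abs{H}^2$ where $\abs{H}^2$ is large). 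This is proved by considering functions of the form $f_\sigma = (\abs{A}^2 - \frac1n\abs{H}^2)/(\abs{H}^2)^{1-\sigma}$ for small $\sigma>0$, deriving their evolution equations with a Poincar\'e-type inequality to handle the gradient terms, and applying the maximum principle together with an $L^p$-iteration (Stampacchia--De Giorgi) argument to dominate the error terms coming from the ambient curvature; here the injectivity-radius and curvature bounds (\ref{K_N})--(\ref{inj-radius-N}) are used to get local control. One also needs a bound keeping $\abs{\bar\nabla\bar A}$-type derivative quantities under control, which again costs some ambient-geometry error terms absorbed into constants.

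The third step is the rescaling and convergence-to-a-round-point argument. Since $\abs{A}^2\to\infty$, the flow contracts to a single point $p\in N$ as $t\to T$; we rescale the flow around $p$ by $(T-t)^{-1/2}$ (and use normal coordinates at $p$, so the ambient metric becomes Euclidean to higher order and the error terms decay after rescaling). The pinching-improvement estimate shows that in the rescaled flow $\abs{\mathring A}^2/\abs{H}^2\to 0$ and $\abs{H}^2$ converges to the constant appropriate for the shrinking sphere, so the rescaled submanifolds converge to a round $n$-sphere; translating back, the original flow contracts to a round point. The main obstacle I expect is the pinching-improvement estimate in Step 2: in the Riemannian setting the evolution equation for $f_\sigma$ acquires ambient-curvature error terms that are only lower-order in $\abs{A}$ but nonetheless must be shown not to destroy the maximum-principle/iteration argument, which requires careful bookkeeping of how each error scales with $\abs{H}^2$ and using that $b_0$ is already large; getting the Stampacchia iteration to close with these extra terms, rather than the clean reaction terms of the space-form case, is the technical heart of the proof. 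A secondary difficulty is making the constant $b_0$ genuinely \emph{explicitly computable}, which forces one to keep all constants in the curvature estimates effective rather than using compactness/contradiction arguments.
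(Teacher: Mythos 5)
Your proposal follows essentially the same route as the paper: preservation of $|A|^2\le a|H|^2-b_0$ by the maximum principle with $b_0$ chosen large enough to absorb the $\bar R$ and $\bar\nabla\bar R$ error terms, the improved pinching of $\mathring A$ via $f_\sigma=|\mathring A|^2/|H|^{2(1-\sigma)}$ with an $L^p$--Stampacchia iteration, a gradient estimate for $H$, and a final rescaling argument. The only notable (and inessential) difference is the normalization at the end: the paper dilates the ambient metric by a volume-normalizing factor $\psi(t)^2$ and reparameterizes time so that $(N,\psi^2h,P)$ converges to Euclidean space, rather than performing a parabolic $(T-t)^{-1/2}$ blow-up in normal coordinates; also note that finiteness of the maximal time $T$ (which the paper proves by an ODE comparison with a time-dependent $b(t)$ blowing up in finite time) is needed for the gradient estimate and should not be taken for granted in the Riemannian setting.
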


Theorem \ref{main-thm} can be considered as an extension of the
convergence result of Huisken in \cite{H2} to higher codimension
case under a curvature pinching condition rather than the convexity
of the initial hypersurface. On the other hand, if
$N=\mathbb{R}^{n+d}$, then $b_0=0$. From Proposition 7 of
\cite{Andrews-Baker} we see that, under their initial curvature
pinching condition (\ref{pinch-cond}) is satisfied after a short
time interval. Hence our Theorem \ref{main-thm} may also be
considered as a generalization of the convergence theorem in
\cite{Andrews-Baker}.

By the Nash imbedding theorem, every compact Riemannian manifold can
be isometrically embedded into an Euclidean space or a higher
dimensional Riemannian manifold as a submanifold, in general of
higher codimension. By using mean curvature flow techniques
developed in this paper we can study certain important problems in
Riemannian geometry which will be the content of our forthcoming
works.

As a consequence of Theorem \ref{main-thm}, we obtain the following
differentiable sphere theorem for submanifolds in a Riemannian
manifold.

\begin{coro}Under the assumption of Theorem \ref{main-thm}, $M$ is
diffeomorphic to the standard unit $n$-sphere $\mathbb{S}^n$.
\end{coro}

\begin{remark}
In \cite{Gu-Xu,XG,Xu-Zhao}, some differentiable sphere theorems for
simply connected submanifolds in certain Riemannian manifolds were
obtained by using convergence results for the Ricci flow.
\end{remark}

The paper is organized as follows. In Section 2, we introduce some
basic equations in submanifold theory, and recall some evolution
equations along the mean curvature flow. In Section 3, we show that
the pinching condition (\ref{pinch-cond}) for a suitable $b_0$ is
preserved along the mean curvature flow. A pinching estimate for the
tracefree second fundamental form is obtained in Section 4, which
implies that the submanifold becomes spherical as $t$ tends to the
maximal existence time. We also show that, under the initial
pinching condition, the maximal existence time is finite. We give an
estimate of the gradient of the mean curvature in Section 5, which
is used to compare the mean curvature at different points. In
Section 6, we show that the submanifold shrinks to a single point in
finite time. After the dilation of the ambient space and a
reparameterization of time, the ambient space will converges to the
Euclidian space and the submanifold will converges to a totally
umbilical sphere with the same volume as the initial submanifold.

\section{Preliminaries}

\label{Pre} Let $F:\, M^{n}\rightarrow N^{n+d}$ be a smooth immersion
from an $n$-dimensional Riemannian manifold $M^n$ without boundary
to an $(n+d)$-dimensional Riemannian manifold $N^{n+d}$. We shall
make use of the following convention on the range of indices:
$$1\leq i,j,k,\cdots \leq n,\ \ 1\leq A,B,C, \cdots \leq n+d,\ \
and\ \ n+1\leq\alpha,\beta,\gamma, \cdots \leq n+d.$$

Choose a local orthonormal frame field $\{e_A\}$ in $N$ such that
$e_i$'s are tangent to $M$. Let $\{\omega_A\}$ be the dual frame
field of  $\{e_A\}$. The metric $g$ and the volume form $d\mu$ of
$M$ are $g=\sum_i \omega_i\otimes\omega_i$ and $d\mu
=\omega_1\wedge\cdots\wedge\omega_n$.

 For any $x\in M$, denoted by $N_xM$
the normal space of $M$ at point $x$, which is  the orthogonal
complement of $T_xM$ in $F^{\ast}T_{F(x)}N$. Here we identify $T_xM$
with its image under the map $F_\ast$. Denote by $\bar{\nabla}$ the
Levi-Civita connection on $N$. The Riemannian curvature tensor
$\bar{R}$  of $N$ is defined by
\begin{equation*}\bar{R}(U,V)W=-\bar{\nabla}_U\bar{\nabla}_VW+\bar{\nabla}_V\bar{\nabla}_UW+\bar{\nabla}_{[U,V]}W
\end{equation*}
for vector fields $U,V$ and $ W$ tangent to $N$. The induced
connection $\nabla$ on $M$ is defined by
\begin{equation*}\nabla_XY=(\bar{\nabla}_XY)^{\top},
\end{equation*} for $X,Y$ tangent to $M$, where $(\ )^\top$ denotes
tangential component. Let $R$ be the Riemannian curvature tensor of
$M$.

Given a normal vector field $\xi$ along $M$, the induced connection
$\nabla^\bot$ on the normal bundle is defined by
\begin{equation*}\nabla^\bot
_X\xi=(\bar{\nabla}_X\xi)^{\bot},\end{equation*} where $(\ )^{\bot}$
denotes the normal component. Let $R^\bot$ denote the normal
curvature tensor.

 The second fundamental form is defined to be
$$A(X,Y)=(\bar{\nabla}_XY)^\bot$$ as a section of the tensor bundle
$T^\ast M\otimes T^\ast M\otimes NM$, where $T^\ast M$ and $NM$ are
the cotangential bundle and  the normal bundle on $M$. The mean
curvature vector $H$ is the trace of the second fundamental form
defined by $H={\rm tr}_{g}A$.

The first covariant derivative of $A$ is defined as
\begin{equation*}
(\widetilde{\nabla}_XA)(Y,Z)=\nabla^\bot_XA(Y,Z)-A(\nabla_XY,Z)-A(Y,\nabla_XZ),
\end{equation*}
where $\widetilde{\nabla}$ is the connection on $T^\ast M\otimes
T^\ast M\otimes NM$. Similarly, we can define the second covariant
derivative of $A$.

Under the local orthonormal frame field, the components of second
fundamental form and its first and second covariant derivatives of
$A$ are defined by
\begin{equation*}\begin{split}
h^\alpha_{ij}=&\langle
A(e_i,e_j), e_\alpha\rangle,\\
\nabla _kh^\alpha_{ij}=&\langle (\widetilde{\nabla}
_{e_k}A)(e_i,e_j),
e_\alpha\rangle,\\
\nabla_l\nabla_kh^\alpha_{ij}=&\langle (\widetilde{\nabla}
_{e_l}\widetilde{\nabla}_{e_k}A)(e_i,e_j), e_\alpha\rangle.
\end{split}
\end{equation*}
The Laplacian of $A$ is defined by $\Delta h^\alpha_{ij}=\sum_k
\nabla_k\nabla_kh^{\alpha}_{ij}$.

We define the tracefree second fundamental form by
$\mathring{A}=A-\frac{1}{n}g\otimes H$, whose components are
$\mathring{h}^\alpha_{ij}=h^\alpha_{ij}-\frac{1}{n}H^\alpha
\delta_{ij}$, where $H^\alpha=\sum_kh^\alpha_{kk}$. Obviously, we
have $\sum_i\mathring{h}^\alpha_{ii}=0$.

Let \begin{equation*}
\begin{split}
R_{ijkl}&=g(R(e_i,e_j)e_k,e_l),\\
\bar{R}_{ABCD}&=\langle\bar{R}(e_A,e_B)e_C,e_D\rangle,\\
R^\bot_{ij\alpha\beta}&=\langle{R^\bot}(e_i,e_j)e_\alpha,e_\beta\rangle.
\end{split}
\end{equation*}
Then we have the following Gauss, Codazzi and Ricci equations.
\begin{equation*}
\begin{split}
R_{ijkl}&=\bar{R}_{ijkl}+\sum_\alpha\Big( h^\alpha_{ik}h^\alpha_{jl} -h^\alpha_{il}h^\alpha_{jk}\Big),\\
\nabla_kh^\alpha_{ij}-\nabla_j h^\alpha_{ik}&=-\bar{R}_{\alpha ijk},\\
R^\bot_{ij\alpha\beta}&=\bar{R}_{ij\alpha\beta}+\sum_k\Big(h^\alpha_{ik}h^\beta_{jk}-h^\alpha_{jk}h^\beta_{ik}\Big).
\end{split}
\end{equation*}

It is standard to show the short-time existence of the mean
curvature flow (\ref{MCF}) with closed initial value. Since the mean
curvature flow is a (degenerate) quasilinear parabolic evolution
equation, one can obtain the short-time existence by using the
Nash-Moser implicit function theorem as in \cite{Ha}. One can
also use the De Turck trick to modify the mean curvature
flow equation to a strongly parabolic equation, and the short-time
existence follows from the standard parabolic theory.

Let $F:\, M^n\times [0,T)\rightarrow N^{n+d}$ be a mean curvature flow
solution. We have the following evolution equations.

\begin{lemma}\label{lem-evolution}Along the mean curvature flow we have
\begin{equation}\label{evo-volform}
\frac{\partial}{\partial t}d\mu_t=-|H|^2d\mu_t,
\end{equation}

\begin{equation}\label{evu-|A|^2}
\begin{split}
\frac{\partial}{\partial t}|A|^2=&\Delta |A|^2-2|\nabla
A|^2+2\sum_{\alpha,\beta}\Big(\sum_{i,j}h^\alpha_{ij}h^{\beta}_{ij}\Big)^2
+2\sum_{i,j,\alpha,\beta}\Big[\sum_{p}\Big(h_{ip}^\alpha
h_{jp}^\beta-h_{jp}^\alpha h_{ip}^\beta\Big)\Big]^2\\
&+4\sum_{i,j,p,q}\bar{R}_{ipjq}\Big(\sum_{\alpha} h_{pq}^\alpha
h_{ij}^\alpha\Big)-4\sum_{j,k,p}\bar{R}_{kjkp}\Big(\sum_{i,\alpha}h^\alpha_{pi}h^\alpha_{ij}\Big)\\
&+2\sum_{k,\alpha,\beta}\bar{R}_{k\alpha
k\beta}\Big(\sum_{i,j}h_{ij}^\alpha
h_{ij}^\beta\Big)-8\sum_{j,p,\alpha,\beta}\bar{R}_{jp\alpha
\beta}\Big(\sum_{i}h_{ip}^\alpha h_{ij}^\beta\Big)\\
&+2\sum_{i,j,k,\beta}\bar{\nabla}_k
\bar{R}_{kij\beta}h_{ij}^\beta-2\sum_{i,j,k,\beta}\bar{\nabla}_i
\bar{R}_{jkk\beta}h_{ij}^\beta,
\end{split}\end{equation}
\begin{equation}\label{evu-|H|^2}
\begin{split}
\frac{\partial}{\partial t}|H|^2=&\Delta |H|^2-2|\nabla
H|^2+2\sum_{i,j}\Big(\sum_{\alpha}H^{\alpha}h^\alpha_{ij}\Big)^2+2\sum_{k,\alpha,\beta}\bar{R}_{k\alpha
k\beta}H^\alpha H^\beta.
\end{split}\end{equation}
\end{lemma}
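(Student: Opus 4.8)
The final statement to prove is Lemma~\ref{lem-evolution}, the three evolution equations for $d\mu_t$, $|A|^2$ and $|H|^2$ along the flow. The plan is to derive each one from the first-variation structure of the flow $\partial_t F = H$, together with the Gauss--Codazzi--Ricci equations already recorded above and a Simons-type identity for $\Delta A$ in the ambient Riemannian setting.

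First I would establish the evolution of the induced metric. Since $\partial_t F = H$ is a normal vector field, a direct computation gives $\partial_t g_{ij} = \langle \bar\nabla_{e_i}H, e_j\rangle + \langle e_i, \bar\nabla_{e_j}H\rangle = -2\langle A(e_i,e_j), H\rangle = -2\sum_\alpha H^\alpha h^\alpha_{ij}$. From $d\mu_t = \sqrt{\det g}\,dx$ and $\partial_t \log\sqrt{\det g} = \tfrac12 g^{ij}\partial_t g_{ij} = -\sum_\alpha H^\alpha H^\alpha = -|H|^2$ one gets \eqref{evo-volform} immediately. Next I would record the companion variations needed downstream: the evolution of the second fundamental form itself. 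Commuting $\partial_t$ past covariant derivatives and using $\partial_t F = H$ yields, after a standard but somewhat lengthy computation, a Simons-type equation of the schematic form $\partial_t h^\alpha_{ij} = \Delta h^\alpha_{ij} + (\text{quadratic in }A)^\alpha_{ij} + (\text{ambient curvature }\times A)^\alpha_{ij} + (\bar\nabla\bar R)^\alpha_{ij}$; this is the Riemannian-ambient analogue of Huisken's hypersurface formula and of the space-form version in \cite{Andrews-Baker,Baker,LXYZ2}. The three groups of lower-order terms are exactly the $2\sum(\cdot)^2$ reaction terms, the six ambient-curvature contractions, and the two $\bar\nabla\bar R$ terms appearing in \eqref{evu-|A|^2}.

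From the evolution of $h^\alpha_{ij}$, equation \eqref{evu-|A|^2} follows by computing $\partial_t |A|^2 = \partial_t \sum_{\alpha,i,j} (h^\alpha_{ij})^2$. One must be careful that $|A|^2 = g^{ik}g^{jl}h^\alpha_{ij}h^\alpha_{kl}$ so the metric variation contributes: differentiating the inverse-metric factors produces extra terms $+4\sum_\alpha H^\beta h^\beta_{ik} h^\alpha_{ij} h^\alpha_{kj}$ from the two $g^{-1}$'s. Combining with $2h^\alpha_{ij}\partial_t h^\alpha_{ij}$ and with $\Delta|A|^2 = 2h^\alpha_{ij}\Delta h^\alpha_{ij} + 2|\nabla A|^2$, the reaction terms reorganize into the two squared expressions $\sum_{\alpha,\beta}(\sum_{ij}h^\alpha_{ij}h^\beta_{ij})^2$ and $\sum_{ij\alpha\beta}[\sum_p(h^\alpha_{ip}h^\beta_{jp}-h^\alpha_{jp}h^\beta_{ip})]^2$, using the Ricci equation to identify the normal-curvature square. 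The ambient terms pass through unchanged and \eqref{evu-|A|^2} results. For \eqref{evu-|H|^2} I would either trace the $h^\alpha_{ij}$-evolution (noting $\partial_t H^\alpha$ picks up a metric-variation term $+2H^\beta h^\beta_{ij}h^\alpha_{ij}$ when contracting, and that the Codazzi terms telescope so no $\bar\nabla\bar R$ survives in $|H|^2$), or compute $\partial_t|H|^2$ directly; either way one lands on $\Delta|H|^2 - 2|\nabla H|^2 + 2\sum_{ij}(\sum_\alpha H^\alpha h^\alpha_{ij})^2 + 2\sum_{k\alpha\beta}\bar R_{k\alpha k\beta}H^\alpha H^\beta$.

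The main obstacle is the bookkeeping in the Simons-type identity for $\partial_t h^\alpha_{ij}$ in a general Riemannian target: one must commute covariant derivatives (producing intrinsic curvature $R$, converted back to $A$ and $\bar R$ via Gauss), commute the normal connection (producing $R^\bot$, converted via Ricci), and carefully track the terms involving $\bar\nabla\bar R$ that arise because the ambient curvature is no longer parallel. Correctly assembling signs and contraction indices across these three commutations — and verifying that the $\bar\nabla\bar R$ terms cancel in the $|H|^2$ equation but not in the $|A|^2$ equation — is where essentially all the work lies; the passage from the tensor evolution to the scalar evolutions is then routine contraction plus the metric-variation corrections noted above.
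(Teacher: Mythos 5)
Your outline follows the standard derivation that the paper itself relies on without proof (the lemma is simply stated, deferring to the computations in \cite{Andrews-Baker}, \cite{Baker} and \cite{H2}): vary the metric and volume form under $\partial_t F=H$, derive the Simons-type evolution of $h^\alpha_{ij}$, and contract while keeping the inverse-metric and normal-frame corrections. All the structural points you identify --- the variation $\partial_t g_{ij}=-2\sum_\alpha H^\alpha h^\alpha_{ij}$, the $g^{-1}$ corrections producing the quadratic reaction terms, the Gauss/Ricci conversions of the commutators, and the cancellation of the $\bar{\nabla}\bar{R}$ terms in the traced equation for $|H|^2$ --- are correct, so the only distance between your proposal and a complete proof is carrying out the commutator bookkeeping you describe, which is precisely the part the paper also omits.
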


Throughout this paper, we assume that the submanifold is connected,
and the ambient space $N$ satisfies (\ref{K_N})-(\ref{inj-radius-N})
for nonnegative constants $K_1$, $K_2$, $L$ and positive constant
$i_N$. By Berger's inequality (see \cite{Goldberg} for a proof), we
see that the $|\bar{R}_{ACBC}|\leq \frac{1}{2}(K_1+K_2)$ for $A\neq
B$ and $|\bar{R}_{ABCD}|\leq \frac{2}{3}(K_1+K_2)$ for all distinct
indices $A, B, C, D$.

\section{A preserved curvature pinching condition}

In this section, we prove that the  pinching condition
(\ref{pinch-cond}) for a suitable $b_0>0$ is preserved along the
mean curvature. But first we prove the following lemma.

\begin{lemma}\label{lem-gradient-ineq}For any $\eta>0$ we have the following inequalities.
\begin{equation}
\label{gradient-ineq-1} |\nabla
A|^2\geq\bigg(\frac{3}{n+2}-\eta\bigg)|\nabla H|^2
-\frac{2}{n+2}\bigg(\frac{2}{n+2}\eta^{-1}-\frac{n}{n-1} \bigg)
|w|^2,
\end{equation}
\begin{equation}
\begin{split}
 \label{gradient-ineq-2}|\nabla {A}|^2-\frac{1}{n}|\nabla
H|^2\geq&
\frac{n-1}{2n+1}|\nabla A|^2-\frac{2n}{(n-1)(2n+1)}|w|^2 \\
\geq&\frac{n-1}{2n+1}|\nabla A|^2-C(n,d)(K_1+K_2)^2.
\end{split}
\end{equation}
Here  $w=\sum_{i,j,\alpha} \bar{R}_{\alpha jij}
e_i\otimes\omega_\alpha$ and $C(n,d)=\frac{n^4d}{2(n-1)(2n+1)}$.
\end{lemma}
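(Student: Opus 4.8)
The plan is to reduce everything to a pointwise algebraic inequality for the components $\nabla_k h^\alpha_{ij}$, exploiting the Codazzi equation $\nabla_k h^\alpha_{ij}-\nabla_j h^\alpha_{ik}=-\bar R_{\alpha ijk}$. First I would decompose the tensor $\nabla_k h^\alpha_{ij}$ into its totally symmetric part (in $i,j,k$) and a remainder governed by the ambient curvature. Concretely, set $E_{kij}^\alpha=\nabla_k h^\alpha_{ij}-\tfrac13(\nabla_k h^\alpha_{ij}+\nabla_i h^\alpha_{jk}+\nabla_j h^\alpha_{ik})$; by Codazzi each of the three differences $\nabla_k h^\alpha_{ij}-\nabla_j h^\alpha_{ik}$ etc.\ equals a component of $\bar R$, so $E$ is a linear combination of ambient curvature components and hence $|E|^2\le c(n,d)(K_1+K_2)^2$ via Berger's inequality quoted at the end of Section~2 (this is where the $w=\sum\bar R_{\alpha jij}e_i\otimes\omega_\alpha$ term and the constant $C(n,d)=\frac{n^4 d}{2(n-1)(2n+1)}$ enter). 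Writing $T^\alpha_{kij}$ for the symmetric part, we have $\nabla_k h^\alpha_{ij}=T^\alpha_{kij}+E^\alpha_{kij}$, $|\nabla A|^2=|T|^2+|E|^2+2\langle T,E\rangle$, and $\nabla H$ is a trace of $T$ alone (the trace of $E$ contributes only $w$).

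Next I would establish the clean symmetric-tensor inequality: for a totally symmetric $3$-tensor $T$ in dimension $n$ with trace vector $v_i=\sum_k T_{kii}$ (for each normal index $\alpha$, suppressed), one has $|T|^2\ge \tfrac{3}{n+2}|v|^2$. This is the standard computation — decompose $T=\mathring T+\text{(trace part built from }v)$ where the trace part has squared norm exactly $\tfrac{3}{n+2}|v|^2$ and is orthogonal to the tracefree symmetric part $\mathring T$. Combining this with the decomposition above and absorbing the cross term $2\langle T,E\rangle$ using Cauchy–Schwarz with weight $\eta$ (i.e.\ $2\langle T,E\rangle\ge -\eta|T|^2-\eta^{-1}|E|^2$, or more carefully isolating only the relevant contractions so that $|E|^2$ is replaced by the sharper quantity $\tfrac{2}{n+2}|w|^2$) yields \eqref{gradient-ineq-1}. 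The bookkeeping of exactly which contractions of $E$ survive — so that the coefficient is $\tfrac{2}{n+2}\big(\tfrac{2}{n+2}\eta^{-1}-\tfrac{n}{n-1}\big)$ rather than something coarser — is the fussiest part, but it is routine once the symmetric decomposition is in hand.

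For \eqref{gradient-ineq-2}, the first inequality is again a pointwise linear-algebra fact: $|\nabla A|^2-\tfrac1n|\nabla H|^2\ge \tfrac{n-1}{2n+1}|\nabla A|^2-\tfrac{2n}{(n-1)(2n+1)}|w|^2$. I would derive it either directly, or as the $\eta\to$(optimal) specialization of \eqref{gradient-ineq-1}: rearranging \eqref{gradient-ineq-1} gives $|\nabla A|^2-\tfrac1n|\nabla H|^2\ge \big(1-\tfrac1n(\tfrac{3}{n+2}-\eta)^{-1}\big)\cdot(\dots)$ — actually it is cleaner to just bound $|\nabla H|^2\le \big(\tfrac{3}{n+2}-\eta\big)^{-1}\big(|\nabla A|^2+\tfrac{2}{n+2}(\tfrac{2}{n+2}\eta^{-1}-\tfrac{n}{n-1})|w|^2\big)$ from \eqref{gradient-ineq-1} and choose $\eta$ to make the $|\nabla A|^2$ coefficient come out to $\tfrac{n-1}{2n+1}$, which forces $\tfrac1n(\tfrac{3}{n+2}-\eta)^{-1}=\tfrac{n+2}{2n+1}$, i.e.\ $\tfrac{3}{n+2}-\eta=\tfrac{n(n+2)}{\cdots}$; tracking this pins down $\eta$ and the $|w|^2$ coefficient. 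The second inequality in \eqref{gradient-ineq-2} is then immediate: $|w|^2=\sum_{i}\big(\sum_{j,\alpha}\bar R_{\alpha jij}\big)^2$ has at most $n^3 d$ summands (over $i,j,\alpha$ after expanding the square appropriately), each bounded by $\tfrac12(K_1+K_2)$ in absolute value by Berger, giving $|w|^2\le \tfrac{n^3 d}{4}(K_1+K_2)^2$ up to the precise count, and multiplying by $\tfrac{2n}{(n-1)(2n+1)}$ produces the stated $C(n,d)$.

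The main obstacle I anticipate is getting the constants exactly right in \eqref{gradient-ineq-1} — in particular, not overcounting the ambient-curvature contributions when splitting off the symmetric part, so that only the trace $w$ (and not the full $|E|^2$) appears in the sharp form. The cleanest route is to avoid a blunt Cauchy–Schwarz on all of $\langle T,E\rangle$ and instead observe that the only piece of $E$ that fails to be orthogonal to the trace-carrying part of $T$ is itself a trace of ambient curvature, namely $w$; everything else either cancels by the symmetry of $T$ or is harmless. Once that orthogonality is exploited, \eqref{gradient-ineq-1} and hence \eqref{gradient-ineq-2} drop out by elementary optimization in $\eta$.
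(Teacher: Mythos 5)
Your strategy is genuinely different from the paper's, and as described it does not reproduce the stated constants. The paper does not symmetrize $\nabla A$; it writes down a specific tensor
$E_{ijk}=\tfrac{1}{n+2}(\nabla_iH g_{jk}+\nabla_jH g_{ik}+\nabla_kH g_{ij})-\tfrac{2}{(n+2)(n-1)}w_ig_{jk}+\tfrac{n}{(n+2)(n-1)}(w_jg_{ik}+w_kg_{ij})$,
checks via Codazzi that $\langle E,\nabla A-E\rangle=0$, and computes exactly
$|E|^2=\tfrac{3}{n+2}|\nabla H|^2+\tfrac{2n}{(n+2)(n-1)}|w|^2+\tfrac{4}{n+2}\langle\nabla H,w\rangle$,
after which Young's inequality on the cross term gives \eqref{gradient-ineq-1}. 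In effect the paper projects $\nabla A$ onto the subspace of tensors of the form $a_ig_{jk}+b_jg_{ik}+b_kg_{ij}$ (respecting only the $j\leftrightarrow k$ symmetry), which is strictly larger than the totally symmetric trace part you use. Your route — split off the full symmetrization $T$, note $|\nabla A|^2\ge|T|^2$ (the cross term $\langle T,E\rangle$ is identically zero, by the way, since symmetrization is an orthogonal projection; no Cauchy--Schwarz is needed there), then apply $|T|^2\ge\tfrac{3}{n+2}|v|^2$ to the trace $v$ of $T$ — is sound, but the trace of $T$ is $v=\nabla H-\tfrac{2}{3}w$ (not $\nabla H$ alone, as you assert), so you obtain
$|\nabla A|^2\ge\tfrac{3}{n+2}\bigl(|\nabla H|^2-\tfrac{4}{3}\langle\nabla H,w\rangle+\tfrac{4}{9}|w|^2\bigr)$,
whose standalone $|w|^2$ coefficient is $\tfrac{4}{3(n+2)}$, strictly smaller than the paper's $\tfrac{2n}{(n+2)(n-1)}$ for every $n\ge2$.

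Concretely, your argument yields \eqref{gradient-ineq-1} with $\tfrac{2}{3}$ in place of $\tfrac{n}{n-1}$, and hence the first line of \eqref{gradient-ineq-2} with $\tfrac{4}{3(n-1)}$ in place of $\tfrac{2n}{(n-1)(2n+1)}$; the discarded remainder $|E|^2=\tfrac19\sum(\bar R_{\alpha jik}+\bar R_{\alpha ijk})^2$ involves all curvature components, not just the trace $w$, and cannot be bounded below by $|w|^2$ sharply enough to close this deficit. You flag exactly this bookkeeping as the ``fussiest part'' but do not resolve it, and within the symmetrization framework it cannot be resolved — you must project onto the larger subspace as the paper does. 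This is a real shortfall relative to the lemma as stated, though a benign one for the paper as a whole: the coefficient $\tfrac{n-1}{2n+1}$ of $|\nabla A|^2$ comes out the same either way, and the weaker $|w|^2$ constant would only enlarge $C(n,d)$ and hence $b_0$. Your remaining steps are correct: choosing $\eta$ so that the $|\nabla A|^2$ coefficient equals $\tfrac{n-1}{2n+1}$ forces $\eta=\tfrac{n-1}{n(n+2)}$, exactly as in the paper, and your count $|w|^2\le\tfrac{n^3d}{4}(K_1+K_2)^2$ via Berger's inequality does recover $C(n,d)=\tfrac{n^4d}{2(n-1)(2n+1)}$ from the paper's $|w|^2$ coefficient.
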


\begin{proof}Inequality (\ref{gradient-ineq-2}) follows from (\ref{gradient-ineq-1}) with
$\eta=\frac{n-1}{n(n+2)}$. To prove (\ref{gradient-ineq-1}), we set
\begin{equation*}
\begin{split}
E_{ijk}=&\frac{1}{n+2}(\nabla_i Hg_{jk}+\nabla_jH g_{ik}+\nabla_k H
g_{ij})\\
&-\frac{2}{(n+2)(n-1)}w_ig_{jk}+\frac{n}{(n+2)(n-1)}( w_j
g_{ik}+w_kg_{ij} ).
\end{split}
\end{equation*}
Let $F_{ijk}=\nabla_i h_{jk}-E_{ijk}.$  By the Codazzi equation we
have $\langle E_{ijk}, F_{ijk}\rangle=0$. Hence $|\nabla A|^2\geq
|E|^2$. By a direct computation, we have
\begin{equation*}
|E|^2=\frac{3}{n+2}|\nabla
H|^2+\frac{2n}{(n+2)(n-1)}|w|^2+\frac{4}{n+2}\langle \nabla
H,w\rangle.
\end{equation*}
Then (\ref{gradient-ineq-1}) follows from Schwartz's inequality,
Young's inequality and Berger's inequality.
\end{proof}

\begin{theorem}\label{pres-pinching}There is a positive constant $b_1$ depending on
 $n$, $d$, $K_1$, $K_2$, $L$  and $a$ such that if  $|A|^2\leq a|H|^2-b$
 holds for some constant $a\leq \frac{4}{3n}$ and $b> b_1$ at $t=0$, then it
 remains true for $t>0$.
\end{theorem}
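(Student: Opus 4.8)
The plan is to use the maximum principle for systems (in the style of Hamilton and Huisken) applied to the function $f = |A|^2 - a|H|^2$, aiming to show that $f + b$ stays negative, i.e. that the inequality $|A|^2 \leq a|H|^2 - b$ is preserved. First I would combine the evolution equations \eqref{evu-|A|^2} and \eqref{evu-|H|^2} from Lemma \ref{lem-evolution} to obtain
\begin{equation*}
\frac{\partial}{\partial t} f = \Delta f - 2\left(|\nabla A|^2 - a|\nabla H|^2\right) + 2\left(R_1 - a R_2\right) + (\text{ambient curvature terms}) + (\text{derivative-of-curvature terms}),
\end{equation*}
where $R_1 = \sum_{\alpha,\beta}(\sum h^\alpha_{ij}h^\beta_{ij})^2 + \sum(\sum_p(h^\alpha_{ip}h^\beta_{jp} - h^\alpha_{jp}h^\beta_{ip}))^2$ and $R_2 = \sum_{i,j}(\sum_\alpha H^\alpha h^\alpha_{ij})^2$. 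The first task is to control the reaction terms $R_1 - a R_2$: one shows, exactly as in Andrews--Baker and the space-form case, that at a point where $|A|^2 = a|H|^2$ the quantity $R_1 - a R_2$ is bounded above by a multiple of $|H|^2 |\mathring{A}|^2$ or similar, using the algebraic inequality $|\mathring{A}|^2 \leq (a - 1/n)|H|^2$ available there; this is what forces the restriction $a \leq \frac{4}{3n}$ (for small $n$ one needs the sharper estimate), but this part is essentially known.

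The genuinely new feature is the ambient-space contributions. The curvature terms in \eqref{evu-|A|^2} and \eqref{evu-|H|^2} involving $\bar R$ are each bounded, using Berger's inequality $|\bar R_{ACBC}| \leq \frac12(K_1+K_2)$, $|\bar R_{ABCD}| \leq \frac23(K_1+K_2)$, by $C(n,d)(K_1+K_2)(|A|^2 + 1)$ up to an additive constant; the terms with $\bar\nabla\bar R$ are bounded by $C(n,d)\, L\, |A|$ using \eqref{1st-deri-N}, hence by $\varepsilon|A|^2 + C_\varepsilon L^2$ via Young. The key point is that the gradient term $-2(|\nabla A|^2 - a|\nabla H|^2)$ is not merely nonpositive but, by Lemma \ref{lem-gradient-ineq} (specifically \eqref{gradient-ineq-2}, together with the fact that $a \leq \frac{4}{3n} \leq \frac1n$ for $n\geq 2$), dominates a positive multiple of $|\nabla A|^2$ minus the fixed constant $C(n,d)(K_1+K_2)^2$. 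One does not actually need the gradient term for preservation of this particular inequality — the point is rather that once all ambient terms are collected, one gets
\begin{equation*}
\frac{\partial}{\partial t} f \leq \Delta f + \langle \text{lower order}, \nabla f\rangle + \varepsilon |A|^2 + C(n,d)\bigl((K_1+K_2)|A|^2 + (K_1+K_2) + (K_1+K_2)^2 + L^2\bigr)
\end{equation*}
at points where $f = 0$, so on the set $\{f = -b\}$ one has $|A|^2 \leq a|H|^2 - b$ hence $|H|^2 \geq \frac{b}{a}$, and one can absorb the bad constant provided $b$ is large.

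Concretely, I would proceed by contradiction with a first-time argument: let $t_0$ be the first time the open condition $f < -b$ fails, so $f(x_0,t_0) = -b$ at some $x_0$ and $f(\cdot,t_0) \leq -b$ elsewhere; then at $(x_0,t_0)$ we have $\Delta f \leq 0$, $\nabla f = 0$, $\partial_t f \geq 0$. Plugging into the evolution inequality above and using $|A|^2 \geq \frac1n|H|^2 \geq \ldots$ to relate $|H|^2$ and the constant $b$ (from $|A|^2 = a|H|^2 - b$ one gets $|H|^2 = \frac1a(|A|^2 + b) \geq \frac{b}{a}$, and separately $|A|^2 \leq a|H|^2$ so $|A|^2$ is comparable to $|H|^2$), the right-hand side becomes strictly negative once $b$ exceeds an explicit $b_1(n,d,K_1,K_2,L,a)$, contradicting $\partial_t f \geq 0$. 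The main obstacle is the bookkeeping in bounding the reaction terms $R_1 - aR_2$ at the boundary of the pinching cone in a way compatible with all $n \geq 2$ (the constant $\frac{4}{3n}$ versus $\frac{1}{n-1}$ dichotomy traces to exactly this estimate, following Andrews--Baker's Lemma for $R_1$), and ensuring that every ambient error term is genuinely of the form (constant)$\times|A|^2$ plus an additive constant so that it can be dominated by choosing $b$ large rather than by any smallness of the curvature bounds; the choice $a \leq \frac{4}{3n}$ is precisely what is needed to make the curvature-free part of $R_1 - aR_2$ nonpositive on $\{f = 0\}$.
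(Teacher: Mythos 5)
Your strategy coincides with the paper's: apply the maximum principle to $Q=|A|^2-a|H|^2+b$, estimate the reaction terms via the Andrews--Baker algebra, the ambient curvature terms via Berger's inequality, and the $\bar\nabla\bar R$ terms via Young's inequality, then take $b$ large. There is, however, one step that fails as written. In your displayed evolution inequality at the critical point, everything on the right beyond $\Delta f$ and the gradient term is nonnegative, namely $\varepsilon|A|^2+C\bigl((K_1+K_2)|A|^2+(K_1+K_2)+(K_1+K_2)^2+L^2\bigr)$; no choice of $b$ makes this negative. The bound you invoke for the reaction term, $R_1-aR_2\leq -c\,|H|^2|\mathring{A}|^2$ ``or similar,'' does not rescue this: at an umbilic critical point ($|\mathring{A}|=0$, $|H|^2=\frac{b}{a-1/n}$) that bound contributes nothing, while the ambient errors are of size $(K_1+K_2)|H|^2\sim\frac{(K_1+K_2)b}{a-1/n}$ plus the constant $\sim L^2$. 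What actually closes the argument --- and what the paper's inequality \eqref{second-line} records --- is that substituting the critical-point relation $|H|^2=\frac{|\mathring{A}|^2+b}{a-\frac1n}$ into the Andrews--Baker estimate yields the quantitatively negative terms $-\frac{2nab}{n(a-\frac1n)}|\mathring{A}|_H^2-\frac{4b}{n(a-\frac1n)}|\mathring{A}|_I^2-\frac{2b^2}{n(a-\frac1n)}$, whose coefficients grow in $b$: the $b|\mathring{A}|^2$ terms beat the $b$-independent coefficients $C_1,C_2$ on $|\mathring{A}|^2$, and the $-b^2$ term beats the $b$-linear term coming from $\bar R_{k\alpha k\beta}H^\alpha H^\beta$ and the additive constant $C_4$. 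Describing the reaction term only as ``a multiple of $|H|^2|\mathring{A}|^2$'' discards exactly the $-b|\mathring{A}|^2$ and $-b^2$ contributions on which the whole absorption rests; you must carry them explicitly.

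Two smaller corrections. First, $\frac{4}{3n}>\frac1n$, so your stated justification for handling the gradient term is wrong as written; since $a$ may exceed $\frac1n$, the term $-2(|\nabla A|^2-a|\nabla H|^2)$ is not automatically nonpositive, and one genuinely needs \eqref{gradient-ineq-1} together with $a\le\frac{4}{3n}<\frac{3}{n+2}$ for $n\ge2$ to conclude that this term is bounded above by a constant depending only on $n,d,K_1,K_2$ (which is then absorbed into $C_4$ and hence into $b_1$). Second, your display is stated ``at points where $f=0$'' but then applied on $\{f=-b\}$; the reaction estimate must be derived at the actual critical point $|A|^2=a|H|^2-b$, since it is precisely the gap $b$ between $|A|^2$ and $a|H|^2$ there that generates the good terms above.
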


\begin{proof}
Set $Q=|A|^2-a|H|^2+b$, where $a\leq \frac{4}{3n}$, $b> b_1$, and
$b_1$ is a positive constant to be determined. We will compute the
evolution of $Q$ along the mean curvature flow, and show that if
$Q=0$ at a point in the space-time, then $(\frac{\partial}{\partial
t}-\Delta) Q$ is negative at this point. By the maximum principle,
the theorem follows.

By Lemma \ref{lem-evolution}, we have
\begin{equation}\label{evo-Q}
\begin{split}
\frac{\partial}{\partial t}Q=&\Delta Q-2(|\nabla A|^2-a|\nabla
H|^2)+2R_1-2aR_2+{\rm P}_{a},
\end{split}\end{equation}
where
\begin{equation*}
R_1=\sum_{\alpha,\beta}\Big(\sum_{i,j}h^\alpha_{ij}h^{\beta}_{ij}\Big)^2+\sum_{i,j,\alpha,\beta}\Big[\sum_{p}\Big(h_{ip}^\alpha
h_{jp}^\beta-h_{jp}^\alpha h_{ip}^\beta\Big)\Big]^2,
\end{equation*}
\begin{equation*}
R_2=\sum_{i,j}\Big(\sum_{\alpha}H^{\alpha}h^\alpha_{ij}\Big)^2,
\end{equation*}
and  ${\rm P}_a=I+II+III+IV$ with
\begin{equation*}
I=4\sum_{i,j,p,q}\bar{R}_{ipjq}\Big(\sum_{\alpha} h_{pq}^\alpha
h_{ij}^\alpha\Big)-4\sum_{j,k,p}\bar{R}_{kjkp}\Big(\sum_{i,\alpha}h^\alpha_{pi}h^\alpha_{ij}\Big),
\end{equation*}
\begin{equation*}
II=2\sum_{k,\alpha,\beta}\bar{R}_{k\alpha
k\beta}\Big(\sum_{i,j}h_{ij}^\alpha
h_{ij}^\beta\Big)-2a\sum_{k,\alpha,\beta}\bar{R}_{k\alpha
k\beta}H^\alpha H^\beta,\end{equation*}
\begin{equation*}
III=-8\sum_{j,p,\alpha,\beta}\bar{R}_{jp\alpha
\beta}\Big(\sum_{i}h_{ip}^\alpha h_{ij}^\beta\Big),\end{equation*}
\begin{equation*}
IV=2\sum_{i,j,k,\beta}\bar{\nabla}_k
\bar{R}_{kij\beta}h_{ij}^\beta-2\sum_{i,j,k,\beta}\bar{\nabla}_i
\bar{R}_{jkk\beta}h_{ij}^\beta.
\end{equation*}

At the point  where $Q=0$, the mean curvature vector is not zero.
Hence we choose $e_{n+1}=\frac{H}{|H|}$. The second fundamental form
can be written as $A=\sum_{\alpha}h^\alpha e_{\alpha}$, where
$h^\alpha$, $n+1\leq \alpha \leq n+d$, are symmetric 2-tensors. By
the choice of $e_{n+1}$, we see that $H^{n+1}={\rm tr} h^{n+1}=|H|$
and $H^\alpha={\rm tr} h^\alpha=0$ for $\alpha\geq n+2$. The
tracefree second fundamental form may be  rewritten as
$\mathring{A}=\sum_{\alpha}\mathring{h}^\alpha e_{\alpha}$, where
$\mathring{h}^{n+1}={h}^{n+1}-\frac{|H|}{n}{\rm Id}$ and
$\mathring{h}^\alpha={h}^\alpha$ for $\alpha\geq n+2$.  We set
$$|A|_H^2=|h^{n+1}|^2,\ \ |A|_I^2=\sum_{\alpha\geq
n+2}|h^{\alpha}|^2=|A|^2-|A|_H^2,$$
$$|\mathring{A}|_H^2=|\mathring{h}^{n+1}|^2,\ \ |\mathring{A}|_I^2=\sum_{\alpha\geq
n+2}|\mathring{h}^{\alpha}|^2=|\mathring{A}|^2-|\mathring{A}|_H^2.$$
Notice that $|A|_H^2=|\mathring{A}|_H^2+\frac{|H|^2}{n}$ and
$|A|_I^2=|\mathring{A}|_I^2$.

Since $Q=0$ at this point, we have
$|H|^2=\frac{|\mathring{A}|^2+b}{a-\frac{1}{n}}$. By the computation
in \cite{Andrews-Baker} we have
\begin{equation}\label{second-line}
\begin{split}
2R_1-2aR_2\leq&\bigg(6-\frac{2}{n(a-\frac{1}{n})}\bigg)|\mathring{A}|_H^2|\mathring{A}|_I^2
+\bigg(3-\frac{2}{n(a-\frac{1}{n})}\bigg)|\mathring{A}|_I^4\\
&-\frac{2nab}{n(a-\frac{1}{n})}|\mathring{A}|_H^2-\frac{4b}{n(a-\frac{1}{n})}|\mathring{A}|_I^2-\frac{2b^2}{n(a-\frac{1}{n})}.
\end{split}\end{equation}

To estimate $I$, we fix   $\alpha$ and choose $e_i$'s such that
$h^{\alpha}_{ij}=\lambda^\alpha_i \delta_{ij}$. Then
\begin{equation*}\begin{split}
&4\sum_{i,j,p,q}\bar{R}_{ipjq}h_{pq}^\alpha
h_{ij}^\alpha-4\sum_{j,k,p}\bar{R}_{kjkp}\Big(\sum_{i}h^\alpha_{pi}h^\alpha_{ij}\Big)\\
&=4\sum_{i,p}\bar{R}_{ipip}\Big(\lambda^\alpha_i\lambda^\alpha_p-(\lambda^\alpha_i)^2\Big)\\
&=-2\sum_{i,p}\bar{R}_{ipip}\Big(\lambda^\alpha_i-\lambda^\alpha_p\Big)^2\\
&\leq 4nK_1|\mathring{h}^\alpha|^2.
\end{split}
\end{equation*}
Hence we get
\begin{equation}\label{I-estimate}
I\leq 4nK_1(|\mathring{A}|_H^2+|\mathring{A}|_I^2).
\end{equation}

By the choice of $e_{n+1}$, we have
\begin{equation*}
II=II_1+II_2+II_3,
\end{equation*}
where
\begin{equation*}
II_1=2\sum_{i,j,k}\bar{R}_{k n+1
kn+1}(h_{ij}^{n+1})^2-2a\sum_{k}\bar{R}_{k n+1 k n+1} (H^{n+1})^2,
\end{equation*}
\begin{equation*}
II_2=4\sum_{k,\alpha\geq n+2}\bar{R}_{k\alpha
kn+1}\Big(\sum_{i,j}h_{ij}^\alpha
h_{ij}^{n+1}\Big)-4a\sum_{k,\alpha\geq n+2}\bar{R}_{k\alpha kn+1}
H^{n+1} H^{\alpha},
\end{equation*}
\begin{equation*}
II_3=2\sum_{k,\alpha,\beta\geq n+2}\bar{R}_{k\alpha
k\beta}\Big(\sum_{i,j}h_{ij}^\alpha
h_{ij}^\beta\Big)-2a\sum_{k,\alpha,\beta\geq n+2}\bar{R}_{k\alpha
k\beta}H^\alpha H^\beta.
\end{equation*}

Since $|H|^2=\frac{|\mathring{A}|^2+b}{a-\frac{1}{n}}$ at that
point, we have
\begin{equation*}\begin{split}
II_1\leq& 2nK_2|A|_H^2+2na K_1|H|^2\\
=&2nK_2|\mathring{A}|_H^2+2(naK_1+K_2)\cdot
\frac{|\mathring{A}|^2+b}{a-\frac{1}{n}}\\
=& \bigg(2nK_2+
\frac{2(naK_1+K_2)}{a-\frac{1}{n}}\bigg)|\mathring{A}|_H^2 +
\frac{2(naK_1+K_2)}{a-\frac{1}{n}}|\mathring{A}|_I^2+\frac{2(naK_1+K_2)b}{a-\frac{1}{n}}.
\end{split}\end{equation*}

Since $H^\alpha=0$ for $\alpha\geq n+2$, we have the following
estimates for $II_2$ and $II_3$.
\begin{equation*}\begin{split}
II_2=&4\sum_{k,\alpha\geq n+2}\bar{R}_{k\alpha
kn+1}\Big(\sum_{i,j}h_{ij}^\alpha h_{ij}^{n+1}\Big)\\
=&4\sum_{k,\alpha\geq n+2}\bar{R}_{k\alpha
kn+1}\Big(\sum_{i,j}\mathring{h}_{ij}^\alpha \mathring{h}_{ij}^{n+1}\Big)\\
\leq&(K_1+K_2)\sum_{k,\alpha\geq n+2}\Big(
\frac{1}{\varrho}\sum_{i,j}(\mathring{h}_{ij}^\alpha)^2
+\varrho\sum_{i,j}(\mathring{h}_{ij}^{n+1})^2\Big)\\
=&{\varrho}n(d-1)(K_1+K_2)|\mathring{A}|_H^2+\frac{n}{\varrho}(K_1+K_2)|\mathring{A}|_I^2,
\end{split}\end{equation*}
for any positive constant $\varrho$.
\begin{equation*}\begin{split}
II_3=&2\sum_{k,\alpha,\beta\geq n+2}\bar{R}_{k\alpha
k\beta}\Big(\sum_{i,j}h_{ij}^\alpha h_{ij}^\beta\Big)\\
=&2\sum_{k,\alpha\geq n+2}\bar{R}_{k\alpha
k\alpha}\Big(\sum_{i,j}h_{ij}^\alpha)^2\Big)+2\sum_{k,\alpha,\beta\geq
n+2,\alpha\neq\beta}\bar{R}_{k\alpha
k\beta}\Big(\sum_{i,j}h_{ij}^\alpha h_{ij}^\beta\Big)\\
\leq&2nK_2|\mathring{A}|_I^2+2\sum_{k,\alpha,\beta\geq
n+2,\alpha\neq\beta}\bar{R}_{k\alpha
k\beta}\Big(\sum_{i,j}h_{ij}^\alpha h_{ij}^\beta\Big)\\
\leq&2nK_2|\mathring{A}|_I^2+\sum_{k,\alpha,\beta\geq
n+2,\alpha\neq\beta}|\bar{R}_{k\alpha
k\beta}|\Big(\sum_{i,j}(h_{ij}^\alpha)^2+ (h_{ij}^\beta)^2\Big)\\
\leq&2nK_2|\mathring{A}|_I^2+(K_1+K_2)\sum_{i,j,k,\alpha,\beta\geq
n+2,\alpha\neq\beta}(h_{ij}^\alpha)^2\\
=&2nK_2|\mathring{A}|_I^2+n(d-2)(K_1+K_2)|\mathring{A}|_H^2.
\end{split}\end{equation*}

Hence we get the following estimate for $II$.
\begin{equation}\label{II-estimate}\begin{split}
II\leq &\bigg(  2nK_2+
\frac{2(naK_1+K_2)}{a-\frac{1}{n}}+[{\varrho}n(d-1)+n(d-2)](K_1+K_2)
\bigg) |\mathring{A}|_H^2\\
&+\bigg( \frac{2(naK_1+K_2)}{a-\frac{1}{n}}+
\frac{n}{\varrho}(K_1+K_2) +2nK_2  \bigg) |\mathring{A}|_I^2\\
&+\frac{2(naK_1+K_2)b}{a-\frac{1}{n}}.
\end{split}\end{equation}

For $III$, we have
\begin{equation*}
III=III_1+III_2,
\end{equation*}
where
\begin{equation*}
III_1=-16\sum_{j,p,\alpha\geq n+2}\bar{R}_{jp\alpha
n+1}\Big(\sum_{i}h_{ip}^\alpha h_{ij}^{n+1}\Big), \end{equation*}
\begin{equation*}
III_2=-8\sum_{j,p,\alpha,\beta\geq
n+2,\alpha\neq\beta}\bar{R}_{jp\alpha\beta}\Big(\sum_{i}h_{ip}^\alpha
h_{ij}^\beta\Big). \end{equation*}

We have the following estimates for arbitrary positive constant
$\rho$.
\begin{equation*}\begin{split}
III_1=&-16\sum_{j,p,\alpha\geq n+2}\bar{R}_{jp\alpha
n+1}\Big(\sum_{i}\mathring{h}_{ip}^\alpha \Big(
\mathring{h}_{ij}^{n+1}+\frac{|H|}{n}\delta_{ij}\Big)\Big)\\
=&-16\sum_{j\neq p,\alpha\geq n+2}\bar{R}_{jp\alpha
n+1}\Big(\sum_{i}\mathring{h}_{ip}^\alpha
\mathring{h}_{ij}^{n+1}\Big)\\
\leq&\frac{16}{3}(K_1+K_2)\sum_{j\neq p,i,\alpha\geq
n+2}\Big(\frac{1}{\rho}(\mathring{h}_{ip}^\alpha)^2+\rho
(\mathring{h}_{ij}^{n+1})^2\Big)\\
=&\frac{16}{3}\rho(n-1)(d-1)(K_1+K_2)|\mathring{A}|_H^2+\frac{16}{3\rho}(n-1)(K_1+K_2)|\mathring{A}|_I^2.
\end{split}\end{equation*}
Here for the second equality, we use the fact that
$\sum_{j,p}\bar{R}_{jp\alpha n+1}\mathring{h}^\alpha_{jp}=0$ since
$\bar{R}_{jp\alpha n+1}$ is anti-symmetric for $j,p$ and
$\mathring{h}^\alpha_{jp}$ is symmetric for $j,p$.

For any fixed $\beta\geq n+2$, we choose $e_i$'s such that
$\mathring{h}^\beta_{ij}=\mathring{\lambda}^\beta_{i}\delta_{ij}$.
Then
\begin{equation*}
\begin{split}
III_2=&-8 \sum_{\beta\geq n+2} \sum_{j\neq p,\alpha\geq
n+2,\alpha\neq\beta}\bar{R}_{jp\alpha\beta} \mathring{h}_{jp}^\alpha
\mathring{\lambda}_{j}^\beta\\
\leq&\frac{8}{3}(K_1+K_2)\sum_{\beta\geq
n+2}\bigg((n-1)^{\frac{1}{2}} \sum_{j\neq p,\alpha\geq
n+2,\alpha\neq\beta}
(\mathring{h}_{jp}^\alpha)^2+\frac{1}{(n-1)^{\frac{1}{2}}}
\sum_{j\neq p,\alpha\geq
n+2,\alpha\neq\beta}(\mathring{\lambda}_{j}^\beta)^2 \bigg)\\
\leq&\frac{8}{3}(K_1+K_2)\bigg(
(n-1)^{\frac{1}{2}}(d-2)|\mathring{A}|_I^2 +
\sum_{\beta\geq n+2} (n-1)^{\frac{1}{2}}(d-2)|\mathring{h}^\beta|^2 \bigg)\\
=& \frac{8}{3}(n-1)^{\frac{1}{2}}(d-2)(K_1+K_2)|\mathring{A}|_I^2.
\end{split}
\end{equation*}

Hence we have
\begin{equation}\label{III-estimate}
\begin{split}
III\leq&\frac{16}{3}\rho(n-1)(d-1)(K_1+K_2)|\mathring{A}|_H^2\\
&+ \Big(\frac{16}{3\rho}(n-1)+
\frac{8}{3}(n-1)^{\frac{1}{2}}(d-2)\Big)(K_1+K_2)|\mathring{A}|_I^2.
\end{split}
\end{equation}

For $IV$, we choose $e_i$'s such that $h^{n+1}_{ij}=\lambda_i
\delta_{ij} $. If $K_1+K_2\neq0$, we have
\begin{equation}\label{IV-estimate}
\begin{split}
IV=&2\sum_{i,k}\bar{\nabla}_k
\bar{R}_{kiin+1}(\lambda_i-\lambda_k)-2\sum_{i,j,k,\beta\geq
n+2}(\bar{\nabla}_k \bar{R}_{kij\beta}-\bar{\nabla}_i
\bar{R}_{jkk\beta}) \mathring{h}_{ij}^\beta\\
\leq&\sum_{i,k}\Big( \frac{1}{\theta} (\bar{\nabla}_k
\bar{R}_{kiin+1})^2 +\theta (\lambda_i-\lambda_k)^2\Big)\\
&+\sum_{i,j,k,\beta\geq n+2}\Big(
\frac{2}{\vartheta}[(\bar{\nabla}_k
\bar{R}_{kij\beta})^2+(\bar{\nabla}_i \bar{R}_{jkk\beta})^2]+
\vartheta (\mathring{h}_{ij}^\beta)^2\Big)\\
\leq&\frac{L^2}{\theta}+\theta|\mathring{A}|_H^2+\frac{4L^2}{\vartheta}+n\vartheta|\mathring{A}|_I^2,
\end{split}\end{equation}
for positive constants $\theta,\vartheta$. If $K_1+K_2=0$, then
$L=0$, and we may choose $\theta,\vartheta=0$.

Combining (\ref{I-estimate})-(\ref{IV-estimate}), we have
\begin{equation}\label{14}
\begin{split} \Big(\frac{\partial}{\partial t}-\Delta\Big) Q\leq &\bigg(6-\frac{2}{n(a-\frac{1}{n})}\bigg)|\mathring{A}|_H^2|\mathring{A}|_I^2
+\bigg(3-\frac{2}{n(a-\frac{1}{n})}\bigg)|\mathring{A}|_I^4\\
&-\frac{2nab}{n(a-\frac{1}{n})}|\mathring{A}|_H^2-\frac{4b}{n(a-\frac{1}{n})}|\mathring{A}|_I^2-\frac{2b^2}{n(a-\frac{1}{n})}\\
&+C_1|\mathring{A}|_H^2+C_2|\mathring{A}|_I^2+C_3b+C_4.
\end{split}\end{equation}

Here
\begin{equation*}
\begin{split}
C_1=&4nK_1+  2nK_2+
\frac{2(naK_1+K_2)}{a-\frac{1}{n}}\\
&+\Big[{\varrho}n(d-1)+n(d-2)+\frac{16}{3}\rho(n-1)(d-1)\Big](K_1+K_2)+\theta,\\
C_2=&4nK_1+2nK_2+ \frac{2(naK_1+K_2)}{a-\frac{1}{n}}\\
&+ \Big(\frac{n}{\varrho}+\frac{16}{3\rho}(n-1)+
\frac{8}{3}(n-1)^{\frac{1}{2}}(d-2)\Big)(K_1+K_2)+n\vartheta,\\
C_3=&\frac{2(naK_1+K_2)}{a-\frac{1}{n}},\\
C_4=&\frac{L^2}{\theta}+\frac{4L^2}{\vartheta} \ \textrm{for}\
K_1+K_2\neq0\ \textrm{ and}\ C_4=0\  \textrm{for}\  K_1+K_2=0.
\end{split}\end{equation*}

If $K_1+K_2\neq0$, set
$b_1=\max\Big\{\frac{C_1}{2a}(a-\frac{1}{n}),\frac{C_2}{4}n(a-\frac{1}{n}),
\frac{1}{4}n(a-\frac{1}{n})\Big(C_3+\sqrt{C_3^2+\frac{8C_4}{n(a-\frac{1}{n})}}\Big)
\Big\}$ with   $\varrho=\rho=\theta=\vartheta=1$. If $K_1+K_2=0$,
set $b_1=0$. So if $b>b_1$,  we have $\Big(\frac{\partial}{\partial
t}-\Delta\Big) Q<0$. Then by the maximum principle, $|A|^2\leq
a|H|^2-b$ is preserved along the mean curvature flow.
\end{proof}

\begin{remark}
When $K_1+K_2\neq0$, we may get a better $b_1$ by choosing suitable
positive constants $\varrho,\ \rho,\ \theta$ and $\vartheta$.
\end{remark}

Now we pick the constant $b_0$ in (\ref{pinch-cond}) such that
$b_0\geq b_1$. Since the submanifold is compact, if
(\ref{pinch-cond}) is satisfied, then there are positive constants
$a_\varepsilon<a$ and $b_\varepsilon>b_1$, where $a$ denotes the
cofficient of $|H|^2$, such that $|A|^2\leq
a_\varepsilon|H|^2-b_\varepsilon$ holds at $t=0$, and it is
preserved along the mean curvature flow by Theorem
\ref{pres-pinching}. Hence in the remained part of the paper, we
always assume that $a_\varepsilon<a$, $b_\varepsilon>b_1$ and omit
the index $\varepsilon$.

\section{A pinching estimate for the tracefree second fundamental form}
In this section, we assume that at the initial time the submanifold
satisfies the pinching condition $|A|^2\leq a|H|^2-b$ for positive
constants $a,\ b$ such that  $a<\frac{4}{3n}$ when $n=2, 3$  and
 $a<\frac{1}{n-1}$ when $n\geq 4$, and  $b> b_1$, where $b_1$ is as
in Theorem \ref{pres-pinching}. From the last paragraph of Section 3
we see that the positive constants $a,\ b$ do exist under the
condition (\ref{pinch-cond}) and  the pinching condition is
preserved along the mean curvature. We prove a pinching estimate for
the tracefree second fundamental form, which guarantees that $M_t$
becomes totally umbilical along the mean curvature flow.
\begin{theorem}\label{thm-pinching-|A0|^2}
There are  constants $C_0<\infty$ and $\delta>0$ depending only on
$M_0$ such that along the mean curvature flow there holds
\begin{equation*}
|\mathring{A}|^2\leq C_0 |H|^{2-\delta}.
\end{equation*}
\end{theorem}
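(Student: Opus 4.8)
The plan is to follow Huisken's strategy for proving the analogous pinching estimate \cite{H3}, suitably adapted to higher codimension as in \cite{Andrews-Baker}. Consider the quantity
\[
f_\sigma = \frac{|\mathring{A}|^2}{|H|^{2-\sigma}}
\]
for a small exponent $\sigma > 0$ to be determined. The aim is to show that $(\frac{\partial}{\partial t} - \Delta) f_\sigma \leq 0$ wherever $f_\sigma$ is large, so that by the maximum principle $f_\sigma$ stays bounded by its initial value $C_0 = \max_{M_0} f_\sigma$, which is finite because $M_0$ is compact and has $|H|^2$ bounded below (the pinching condition forces $|H|^2 \geq (a - \tfrac1n)^{-1}(|\mathring A|^2 + b) \geq \text{const} > 0$). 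The key algebraic input is the reaction-term estimate already used in Theorem \ref{pres-pinching}: under the preserved pinching $|A|^2 \leq a|H|^2 - b$ with $a < \tfrac{4}{3n}$ (or $\tfrac{1}{n-1}$ for $n \geq 4$), one gets a strict gap, i.e.\ the zeroth-order reaction terms governing $\frac{\partial}{\partial t}|\mathring A|^2$ are bounded by $(\text{const}) \cdot |\mathring A|^2 |H|^2 - \varepsilon_0 |\mathring A|^4 + (\text{lower order in } K_1, K_2, L)$ for some $\varepsilon_0 > 0$, and this is what beats the bad gradient terms.

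First I would record the evolution equation for $|\mathring A|^2$, obtained by subtracting $\frac1n \frac{\partial}{\partial t}|H|^2$ (equation \eqref{evu-|H|^2}) from $\frac{\partial}{\partial t}|A|^2$ (equation \eqref{evu-|A|^2}); it has the form
\[
\Big(\frac{\partial}{\partial t} - \Delta\Big)|\mathring A|^2 = -2\Big(|\nabla A|^2 - \tfrac1n |\nabla H|^2\Big) + 2\tilde R_1 + \tilde P,
\]
where $\tilde R_1$ collects the curvature-squared reaction terms and $\tilde P$ the ambient-curvature terms estimated in Section 3. Then I would compute $(\frac{\partial}{\partial t} - \Delta) f_\sigma$ using the quotient rule: the time derivative produces $\frac{1}{|H|^{2-\sigma}}(\frac{\partial}{\partial t} - \Delta)|\mathring A|^2 - \frac{(2-\sigma)|\mathring A|^2}{|H|^{4-\sigma}}(\frac{\partial}{\partial t} - \Delta)|H|^2$, plus the Bochner-type cross term $\frac{2(2-\sigma)}{|H|} \langle \nabla |H|, \nabla f_\sigma\rangle$ coming from differentiating the denominator twice, plus a gradient term $-(2-\sigma)(1-\sigma)\frac{|\mathring A|^2 |\nabla |H||^2}{|H|^{4-\sigma}}$ with a favorable sign for $\sigma < 1$. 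The cross term is harmless since it vanishes at an interior maximum of $f_\sigma$.

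The crux is controlling the combination of gradient terms
\[
-\frac{2}{|H|^{2-\sigma}}\Big(|\nabla A|^2 - \tfrac1n|\nabla H|^2\Big) + \frac{2(2-\sigma)|\mathring A|^2}{|H|^{4-\sigma}}|\nabla H|^2
\]
(the last piece coming from $-\frac{(2-\sigma)|\mathring A|^2}{|H|^{4-\sigma}}(-2|\nabla H|^2)$ in the $|H|^2$-evolution). Using Lemma \ref{lem-gradient-ineq}, specifically the inequality $|\nabla A|^2 - \frac1n|\nabla H|^2 \geq \frac{n-1}{2n+1}|\nabla A|^2 - C(n,d)(K_1+K_2)^2$, together with the standard Kato-type bound $|\nabla H|^2 \leq \frac{n}{n-1}\cdot\frac{n}{n+2} \cdot \dots$ — more precisely, using that a fixed positive multiple of $|\nabla A|^2$ dominates $|\nabla H|^2$ up to a curvature error — one sees that when $f_\sigma$ is large (so $|\mathring A|^2$ is a definite fraction of $|H|^2$ and in particular $|H|^2$ is large relative to $K_1, K_2, L$), choosing $\sigma$ small enough makes the net gradient contribution nonpositive up to an additive $C(n,d)(K_1+K_2)^2 |H|^{\sigma-2}$ term which is also absorbed. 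Simultaneously, the reaction terms contribute $\frac{2\tilde R_1 + \tilde P}{|H|^{2-\sigma}} - \frac{2(2-\sigma)|\mathring A|^2}{|H|^{4-\sigma}}\big(R_2' + \text{ambient}\big)$, and the strict pinching gap from Theorem \ref{pres-pinching}'s computation gives this a strictly negative leading behavior $\sim -\varepsilon_0 |\mathring A|^2 |H|^\sigma$ for $f_\sigma$ large. Adding everything, $(\frac{\partial}{\partial t} - \Delta) f_\sigma \leq 0$ on the region $\{f_\sigma \geq C_0\}$, and the maximum principle finishes the proof.

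\medskip

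The main obstacle I anticipate is bookkeeping the interplay between the exponent $\sigma$ and the two competing requirements: $\sigma$ must be small enough that the gradient terms combine favorably (this needs the Kato inequality / Lemma \ref{lem-gradient-ineq} to leave room for the extra $\frac{2(2-\sigma)|\mathring A|^2}{|H|^2}|\nabla H|^2$ term, which is where $\sigma < $ some explicit threshold like $\frac{2}{n+1}$ or similar enters), yet the argument only closes on the region where $f_\sigma$ is large, so one must also verify that "large $f_\sigma$" genuinely implies "$|H|^2$ large compared to $K_1, K_2, L$" — this uses the preserved strict pinching $|A|^2 \leq a|H|^2 - b$ crucially, since it bounds $|\mathring A|^2/|H|^2$ away from the borderline value. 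Getting these thresholds mutually consistent, and confirming that the ambient-curvature error terms (which scale like lower powers of $|H|$) are genuinely absorbed rather than dominating, is the delicate part; everything else is the routine quotient-rule computation plus invocation of results already established in Sections 2 and 3.
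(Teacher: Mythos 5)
Your overall strategy---bounding $f_\sigma=|\mathring A|^2/|H|^{2(1-\sigma)}$---is the right one, and your quotient-rule computation of $(\frac{\partial}{\partial t}-\Delta)f_\sigma$ together with the treatment of the gradient terms via Lemma \ref{lem-gradient-ineq} matches the paper. However, the argument cannot be closed by the maximum principle alone, and the step where you claim it can is a genuine gap. The reaction terms in the evolution of $f_\sigma$ combine to
\begin{equation*}
\frac{2}{|H|^{2(1-\sigma)}}\Big(R_1-\frac{|A|^2}{|H|^2}R_2\Big)+\frac{2\sigma R_2 f_\sigma}{|H|^2},
\end{equation*}
and the first bracket is only \emph{nonpositive}, not strictly negative of size $-\varepsilon_0|\mathring A|^2|H|^{\sigma}$ as you assert: in codimension one it vanishes identically (there $R_1=|A|^4$ and $R_2=|A|^2|H|^2$), and in general the negative quartic contributions from the computation in Theorem \ref{pres-pinching} act only in the $|\mathring{A}|_I$ directions. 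What survives is the genuinely bad term $2\sigma|A|^2f_\sigma$, which grows like $|H|^2f_\sigma$ and has nothing pointwise to absorb it, since the good gradient term $-2\epsilon_\nabla|\nabla H|^2/|H|^{2(1-\sigma)}$ may vanish at a spatial maximum of $f_\sigma$. The strictly positive quantity that actually does the work, $Z\geq\epsilon|\mathring A|^2|H|^2$, is not a reaction term of the parabolic equation at all; it enters only through Simons' identity for $\Delta|\mathring A|^2$, where it is paired with $\langle\mathring h_{ij},\nabla_i\nabla_jH\rangle$, a term with no pointwise sign, and it can be exploited only after multiplying by $f_\sigma^{p-1}$ and integrating by parts.

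This is precisely why the paper (following Huisken and Andrews--Baker) proceeds by iteration rather than by the maximum principle: it keeps the bad term $2\sigma|A|^2f_\sigma$ in the evolution inequality for $f_\sigma$, integrates Simons' identity to obtain the Poincar\'e-type estimate (\ref{integral-ineq-5}) bounding $\int_{M_t}|H|^2f_\sigma^p$ by gradient integrals, combines the two to get $\frac{d}{dt}\int_{M_t}f_\sigma^p\leq Cp\int_{M_t}f_\sigma^p+C^p$ for $p$ large and $\sigma$ of order $1/p$ (Proposition \ref{16}), uses the finiteness of the maximal existence time (Lemma \ref{17}) to conclude uniform-in-$t$ $L^p$ bounds, and finally runs a Stampacchia--Moser iteration with the Hoffman--Spruck Sobolev inequality to pass from $L^p$ to $L^\infty$. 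To repair your write-up you must replace the maximum-principle step with this integral iteration; the remainder of your computation (the quotient rule, the cross term $\frac{4(1-\sigma)}{|H|}\langle\nabla|H|,\nabla f_\sigma\rangle$, the use of Lemma \ref{lem-gradient-ineq}, and the lower bound $|H|^2\geq c>0$ coming from the preserved pinching) is correct and is reused there.
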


To prove Theorem \ref{thm-pinching-|A0|^2}, we define a function
$f_\sigma=\frac{|\mathring{A}|^2}{|H|^{2(1-\sigma)}}$ and wish to
find an upper bound of $f_\sigma$ for sufficiently small $\sigma$.
We first derive the evolution equation of $f_\sigma$.
\begin{prop}There is a constant $C$ depending only on $n$, $d$, $K_1$,
$K_2$ and $L$ such that along the mean curvature flow the following
evolution inequality holds.
\begin{equation}
\frac{\partial}{\partial t}f_\sigma \leq \Delta
f_\sigma+\frac{4(1-\sigma)}{|H|}\langle\nabla |H|,\nabla
f_\sigma\rangle-\frac{2\epsilon_\nabla}{|H|^{2(1-\sigma)}}|\nabla
H|^2+2\sigma |A|^2f_\sigma+\frac{C}{|H|^{2(1-\sigma)}}+Cf_\sigma.
\end{equation}
\end{prop}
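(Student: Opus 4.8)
The plan is to compute the evolution equation of $f_\sigma = |\mathring A|^2 / |H|^{2(1-\sigma)}$ directly from the evolution equations for $|\mathring A|^2$ and $|H|^2$ in Lemma~\ref{lem-evolution}, treating $f_\sigma$ as a quotient and carefully organizing the first-order (gradient) terms so that they combine into the advection term $\tfrac{4(1-\sigma)}{|H|}\langle \nabla|H|,\nabla f_\sigma\rangle$ plus a manifestly nonpositive contribution coming from a Kato-type inequality.

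First I would record the evolution of $|\mathring A|^2$. Since $\mathring A = A - \tfrac1n g\otimes H$ and the metric evolves, one gets from (\ref{evu-|A|^2})--(\ref{evu-|H|^2}) an equation of the form
\begin{equation*}
\frac{\partial}{\partial t}|\mathring A|^2 = \Delta|\mathring A|^2 - 2|\nabla\mathring A|^2 + 2(\text{reaction terms}) + (\text{ambient curvature terms}),
\end{equation*}
where the reaction terms are the $R_1$-type quadratic-in-curvature expressions minus their $\tfrac1n$-traces, and the ambient terms are controlled by $K_1,K_2,L$. Next I would use the general quotient rule: for $f_\sigma = u/v$ with $u=|\mathring A|^2$, $v=|H|^{2(1-\sigma)}$, one has
\begin{equation*}
\frac{\partial}{\partial t}f_\sigma - \Delta f_\sigma = \frac1v\Big(\frac{\partial u}{\partial t}-\Delta u\Big) - \frac{u}{v^2}\Big(\frac{\partial v}{\partial t}-\Delta v\Big) + \frac{2}{v}\langle\nabla v,\nabla f_\sigma\rangle.
\end{equation*}
Computing $\partial_t v - \Delta v$ for $v=|H|^{2(1-\sigma)}$ from (\ref{evu-|H|^2}) produces the factor $(1-\sigma)|H|^{-2\sigma}$ times $(\partial_t-\Delta)|H|^2$ plus a term $(1-\sigma)(1-2\sigma)|H|^{-2\sigma-2}\,|\nabla|H|^2|^2$, whose sign and coefficient must be tracked. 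Substituting $\nabla v = 2(1-\sigma)|H|^{-2\sigma}|H|\nabla|H|$ into the cross term gives exactly $\tfrac{4(1-\sigma)}{|H|}\langle\nabla|H|,\nabla f_\sigma\rangle$, which is the stated advection term.

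The heart of the estimate is the handling of the gradient terms. Collecting the $-\tfrac2v|\nabla\mathring A|^2$ coming from $u$, the positive $|\nabla|H||^2$-type terms coming from the $v$-derivatives, and the pieces generated by re-expressing $\langle\nabla v,\nabla f_\sigma\rangle$ in terms of $\nabla|\mathring A|^2$ and $\nabla|H|$, I would invoke the Kato-type / gradient inequality from Lemma~\ref{lem-gradient-ineq} (specifically (\ref{gradient-ineq-2}), $|\nabla A|^2 - \tfrac1n|\nabla H|^2 \ge \tfrac{n-1}{2n+1}|\nabla A|^2 - C(n,d)(K_1+K_2)^2$, together with $|\nabla\mathring A|^2 = |\nabla A|^2 - \tfrac1n|\nabla H|^2$) to show that all these first-order terms, after using Cauchy--Schwarz to absorb the cross terms, leave behind a net contribution bounded above by $-\tfrac{2\epsilon_\nabla}{|H|^{2(1-\sigma)}}|\nabla H|^2$ plus a constant-times-$|H|^{-2(1-\sigma)}$ error (the latter absorbing the $C(n,d)(K_1+K_2)^2$ term, which is legitimate because $|H|$ stays bounded below along the flow by the preserved pinching $|A|^2 \le a|H|^2 - b$ with $b>0$). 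This is the step I expect to be the main obstacle: one must choose $\epsilon_\nabla>0$ small and $\sigma$ small enough that the ``bad'' positive gradient term $(1-\sigma)(1-2\sigma)|H|^{-2\sigma-2}|\nabla|H|^2|^2 \le 4(1-\sigma)|H|^{-2(1-\sigma)}|\nabla H|^2$ is dominated, and verify the algebra of the Cauchy--Schwarz absorption is compatible with the coefficient $\tfrac{n-1}{2n+1}$ from Lemma~\ref{lem-gradient-ineq}.

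Finally, for the reaction side: the term $\tfrac{2}{v}(\partial_t - \Delta)$-reaction from $|\mathring A|^2$ contributes cubic-in-$\mathring A$ terms, which upon dividing by $|H|^{2(1-\sigma)}$ and writing one factor as $|\mathring A|^2 = f_\sigma |H|^{2(1-\sigma)}$ and estimating $|\mathring A|^2 \le |A|^2$ yields $2\sigma|A|^2 f_\sigma$ — here the pinching-type bookkeeping from the proof of Theorem~\ref{pres-pinching} shows the genuinely cubic part is nonpositive and only a $\sigma$-multiple survives — while the $-\tfrac{u}{v^2}(\partial_t-\Delta)v$ reaction term contributes $-2(1-\sigma)f_\sigma \cdot(\text{something})$ that is either nonpositive or absorbed. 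All remaining ambient-curvature terms (those with $K_1,K_2,L$) are linear or quadratic in $A$, hence after division by $|H|^{2(1-\sigma)}$ and using the lower bound on $|H|$ they are bounded by $C f_\sigma + C|H|^{-2(1-\sigma)}$. Assembling everything gives precisely the asserted inequality, with $C$ depending only on $n,d,K_1,K_2,L$.
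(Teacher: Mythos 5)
Your route is essentially the paper's: write $f_\sigma$ as a quotient, combine the evolution equations of $|\mathring{A}|^2$ and $|H|^2$ with the Laplacian of the quotient, identify the cross term as $\frac{4(1-\sigma)}{|H|}\langle\nabla|H|,\nabla f_\sigma\rangle$, extract $2\sigma|A|^2f_\sigma$ from the reaction terms via $R_2\leq|A|^2|H|^2$ and $R_1-\frac{|A|^2}{|H|^2}R_2\leq0$, and bound the ambient-curvature terms by $Cf_\sigma+C|H|^{-2(1-\sigma)}$ using the lower bound on $|H|$ from the preserved pinching. One remark before the real issue: there is no ``bad'' positive gradient term to dominate and no Cauchy--Schwarz absorption or smallness of $\sigma$ is needed at that stage. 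Carrying out the quotient-rule algebra exactly, as in (\ref{evo-f_sigma-3}), the $\sigma$-dependent gradient terms come out as $-\frac{2\sigma|\mathring{A}|^2}{|H|^{2(2-\sigma)}}|\nabla H|^2$ and $-\frac{4\sigma(1-\sigma)}{|H|^2}f_\sigma|\nabla|H||^2$, both nonpositive and simply discarded, and the surviving gradient contribution is exactly $-\frac{2}{|H|^{2(1-\sigma)}}\big(|\nabla A|^2-\frac{|A|^2}{|H|^2}|\nabla H|^2\big)$.

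The genuine gap is your choice of lemma for bounding that quantity. You propose (\ref{gradient-ineq-2}), i.e. $|\nabla\mathring{A}|^2\geq\frac{n-1}{2n+1}|\nabla A|^2-C$. But the coefficient in front of $|\nabla H|^2$ in the quantity to be controlled is $\frac{|A|^2}{|H|^2}$, which can be as large as $a$, not $\frac{1}{n}$; the discrepancy $\frac{|\mathring{A}|^2}{|H|^2}|\nabla H|^2\leq(a-\frac1n)|\nabla H|^2$ must then be absorbed into $\frac{n-1}{2n+1}|\nabla A|^2$. Since $|\nabla A|^2$ controls $|\nabla H|^2$ only up to the factor $\frac{3}{n+2}$, this requires $a-\frac1n<\frac{3(n-1)}{(2n+1)(n+2)}$, which fails for $n=2$ once $a$ is close to the admissible threshold $\frac23$ (one needs $\frac16<\frac{3}{20}$). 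The paper avoids this by applying (\ref{gradient-ineq-1}) directly together with the pinching ratio, obtaining $|\nabla A|^2-\frac{|A|^2}{|H|^2}|\nabla H|^2\geq\big(\frac{3}{n+2}-\eta-a\big)|\nabla H|^2-C(n,d,K_1,K_2,\eta)$ and setting $\epsilon_\nabla=\frac{3}{n+2}-\eta-a$, which is positive for every admissible $a$ in every dimension. Replace your use of (\ref{gradient-ineq-2}) by (\ref{gradient-ineq-1}) and your argument closes.
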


\begin{proof}
By the definition of $f_\sigma$, we have
\begin{equation}\label{evo-f_sigma-1}
\frac{\partial}{\partial t}f_\sigma=\frac{ \frac{\partial}{\partial
t}|\mathring{A}|^2}{|H|^{2(1-\sigma)}}-\frac{(1-\sigma)|\mathring{A}|^2\frac{\partial}{\partial
t}|H|^2}{|H|^{2(2-\sigma)}}.
\end{equation}

 If we put $a=\frac{1}{n}$ and $b=0$ in (\ref{evo-Q}), then get the
 evolution equation of $|\mathring{A}|^2$. From this and
 (\ref{evu-|H|^2}) we have
\begin{equation}\label{evo-f_sigma-2}
\begin{split}
\frac{\partial}{\partial
t}f_\sigma=&\frac{1}{|H|^{2(1-\sigma)}}\Big( \Delta
|\mathring{A}|^2-2|\nabla \mathring{A}|^2 +2R_1-\frac{2}{n}R_2+{\rm
P}_\frac{1}{n} \Big)\\
&-\frac{(1-\sigma)|\mathring{A}|^2}{|H|^{2(1-\sigma)}}\Big(
\Delta|H|^2-2|\nabla H|^2 +2R_2
+\sum_{k,\alpha,\beta}\bar{R}_{k\alpha k\beta}H^\alpha H^\beta\Big).
\end{split}
\end{equation}

The Laplacian of $f_\sigma$ can be computed as
\begin{equation}\label{laplacian-f_sigma}\begin{split}
\Delta f_\sigma
=&\frac{\Delta|\mathring{A}|^2}{|H|^{2(1-\sigma)}}-\frac{(1-\sigma)|\mathring{A}|^2\Delta|H|^2}{|H|^{2(2-\sigma)}}
+\frac{(2-\sigma)(1-\sigma)|\mathring{A}|^2|\nabla
|H|^2|^2}{|H|^{2(3-\sigma)}}\\
&-\frac{2(1-\sigma)\langle \nabla |\mathring{A}|^2,\nabla |H|^2
\rangle}{|H|^{2(2-\sigma)}}.
\end{split}\end{equation}

On the other hand, we have
\begin{equation}\label{1}\begin{split}
-\frac{2(1-\sigma)\langle \nabla |\mathring{A}|^2,
\nabla|H|^2\rangle}{|H|^{2(2-\sigma)}}=-\frac{2(1-\sigma)}{|H|^2}
\langle  \nabla |H|^2,\nabla f_\sigma
\rangle-\frac{8(1-\sigma)^2}{|H|^4}f_\sigma |H|^2 |\nabla |H||^2.
\end{split}\end{equation}
Hence
\begin{equation}\label{evo-f_sigma-3}
\begin{split}
\Big(\frac{\partial}{\partial t}- \Delta\Big)
f_\sigma=&\frac{2(1-\sigma)}{|H|^{2}}\langle \nabla |H|^2,\nabla
f_\sigma\rangle-\frac{2}{|H|^{2(1-\sigma)}}\Big(|\nabla A|^2
-\frac{|A|^2}{|H|^2}|\nabla H|^2\Big)\\
&-\frac{2\sigma|\mathring{A}|^2}{|H|^{2(2-\sigma)}}|\nabla
H|^2-\frac{4\sigma(1-\sigma)}{|H|^4}f_\sigma |H|^2|\nabla |H||^2\\
&+\frac{2\sigma R_2
f_\sigma}{|H|^2}+\frac{2}{|H|^{2(1-\sigma)}}\Big(R_1-\frac{|A|^2}{|H|^2}R_2\Big)\\
&+\frac{1}{|H|^{2(1-\sigma)}}{\rm
P}_{\frac{1}{n}}-\frac{2(1-\sigma)|\mathring{A}|^2}{|H|^{2(2-\sigma)}}\sum_{k,\alpha,\beta}\bar{R}_{k\alpha
k\beta}H^\alpha H^\beta.
\end{split}
\end{equation}

From Lemma \ref{lem-gradient-ineq} we have
\begin{equation}\label{2}
\begin{split}
|\nabla A|^2 -\frac{|A|^2}{|H|^2}|\nabla H|^2\geq&
\Big(\frac{3}{n+2}-\eta-a \Big)|\nabla H|^2-C(n,d,K_1,K_2,\eta)\\
=&\epsilon_\nabla |\nabla H|^2-C(n,d,K_1,K_2,\eta).
\end{split}\end{equation}
Here $a<\frac{4}{3n}$ for $n=2,\,3$ and $a<\frac{1}{n-1}$ for $n\geq
4$, and we choose positive constant $\eta$ depending only on $n$
such that $\epsilon_\nabla=\frac{3}{n+2}-\eta-a>0$. We also have the
following estimates.
$$R_2\leq |A|^2|H|^2,$$
$$R_1-\frac{|A|^2}{|H|^2}R_2 \leq0,$$
$${\rm P}_{\frac{1}{n}}\leq C|\mathring{A}|^2+C,$$ and
$$\sum_{k,\alpha,\beta}\bar{R}_{k\alpha k\beta}H^\alpha H^\beta\leq
C|H|^2,$$ where $C$ is a positive constant depending on $n$, $d$,
$K_1$, $K_2$ and $L$. It follows from these estimates,
(\ref{evo-f_sigma-3}) and (\ref{2}) that
\begin{equation*}
\begin{split}
\Big(\frac{\partial}{\partial t}- \Delta\Big) f_\sigma\leq
&\frac{4(1-\sigma)}{|H|}\langle \nabla |H| ,\nabla
f_\sigma\rangle-\frac{2\epsilon_\nabla}{|H|^{2(1-\sigma)}} |\nabla
H|^2 \\
& +2\sigma|A|^2f_\sigma+\frac{C}{|H|^{2(1-\sigma)}}+Cf_\sigma.
\end{split}
\end{equation*}
This completes the proof.
\end{proof}

To handle the reaction term $2\sigma|A|^2f_\sigma$, we need to
compute the Laplacian of $|\mathring{A}|^2$. As in
\cite{Andrews-Baker}, we have
\begin{equation}\label{laplacian-A0^2}
\begin{split}
\frac{1}{2}\Delta |\mathring{A}|^2\geq&|\nabla
\mathring{A}|^2+\langle \mathring{h}_{ij}, \nabla_i \nabla_j
H\rangle+Z-C|H|^2-C
\end{split}
\end{equation}
for some positive constant $C$ depending on $n,\ d, \ K_1,\ K_2$ and
$L$. Here
\begin{equation}
Z=\sum_{i,j,p,\alpha,\beta}H^\alpha h_{ip}^\alpha h_{pj}^{\beta}
h_{ij}^\beta-\sum_{\alpha,\beta}\Big(\sum_{i,j}h_{ij}^\alpha
h_{ij}^\beta\Big)^2-\sum_{i,j,\alpha,\beta}\Big(\sum_{p}\Big(h^\alpha_{ip}h^\beta_{jp}-h^\alpha_{jp}h^\beta_{ip}\Big)\Big)^2.
\end{equation}

\begin{prop}
There is a positive constant $C$  depending on $n,\ d,\ K_1, \ K_2,
\ L$ and $M_0$ such that for any $p\geq2$ and $\eta>0$, the
following inequality holds.
\begin{equation}\label{integral-ineq-5}
\begin{split}
\int_{M_t}|H|^2f_\sigma^p  \leq \frac{2p\eta+C}{\epsilon}
\int_{M_t}\frac{f_\sigma^{p-1}}{|H|^{2(1-\sigma)}}|\nabla
H|^2+\frac{p-1}{\epsilon\eta}\int_{M_t}f_\sigma^{p-2}|\nabla
f_\sigma|^2+C^p.
\end{split}
\end{equation}

\end{prop}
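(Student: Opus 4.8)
The plan is to prove the integral inequality (\ref{integral-ineq-5}) by starting from the pointwise Laplacian estimate (\ref{laplacian-A0^2}) and integrating against a suitable test function. Concretely, I would multiply (\ref{laplacian-A0^2}) by $\frac{f_\sigma^{p-1}}{|H|^{2(1-\sigma)}}$ and integrate over $M_t$. The left-hand side becomes $\frac12\int_{M_t}\frac{f_\sigma^{p-1}}{|H|^{2(1-\sigma)}}\Delta|\mathring{A}|^2$, which after integration by parts produces a gradient term $-\frac12\int_{M_t}\langle\nabla|\mathring{A}|^2,\nabla(f_\sigma^{p-1}|H|^{-2(1-\sigma)})\rangle$; expanding the latter gradient via the product rule and rewriting everything in terms of $\nabla f_\sigma$, $\nabla|H|$ and $\nabla\mathring A$ (as was done in (\ref{1}) and (\ref{evo-f_sigma-3})) yields a combination of $\int f_\sigma^{p-2}|\nabla f_\sigma|^2$, $\int \frac{f_\sigma^{p-1}}{|H|^{2(1-\sigma)}}|\nabla H|^2$ and cross terms. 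The cross terms are absorbed by Young's inequality with parameter $\eta$, which is exactly how the factors $2p\eta$ and $\frac{p-1}{\epsilon\eta}$ in (\ref{integral-ineq-5}) arise.

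The decisive term on the right of (\ref{laplacian-A0^2}) is $\langle\mathring{h}_{ij},\nabla_i\nabla_j H\rangle$. Here I would use the Codazzi equation $\nabla_i\nabla_j H = \nabla_i\nabla_j\mathrm{tr}_g A$ together with $\nabla_iH^\alpha = \sum_j\nabla_j h^\alpha_{ij} + (\text{ambient curvature terms})$, so that after a further integration by parts we convert $\int \frac{f_\sigma^{p-1}}{|H|^{2(1-\sigma)}}\langle\mathring{h}_{ij},\nabla_i\nabla_j H\rangle$ into terms involving $|\nabla\mathring A|^2$, $|\nabla H|^2$, $\nabla f_\sigma$, and lower-order curvature contributions bounded by $K_1,K_2,L$. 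The crucial structural point is that the resulting good term $\int \frac{f_\sigma^{p-1}}{|H|^{2(1-\sigma)}}|\nabla\mathring A|^2$ has a favorable sign and can be discarded, while the $|\nabla\mathring A|^2$ on the left-hand side of (\ref{laplacian-A0^2}) (also favorably signed) is likewise discarded or used to control a stray term. The term $Z$ must then be estimated from below; by the algebraic inequality $Z\geq -c|H|\,|\mathring A|\,|A|^2 - \dots$ — or more precisely the estimate $Z \geq |H|^2|\mathring A|^2$-type lower bound combined with $|A|^2 \leq a|H|^2 - b$ from the preserved pinching — one shows $-Z$ is dominated, modulo constants, by a multiple of $|H|^2 f_\sigma^{p}$ times a small coefficient, leaving a clean positive multiple of $\int_{M_t}|H|^2 f_\sigma^p$ that can be moved to the left-hand side. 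This is where the constant $\epsilon$ (and the requirement that it be positive, traceable to $\epsilon_\nabla>0$ and the pinching constant $a$) enters.

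The remaining pieces are routine: the terms $-C|H|^2 - C$ in (\ref{laplacian-A0^2}), multiplied by $\frac{f_\sigma^{p-1}}{|H|^{2(1-\sigma)}}$ and integrated, give $\int_{M_t} f_\sigma^{p-1}|H|^{2\sigma} + \int_{M_t}\frac{f_\sigma^{p-1}}{|H|^{2(1-\sigma)}}$, which are absorbed into $\int_{M_t}|H|^2 f_\sigma^p$ and the gradient term via Young's inequality after noting (from the pinching $|\mathring A|^2 \le |A|^2 \le a|H|^2$) that $f_\sigma \le a|H|^{2\sigma}$, so $|H|^2$ is comparable to a power of $f_\sigma$ up to constants depending on $M_0$; iterating the Young steps produces the constant $C^p$ on the right. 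I would also use $\int_{M_t} d\mu_t \le \int_{M_0}d\mu_0$ (immediate from (\ref{evo-volform})) and the fact that $|H|$ stays bounded below away from zero on any finite time interval where the pinching holds, to make sense of all the negative powers of $|H|$. The main obstacle is the careful bookkeeping in the integration-by-parts of the $\langle\mathring{h}_{ij},\nabla_i\nabla_j H\rangle$ term: one has to track how the Codazzi correction terms and the ambient curvature terms interact with the weight $f_\sigma^{p-1}|H|^{-2(1-\sigma)}$ so that no term with an uncontrolled power of $|H|$ in the denominator survives, and so that the coefficient of the good $|\nabla\mathring A|^2$ term genuinely has the right sign for all $p\ge 2$. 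Everything else reduces to Cauchy–Schwarz, Young's inequality, and invoking the preserved pinching condition and Berger's inequality to bound the ambient curvature contributions.
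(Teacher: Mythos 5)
Your plan follows the paper's proof essentially step for step: the paper likewise integrates the Simons-type inequality (\ref{laplacian-A0^2}) (packaged as a lower bound for $\Delta f_\sigma$) against $f_\sigma^{p-1}$, integrates the $\langle\mathring{h}_{ij},\nabla_i\nabla_j H\rangle$ term by parts using the contracted Codazzi equation, absorbs the cross terms by Young's inequality with parameter $\eta$ to produce the $2p\eta$ and $\frac{p-1}{\epsilon\eta}$ coefficients, invokes the Andrews--Baker lower bound $Z\geq\epsilon|\mathring{A}|^2|H|^2$ (valid under the preserved pinching) to move a positive multiple of $\int|H|^2f_\sigma^p$ to the left, and uses $f_\sigma\leq C|H|^{2\sigma}$ together with the lower bound on $|H|$ and Young's inequality to produce the $C^p$ term. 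The only caveat is that your first suggested form of the $Z$-estimate would not suffice — the strict positivity $Z\geq\epsilon|\mathring{A}|^2|H|^2$ (your "more precise" version) is the one actually needed, and the $\epsilon$ in the statement is exactly that constant rather than $\epsilon_\nabla$.
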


\begin{proof}
From (\ref{laplacian-f_sigma}) and (\ref{laplacian-A0^2}), we have
\begin{equation}\label{laplacian-f_sigma-ineq}
\begin{split}
\Delta f_\sigma\geq&\frac{2}{|H|^{2(1-\sigma)}}\langle
\mathring{h}_{ij},\nabla_i \nabla_j H
\rangle+\frac{2}{|H|^{2(1-\sigma)}}|\nabla\mathring{A}|^2+\frac{2}{
|H|^{2(1-\sigma)}}Z-\frac{2(C|H|^2+C)}{|H|^{2(1-\sigma)}}\\
&-\frac{2(1-\sigma)}{|H|}f_\sigma \Delta
|H|-\frac{2(1-\sigma)}{|H|^2}f_\sigma|\nabla
|H||^2+\frac{4(2-\sigma)(1-\sigma)}{|H|^2}f_\sigma|\nabla |H||^2\\
&-\frac{4(1-\sigma)}{|H|}\langle \nabla|H|,\nabla f_\sigma
\rangle-\frac{8(1-\sigma)^2}{|H|^2}f_{\sigma}|\nabla |H||^2\\
=&\frac{2}{|H|^{2(1-\sigma)}}\langle \mathring{h}_{ij},\nabla_i
\nabla_j H
\rangle+\frac{2}{|H|^{2(1-\sigma)}}|\nabla\mathring{A}|^2+\frac{2}{
|H|^{2(1-\sigma)}}Z\\
&-\frac{4(1-\sigma)}{|H|}\langle \nabla|H|,\nabla f_\sigma
\rangle-\frac{2(1-\sigma)}{|H|}f_\sigma \Delta |H|\\
&-\frac{2(1-\sigma)(1-2\sigma)}{|H|^2}f_{\sigma}|\nabla |H||^2
-\frac{C}{|H|^{-2\sigma}}-\frac{C}{|H|^{2(1-\sigma)}}.
\end{split}
\end{equation}

Multiplying both sides of (\ref{laplacian-f_sigma-ineq}) by
$f_\sigma^{p-1}$ and integrating over $M_t$ we obtain
\begin{equation}\label{integral-ineq-1}
\begin{split}
0\geq& (p-1) \int_{M_t}f_\sigma^{p-2}|\nabla
f_\sigma|^2+2\int_{M_t}\frac{f_\sigma^{p-1}}{|H|^{2(1-\sigma)}}\langle
\mathring{h}_{ij},\nabla_i \nabla_j H
\rangle+2\int_{M_t}\frac{f_\sigma^{p-1}}{|H|^{2(1-\sigma)}}|\nabla\mathring{A}|^2\\
&+2\int_{M_t}\frac{f_\sigma^{p-1}}{
|H|^{2(1-\sigma)}}Z-4(1-\sigma)\int_{M_t}\frac{f_\sigma^{p-1}}{|H|}\langle
\nabla|H|,\nabla f_\sigma
\rangle-2(1-\sigma)\int_{M_t}\frac{f_\sigma^p}{|H|}f_\sigma \Delta
|H|\\
&-2(1-\sigma)(1-2\sigma)\int_{M_t}\frac{f_\sigma^p}{|H|^2} |\nabla
|H||^2-C\int_{M_t}\frac{f_\sigma^{p-1}}{|H|^{-2\sigma}}-C\int_{M_t}\frac{f_{\sigma}^p}{|H|^{2(1-\sigma)}}.
\end{split}
\end{equation}

The first term on the right hand side of (\ref{integral-ineq-1}) is
nonnegative. For the second term, we have the following computation.
\begin{equation}\label{2nd-term}
\begin{split}
&2\int_{M_t}\frac{f_\sigma^{p-1}}{|H|^{2(1-\sigma)}}\langle
\mathring{h}_{ij},\nabla_i \nabla_j H \rangle \\=&-2\int_{M_t}
\Big\langle \nabla_i\Big(
\frac{f_\sigma^{p-1}}{|H|^{2(1-\sigma)}}\mathring{h}_{ij}
\Big),\nabla_j H \Big\rangle \\
=& -2\int_{M_t}\frac{(p-1)f_\sigma^{p-2}}{|H|^{2(1-\sigma)}}\langle
\nabla_if_\sigma \mathring{h}_{ij},\nabla_jH
\rangle+4\int_{M_t}\frac{(1-\sigma)f_\sigma^{p-1}}{|H|^{3-2\sigma}}\langle
\nabla_i|H| \mathring{h}_{ij},\nabla_jH \rangle\\
&-\frac{2(n-1)}{n}\int_{M_t}\frac{f_\sigma^{p-1}}{|H|^{2(1-\sigma)}}|\nabla
H|^2-2\int_{M_t}\frac{f_\sigma^{p-1}}{|H|^{2(1-\sigma)}} \langle
\sum_{i,j,\alpha}\bar{R}_{jii\alpha}e_\alpha,\nabla_jH \rangle.
\end{split}
\end{equation}

We also have
\begin{equation}\label{3}
\begin{split}
&-2(1-\sigma)\int_{M_t}\frac{f_\sigma^p}{|H|}f_\sigma \Delta
|H|\\
=&2(1-\sigma)\int_{M_t} \Big\langle \nabla
\Big(\frac{f_\sigma^p}{|H|}\Big),\nabla|H| \Big\rangle\\
=&2(1-\sigma)\int_{M_t}\frac{pf_\sigma^{p-1}}{|H|}\langle \nabla
f_\sigma,
\nabla|H|\rangle-2(1-\sigma)\int_{M-t}\frac{f_\sigma^p}{|H|^2}|\nabla|H||^2.
\end{split}
\end{equation}

Combining (\ref{integral-ineq-1}), (\ref{2nd-term}) and (\ref{3})
implies
\begin{equation}\label{integral-ineq-2}
\begin{split}
2\int_{M_t}\frac{f_\sigma^{p-1}}{|H|^{2(1-\sigma)}}Z\leq
&2(p-1)\int_{M_t}\frac{f_\sigma^{p-2}}{|H|^{2(1-\sigma)}}\langle
\nabla_if_\sigma \mathring{h}_{ij},\nabla_jH \rangle\\
&-4(1-\sigma)\int_{M_t}\frac{f_\sigma^{p-1}}{|H|^{3-2\sigma}}\langle
\nabla_i|H| \mathring{h}_{ij},\nabla_jH \rangle\\
&+\frac{2(n-1)}{n}\int_{M_t}\frac{f_\sigma^{p-1}}{|H|^{2(1-\sigma)}}|\nabla
H|^2\\
&-2(1-\sigma)(p-2)\int_{M_t}\frac{ f_\sigma^{p-1}}{|H|}\langle
\nabla |H|,\nabla f_\sigma \rangle
\\
&+2(1-\sigma)\int_{M_t}\frac{f\sigma^p}{|H|^2}|\nabla |H| |^2\\
&+C\int_{M_t}\frac{f_\sigma^{p-1}}{|H|^{-2\sigma}}+C\int_{M_t}\frac{f_{\sigma}^p}{|H|^{2(1-\sigma)}}\\
&+C\int_{M_t}\frac{f_\sigma^{p-1}}{|H|^{2(1-\sigma)}} |\nabla H|.
\end{split}
\end{equation}
Here $C$ is a positive constant depending on $n,\ d, \ K_1,\ K_2$
and $L$.

Notice that $f_\sigma\leq C|H|^{2\sigma}$ and $|\mathring{A}|^2\leq
f_\sigma|H|^{2(1-\sigma)}$, and we can pick $\sigma\in (0,1)$
sufficiently small. Also notice that the initial pinching condition
is preserved and implies that for $t\geq0$, there holds
\begin{equation*}|H|^2\geq C>0
\end{equation*}
 for some positive constant $C$ depending on  $n,\ d, \ K_1,\ K_2$
and $L$. This implies that
\begin{equation}\label{4}
\frac{f_\sigma^{p-1}}{|H|^{-2\sigma}}=
\frac{|H|^{\frac{2(p-1)}{p}}f_\sigma^{p-1}}{|H|^{2(1-\frac{1}{p})-2\sigma}}\leq
C|H|^{\frac{2(p-1)}{p}}f_\sigma^{p-1},
\end{equation}
\begin{equation}\label{5}
\frac{f_\sigma^{p-1}}{|H|^{2(1-\sigma)}}=\frac{|H|^{\frac{2(p-1)}{p}}f_\sigma^{p-1}}{|H|^{\frac{2(p-1)}{p}+2(1-\sigma)}}\leq
C|H|^{\frac{2(p-1)}{p}}f_\sigma^{p-1},
\end{equation}
and
\begin{equation}\label{6}
\frac{f_\sigma^{p-1}}{|H|^{2(1-\sigma)}} |\nabla
H|\leq\bar{\varepsilon} \frac{f_\sigma^{p-1}}{|H|^{2(1-\sigma)}}+
\bar{\varepsilon}^{-1} \frac{f_\sigma^{p-1}}{|H|^{2(1-\sigma)}}
|\nabla H|^2,
\end{equation}
for any positive constant $\bar{\varepsilon}$

 By using
Young's inequality, we have the following
\begin{equation}\label{7}|H|^{\frac{2(p-1)}{p}}f_\sigma^{p-1}\leq
\tilde{\varepsilon}^{\frac{p}{p-1}}
|H|^2f_\sigma^p+\tilde{\varepsilon}^{-p}
\end{equation}
for arbitrary positive constant $\tilde{\varepsilon}$.

From (\ref{4})-(\ref{7}), the last two line of
(\ref{integral-ineq-2}) is not bigger than
\begin{equation}\label{last-2-line}
C (1+\bar{\varepsilon})\tilde{\varepsilon}^{\frac{p}{p-1}}
\int_{M_t}|H|^2f_\sigma^p+C\tilde{\varepsilon}^{-p}{\rm
Vol}(M_0)+C\bar{\varepsilon}^{-1}\int_{M_t}\frac{f_\sigma^{p-1}}{|H|^{2(1-\sigma)}}
\end{equation}
for some positive constant $C$ depending on  $n, \ d,\ K_1,\ K_2$
and $L$.

On the other hand, we have the following inequalities as in
\cite{Andrews-Baker}.
\begin{equation}\label{8}
\begin{split}
&2(p-1)\int_{M_t}\frac{f_\sigma^{p-2}}{|H|^{2(1-\sigma)}}\langle
\nabla_if_\sigma \mathring{h}_{ij},\nabla_jH \rangle\\
&\leq \frac{p-1}{\eta}\int_{M_t}f_\sigma^{p-2}|\nabla
f_\sigma|^2+(p-1)\eta
\int_{M_t}\frac{f_\sigma^{p-1}}{|H|^{2(1-\sigma)}}|\nabla H|^2,
\end{split}
\end{equation}
\begin{equation}\label{9}
\begin{split}
-4(1-\sigma)\int_{M_t}\frac{f_\sigma^{p-1}}{|H|^{3-2\sigma}}\langle
\nabla_i|H| \mathring{h}_{ij},\nabla_jH \rangle\leq
4\int_{M_t}\frac{f_\sigma^{p-1}}{|H|^{2(1-\sigma)}}|\nabla H|^2,
\end{split}
\end{equation}
\begin{equation}\label{10}
\begin{split}
&-2(1-\sigma)(p-2)\int_{M_t}\frac{ f_\sigma^{p-1}}{|H|}\langle
\nabla |H|,\nabla f_\sigma \rangle\\
&\leq\frac{p-2}{\mu}\int_{M_t}f_\sigma^{p-2}|\nabla
f_\sigma|^2+(p-2)\mu\int_{M_t}\frac{f_\sigma^{p-1}}{|H|^{2(1-\sigma)}}|\nabla
H|^2,
\end{split}
\end{equation}
\begin{equation}\label{11}
2(1-\sigma)\int_{M_t}\frac{f_\sigma^p}{|H|^2}|\nabla
|H||^2\leq2\int_{M_t}\frac{f_\sigma^{p-1}}{|H|^{2(1-\sigma)}}|\nabla
H|^2.
\end{equation}

Combining (\ref{integral-ineq-2}), (\ref{last-2-line})-(\ref{11})
implies
\begin{equation}\label{integral-ineq-3}
\begin{split}
&2\int_{M_t}\frac{f_\sigma^{p-1}}{|H|^{2(1-\sigma)}}Z\\
&\leq\bigg(6+ \frac{2(n-1)}{n}+ (p-1)\eta +(p-2)\mu+
C\bar{\varepsilon}^{-1}\bigg)
\int_{M_t}\frac{f_\sigma^{p-1}}{|H|^{2(1-\sigma)}}|\nabla H|^2\\
&{\
 \ \ }+\bigg(\frac{p-1}{\eta}+\frac{p-2}{\mu}\bigg)\int_{M_t}f_\sigma^{p-2}|\nabla
f_\sigma|^2+C(1+\bar{\varepsilon})\tilde{\varepsilon}^{\frac{p}{p-1}}
\int_{M_t}|H|^2f_\sigma^p+C\tilde{\varepsilon}^{-p}{\rm Vol}(M_0).
\end{split}
\end{equation}

From Lemma 4 in \cite{Andrews-Baker} we have $Z\geq \epsilon
|\mathring{A}|^2|H|^2$ for some positive constant $\epsilon$. Then
from (\ref{integral-ineq-3}) we have
\begin{equation}\label{integral-ineq-4}
\begin{split}
&\Big(2\epsilon-C(1+\bar{\varepsilon})\tilde{\varepsilon}^{\frac{p}{p-1}}\Big)\int_{M_t}|H|^2f_\sigma^p\\
&\leq\bigg(6+ \frac{2(n-1)}{n}+ (p-1)\eta +(p-2)\mu+
C\bar{\varepsilon}^{-1}\bigg)
\int_{M_t}\frac{f_\sigma^{p-1}}{|H|^{2(1-\sigma)}}|\nabla H|^2\\
&{\
 \ \ }+\bigg(\frac{p-1}{\eta}+\frac{p-2}{\mu}\bigg)\int_{M_t}f_\sigma^{p-2}|\nabla
f_\sigma|^2+C\tilde{\varepsilon}^{-p}{\rm Vol}(M_0).
\end{split}
\end{equation}
Now we put $\bar{\varepsilon}=1$,
$\tilde{\varepsilon}=\Big(\frac{\epsilon}{2C}\Big)^{\frac{p-1}{p}}$,
and let $\eta=\mu$. Then (\ref{integral-ineq-4}) implies
\begin{equation*}
\begin{split}
\int_{M_t}|H|^2f_\sigma^p  \leq \frac{2p\eta+C}{\epsilon}
\int_{M_t}\frac{f_\sigma^{p-1}}{|H|^{2(1-\sigma)}}|\nabla
H|^2+\frac{p-1}{\epsilon\eta}\int_{M_t}f_\sigma^{p-2}|\nabla
f_\sigma|^2+C^p.
\end{split}
\end{equation*}
Here $C$ is a positive constant depending on $n,\ d,\ K_1, \ K_2, \
L$ and $M_0$.
\end{proof}

\begin{prop}\label{16}For sufficiently small $\sigma$ and large $p$, there holds
\begin{equation}\label{evo-int_M f_sigma^p3}
\begin{split}
\frac{d}{d t}\int_{M_t}f_\sigma^p\leq Cp\int_{M_t}f_\sigma^p+C^p,
\end{split}
\end{equation}
where $C$  is a  positive constant depending on $n,\ d,\ K_1, \ K_2,
\ L$ and $M_0$.
\end{prop}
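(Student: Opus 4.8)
The strategy is to upgrade the pointwise evolution inequality for $f_\sigma$ obtained in the previous proposition to a differential inequality for $\int_{M_t}f_\sigma^p$. I would multiply that inequality by $pf_\sigma^{p-1}$, rewrite $pf_\sigma^{p-1}\Delta f_\sigma=\Delta f_\sigma^p-p(p-1)f_\sigma^{p-2}|\nabla f_\sigma|^2$, integrate over the closed manifold $M_t$ (so that $\int_{M_t}\Delta f_\sigma^p\,d\mu_t=0$), and account for the moving measure through $\frac{\partial}{\partial t}d\mu_t=-|H|^2d\mu_t$ (Lemma \ref{lem-evolution}). This produces
\begin{equation*}
\begin{split}
\frac{d}{dt}\int_{M_t}f_\sigma^p\leq&-p(p-1)\int_{M_t}f_\sigma^{p-2}|\nabla f_\sigma|^2+4(1-\sigma)p\int_{M_t}\frac{f_\sigma^{p-1}}{|H|}\langle\nabla|H|,\nabla f_\sigma\rangle\\
&-2\epsilon_\nabla p\int_{M_t}\frac{f_\sigma^{p-1}}{|H|^{2(1-\sigma)}}|\nabla H|^2+2\sigma p\int_{M_t}|A|^2f_\sigma^p\\
&+Cp\int_{M_t}\frac{f_\sigma^{p-1}}{|H|^{2(1-\sigma)}}+Cp\int_{M_t}f_\sigma^p-\int_{M_t}|H|^2f_\sigma^p,
\end{split}
\end{equation*}
so the whole task is to bound the right-hand side by $Cp\int_{M_t}f_\sigma^p+C^p$.

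I would first dispose of the terms that do not carry the weight $|H|^2f_\sigma^p$. The preserved pinching $|A|^2\leq a|H|^2-b$ gives $|\mathring{A}|^2\leq(a-\tfrac1n)|H|^2$, hence $f_\sigma\leq(a-\tfrac1n)|H|^{2\sigma}$ and $f_\sigma^p/|H|^2\leq(a-\tfrac1n)f_\sigma^{p-1}/|H|^{2(1-\sigma)}$; combining this with $|\langle\nabla|H|,\nabla f_\sigma\rangle|\leq|\nabla H|\,|\nabla f_\sigma|$ and an application of Young's inequality with a $p$-dependent weight, the gradient cross term is bounded for $p\geq2$ by $\tfrac12p(p-1)\int_{M_t}f_\sigma^{p-2}|\nabla f_\sigma|^2+C\int_{M_t}f_\sigma^{p-1}|H|^{-2(1-\sigma)}|\nabla H|^2$. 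Next, the lower bound $|H|^2\geq C_0>0$ coming from the preserved pinching makes $|H|^{-2(1-\sigma)}$ bounded, so $Cp\int_{M_t}f_\sigma^{p-1}|H|^{-2(1-\sigma)}$ is controlled by $Cp\int_{M_t}f_\sigma^p+C^p$ via $f_\sigma^{p-1}\leq f_\sigma^p+1$ and ${\rm Vol}(M_t)\leq{\rm Vol}(M_0)$ (the factor $p$ being absorbed into the exponential). Together with $Cp\int_{M_t}f_\sigma^p$, which is already of the required form, and the half of $-p(p-1)\int_{M_t}f_\sigma^{p-2}|\nabla f_\sigma|^2$ and the $\epsilon_\nabla p$-piece of $-2\epsilon_\nabla p\int_{M_t}f_\sigma^{p-1}|H|^{-2(1-\sigma)}|\nabla H|^2$ that survive the cross-term estimate, it then remains to prove that
\begin{equation*}
2\sigma p\int_{M_t}|A|^2f_\sigma^p-\int_{M_t}|H|^2f_\sigma^p\leq\tfrac12p(p-1)\int_{M_t}f_\sigma^{p-2}|\nabla f_\sigma|^2+\epsilon_\nabla p\int_{M_t}\frac{f_\sigma^{p-1}}{|H|^{2(1-\sigma)}}|\nabla H|^2+C^p.
\end{equation*}

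This last inequality is the crux, and I expect its verification to be the main obstacle. Using $|A|^2\leq a|H|^2$ on the left, it suffices to absorb $2\sigma ap\int_{M_t}|H|^2f_\sigma^p$, for which I would invoke the integral estimate (\ref{integral-ineq-5}) with its free parameter $\eta$ taken proportional to $\sigma$: the resulting multiple of $\int_{M_t}f_\sigma^{p-2}|\nabla f_\sigma|^2$ has coefficient comparable to $p^2$ (times a constant depending on $a,\epsilon$) and is made $\leq\tfrac12p(p-1)$ by choosing the proportionality constant small, while the resulting multiple of $\int_{M_t}f_\sigma^{p-1}|H|^{-2(1-\sigma)}|\nabla H|^2$ then has coefficient comparable to $\sigma^2p^2$ and is made $\leq\epsilon_\nabla p$ precisely when $\sigma$ is small relative to $p$ (essentially $\sigma^2p$ bounded in terms of $a,\epsilon,\epsilon_\nabla$); the residual $C^p$ produced by (\ref{integral-ineq-5}) is harmless. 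Equivalently, one may bypass (\ref{integral-ineq-5}) by simply requiring $2\sigma ap\leq1$, so that $2\sigma p\int_{M_t}|A|^2f_\sigma^p-\int_{M_t}|H|^2f_\sigma^p\leq0$ and can be dropped outright. In both cases this is exactly where the hypotheses ``$\sigma$ sufficiently small'' and ``$p$ large'' enter, and the real work is the bookkeeping that makes both residual gradient integrals vanish simultaneously; once the constants are balanced, the right-hand side of the differential inequality is $\leq Cp\int_{M_t}f_\sigma^p+C^p$, which is the assertion.
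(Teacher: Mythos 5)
Your main line of argument is exactly the paper's: multiply the pointwise evolution inequality for $f_\sigma$ by $pf_\sigma^{p-1}$, integrate, absorb the cross term $4(1-\sigma)p\int\frac{f_\sigma^{p-1}}{|H|}|\nabla|H|||\nabla f_\sigma|$ by Young's inequality into the two negative gradient integrals, and then use the integral estimate (\ref{integral-ineq-5}) with $\eta\sim\sigma\sim 1/p$ to absorb the reaction term $2\sigma p\int|A|^2f_\sigma^p$ into what remains of those gradient integrals; this is precisely the paper's choice $\rho=\epsilon_\nabla/4$, $\eta=1/p$, $\sigma\lesssim 1/p$, so on this route your bookkeeping matches the paper's.

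Two of your variants are genuinely different and both are sound. First, you control $Cp\int f_\sigma^{p-1}|H|^{-2(1-\sigma)}$ directly from the uniform lower bound $|H|^2\geq C>0$ (a consequence of the preserved pinching) together with $f_\sigma^{p-1}\leq f_\sigma^p+1$ and the volume bound, whereas the paper converts this term into a multiple of $\int|H|^2f_\sigma^p$ via (\ref{5}) and (\ref{7}) and feeds it back through (\ref{integral-ineq-5}); your version is simpler and avoids one more pass through the integral estimate. Second, your observation that one may retain the term $-\int|H|^2f_\sigma^p$ coming from $\frac{\partial}{\partial t}d\mu_t=-|H|^2d\mu_t$ (which the paper simply discards) and use it to cancel the reaction term outright under the constraint $2a\sigma p\leq 1$ is correct and bypasses (\ref{integral-ineq-5}) entirely for the statement as literally written; the constraint it imposes, $\sigma\lesssim 1/p$, is of the same type as the paper's $\sigma\leq\epsilon/(8p)$. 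What the paper's route buys in exchange for the extra work is that the Poincar\'e-type inequality (\ref{integral-ineq-5}) is the tool that is re-used in the subsequent Stampacchia iteration leading to the sup bound on $f_\sigma$, so it cannot be dispensed with globally --- but for Proposition \ref{16} itself your shortcut is a valid and more elementary proof.
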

\begin{proof}
We first compute the evolution equation of $\int_{M_t}f_\sigma^p$ as
follows.
\begin{equation}\label{evo-int_M f_sigma^p}
\begin{split}
\frac{d}{d
t}\int_{M_t}f_\sigma^p\leq&\int_{M_t}pf_\sigma^{p-1}\frac{\partial
f_\sigma}{\partial t}\\
\leq&-p(p-1)\int_{M_t}f_\sigma^{p-2} | \nabla
f_\sigma|^2+4(1-\sigma)p\int_{M_t}\frac{f_\sigma^{p-1}}{|H|}|\nabla|H|||\nabla
f_\sigma|\\
&-2p\epsilon_\nabla
\int_{M_t}\frac{f_\sigma^{p-1}}{|H|^{2(1-\sigma)}}|\nabla
H|^2+2p\sigma\int_{M_t}|H|^2f_\sigma^p\\
&+Cp\int_{M_t}\frac{f_\sigma^{p-1}}{|H|^{2(1-\sigma)}}+Cp\int_{M_t}f_\sigma^p.
\end{split}
\end{equation}
The second integral on the right hand side may be estimated as
\begin{equation}\label{12}
4(1-\sigma)p\int_{M)_t}\frac{f_\sigma^{p-1}}{|H|}|\nabla|H|||\nabla
f_\sigma|\leq \frac{2p}{\rho}\int_{M_t}f_\sigma^{p-2}|\nabla
f_\sigma|^2+2p\rho\int_{M_t}\frac{f_\sigma^{p-1}}{|H|^{2(1-\sigma)}}|\nabla
H|^2.\end{equation} The last second term can be estimated  from
(\ref{5}) and (\ref{7}) as
\begin{equation}\label{13}
Cp\int_{M_t}\frac{f_\sigma^{p-1}}{|H|^{2(1-\sigma)}}\leq
\bar{C}\tilde{\tilde{\varepsilon}}^{\frac{p}{p-1}}
p\int_{M_t}|H|^2f_\sigma^p+\bar{C}p\tilde{\tilde{\varepsilon}}^{-p}{\rm
Vol}(M_0),
\end{equation}
for a  positive constant $\bar{C}$  depending on $n,\ d,\ K_1, \
K_2, \ L$ and $M_0$. Combining (\ref{integral-ineq-5}),
(\ref{evo-int_M f_sigma^p})-(\ref{13}) implies
\begin{equation}\label{evo-int_M f_sigma^p2}
\begin{split}
\frac{d}{d
t}\int_{M_t}f_\sigma^p\leq&\int_{M_t}pf_\sigma^{p-1}\frac{\partial
f_\sigma}{\partial t}\\
\leq&-p(p-1)\bigg(1-\frac{2}{\rho(p-1)}-\frac{2\sigma}{\epsilon\eta}
-\frac{\bar{C}\tilde{\tilde{\varepsilon}}^{\frac{p}{p-1}}}{\epsilon
\eta}\bigg)\int_{M_t}f_\sigma^{p-2} | \nabla
f_\sigma|^2 \\
&-2p\epsilon_\nabla\bigg(1-\frac{\rho}{\epsilon_\nabla}
-\frac{(2p\eta+C)\sigma}{\epsilon\epsilon_\nabla} +
\frac{(2p\eta+C)\bar{C}\tilde{\tilde{\varepsilon}}^{\frac{p}{p-1}}}{2\epsilon\epsilon_\nabla}\bigg)
\int_{M_t}\frac{f_\sigma^{p-1}}{|H|^{2(1-\sigma)}}|\nabla
H|^2\\
&
+Cp\int_{M_t}f_\sigma^p+\Big(2p\sigma+\bar{C}\tilde{\tilde{\varepsilon}}^{\frac{p}{p-1}}p\Big)
C^p+\bar{C}p\tilde{\tilde{\varepsilon}}^{-p}{\rm Vol}(M_0).
\end{split}
\end{equation}
 Now we pick $\rho=\frac{\epsilon_\nabla}{4}$,
$\eta=\frac{1}{p}$, $\tilde{\tilde{\varepsilon}}=\Big(\min\{
\frac{\epsilon\epsilon_\nabla}{2(2+C)},\frac{\epsilon}{4\bar{C}p}\}\Big)^{\frac{p-1}{p}}$,
and let $p\geq\frac{32}{\epsilon_\nabla}+1$ and $\sigma\leq
\min\{\frac{\epsilon\epsilon_\nabla}{4(2+C)},
\frac{\epsilon}{8p}\}$. Then (\ref{evo-int_M f_sigma^p2}) reduces to
\begin{equation*}
\begin{split}
\frac{d}{d t}\int_{M_t}f_\sigma^p\leq Cp\int_{M_t}f_\sigma^p+C^p,
\end{split}
\end{equation*}
for a  positive constant $C$  depending on $n,\ d,\ K_1, \ K_2, \ L$
and $M_0$.
\end{proof}

To give an upper bound of $\int_{M_t}f_\sigma^p$, we need to show
that the maximal existence time of the mean curvature flow under the
initial curvature pinching condition is finite.
\begin{lemma}\label{17}
Under the initial curvature pinching condition, the mean curvature
flow has a smooth solution on a finite maximal time interval $[0,T)$
and $\max_{M_t}|A|\rightarrow \infty$ as $t\rightarrow T$.
\end{lemma}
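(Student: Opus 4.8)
The plan is to combine the short-time existence (already noted via De Turck's trick) with a blow-up argument driven by the evolution equation for $|H|^2$. First I would invoke the short-time existence result to get a maximal half-open interval $[0,T)$ on which the flow is smooth, with $0 < T \le \infty$; the standard fact is that if $T < \infty$ then $\max_{M_t}|A| \to \infty$ as $t \to T$ (otherwise, bounding $|A|$ together with the ambient bounds (\ref{K_N})--(\ref{inj-radius-N}) yields via the evolution equations and parabolic regularity uniform bounds on all derivatives of $A$, so the flow extends smoothly past $T$, contradicting maximality). So the real content of the lemma is that $T < \infty$ under the initial pinching condition.

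\emph{Finiteness of $T$.} Here I would use the preserved pinching $|A|^2 \le a|H|^2 - b$ from Theorem \ref{pres-pinching}, which in particular gives $|H|^2 \ge b/a > 0$ everywhere and for all time, so the mean curvature stays bounded below. Now look at (\ref{evu-|H|^2}): the zeroth-order terms are $2\sum_{i,j}(\sum_\alpha H^\alpha h^\alpha_{ij})^2 + 2\sum_{k,\alpha,\beta}\bar R_{k\alpha k\beta}H^\alpha H^\beta$. The first term is bounded below by $\frac{2}{n}|H|^4$ (Cauchy--Schwarz, taking the trace direction), and the curvature term is bounded below by $-2nK_2|H|^2$ using Berger's inequality. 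Applying the parabolic maximum principle to $h_{\min}(t) := \min_{M_t}|H|^2$ gives the ODE differential inequality
\begin{equation*}
\frac{d}{dt}h_{\min} \ge \frac{2}{n}h_{\min}^2 - 2nK_2\, h_{\min}.
\end{equation*}
Since $h_{\min}(0) \ge b/a > 0$, if $h_{\min}(0) > n^2 K_2$ this forces blow-up of $h_{\min}$ in finite time; in general one first needs $h_{\min}$ to become large. To guarantee that, I would note that the pinching is scale-invariant in a way that lets us assume (or arrange, shrinking $b$ slightly as in the paragraph after Theorem \ref{pres-pinching}) that $b$ is as large as needed, hence $b/a > n^2 K_2$, so $\frac{d}{dt}h_{\min} \ge \frac{1}{n}h_{\min}^2$ for all $t$, which integrates to $h_{\min}(t) \ge (h_{\min}(0)^{-1} - t/n)^{-1}$, blowing up by time $T \le n/h_{\min}(0) < \infty$. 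Since $|A|^2 \ge \frac{1}{n}|H|^2$ always, $\max_{M_t}|A| \to \infty$ as well.

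\emph{The characterization of $T$.} The remaining point, that $\max_{M_t}|A| \to \infty$ precisely as $t \to T$, is the standard long-time existence obstruction for geometric flows: I would argue by contradiction. If $\sup_{[0,T)}\max_{M_t}|A| =: \Lambda < \infty$, then the bound on $|A|$, the fixed ambient geometry bounds (\ref{K_N})--(\ref{inj-radius-N}), and Shi-type (Bando-type) interpolation estimates applied to the evolution equations for $|\nabla^m A|^2$ give uniform bounds on all $|\nabla^m A|$ on $[0,T)$; combined with $T < \infty$ this bounds the displacement $\int_0^T |H|\,dt < \infty$, so $F_t$ converges in $C^\infty$ to a smooth immersion $F_T$, and short-time existence starting from $F_T$ extends the flow past $T$, contradicting maximality of $T$.

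\textbf{Main obstacle.} The delicate step is ensuring $h_{\min}(0)$ is large enough (relative to $nK_2$) that the reaction term in the $|H|^2$ evolution dominates the linear ambient-curvature term and forces finite-time blow-up; this is exactly why the pinching constant $b_0$ (equivalently $b_1$) must be taken sufficiently large, and it is the one place the ambient curvature bounds genuinely enter the finiteness argument rather than just the preservation of pinching. The derivative estimates needed for the characterization of $T$ are by now routine for mean curvature flow in a manifold with bounded geometry, so I do not anticipate trouble there.
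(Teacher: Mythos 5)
Your treatment of the extension criterion (a uniform bound on $|A|$ over a finite interval lets one bound the displacement and all derivatives of $A$, hence continue the flow) matches the paper's first step. The gap is in your finiteness argument. You apply the minimum principle to $h_{\min}=\min_{M_t}|H|^2$ using (\ref{evu-|H|^2}) and silently discard the term $-2|\nabla H|^2$. In codimension one that is harmless, but here $H$ is vector-valued: at a spatial minimum of $|H|^2$ one only gets $\langle H,\nabla_iH\rangle=0$ for all $i$, not $\nabla H=0$, so $-2|\nabla H|^2$ is a genuinely negative term of the wrong sign for a lower bound, and the ODE $\frac{d}{dt}h_{\min}\ge\frac{2}{n}h_{\min}^2-2nK_2\,h_{\min}$ does not follow. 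You also cannot repair this with Theorem \ref{thm-gradient}, since its proof uses the finiteness of $T$ that you are trying to establish. A second, smaller problem is the claim that the pinching ``lets us assume $b$ is as large as needed'': the condition $|A|^2\le a|H|^2-b$ is not scale-invariant precisely because of the additive constant, and $b$ is dictated by the given initial immersion, not at your disposal. (Also, the ambient reaction term is controlled by the lower curvature bound, so the threshold should involve $K_1$, not $K_2$.)

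The paper's device repairs both defects at once: it runs the maximum principle not on $|H|^2$ but on $Q=|A|^2-a|H|^2+b(t)$ with $a$ at the borderline value and $b(t)$ solving the Riccati equation $b'=\frac{2b^2}{n(a-\frac{1}{n})}-C_3b-C_4$. The gradient terms in the evolution of $Q$ combine to $-2(|\nabla A|^2-a|\nabla H|^2)$, which is bounded above by a constant via Lemma \ref{lem-gradient-ineq} because $a$ lies below the threshold $\frac{3}{n+2}$, so the obstruction above disappears; and $b_1$ in Theorem \ref{pres-pinching} is defined to exceed the larger root of the Riccati polynomial, so $b(0)=b_\varepsilon>b_1$ automatically forces $b(t)\nearrow\infty$ in finite time $t_0$ with no extra largeness hypothesis. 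Then $Q<0$ yields $|H|^2\ge\frac{b(t)}{a-1/n}\to\infty$, hence $T\le t_0<\infty$. If you insist on arguing directly on $|H|^2$, you would have to both control $|\nabla H|^2$ by independent means and verify quantitatively that $\frac{b_1}{a-1/n}$ exceeds the threshold of order $n^2K_1$ (this does follow from the explicit form of $C_1$ and $C_3$, but it must be checked, not assumed); the time-dependent $b(t)$ is the cleaner route.
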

\begin{proof}
We first show that if the second fundamental form is uniformly
bounded by a positive constant $K$ in a finite time interval
$[0,T)$, then the mean curvature flow can be extended over the time.
For any fixed $x\in M$, any $\tau,\varrho\in [0,T)$ such that
$\tau<\varrho$, $F(x,t)$, $t\in [\tau,\varrho]$ is a segment in $N$
connecting $F(x,\tau)$ and $F(x,\varrho)$. We have
\begin{equation}\label{finite-dist}
{\rm dist }(F(x,\tau),F(x,\varrho))\leq\int_{\tau}^{\varrho}\Big|
\frac{\partial}{\partial t}F(x,t)
\Big|dt=\int_{\tau}^{\varrho}|H(x,t)|dt\leq CT<\infty.
\end{equation}
Hence $F(x,t)$ converges uniformly to some continuous limit function
$F(x,T)$.  We want to show that $F(x,T)$ is a smooth immersion. To
do this, we only have to establish uniform bounds  for all the
covariant derivatives of the second fundamental form on $M_t$, $t\in
[0,T)$. Since $M_t$ stays in a compact region on $N$ in view of
(\ref{finite-dist}), we have $\max_{0\leq l\leq
m}|\bar{\nabla}^m\bar{R}|\Big|_{M_t}\leq C_m$ for positive constants
$C_m$ independent of $t$.

First, by the evolution of the second fundamental form, as
Proposition 7.1 in \cite{WaM1}  for example, and induction on $m$,
we have the following evolution equations
\begin{equation}\label{evo-derivatives}
\begin{split}
\frac{\partial}{\partial t}|\nabla^m A|^2\leq &\Delta|\nabla^m
A|^2-2|\nabla ^{m+1}A|^2\\
&+C(n,m)\Big\{ \sum_{i+j+k=m} |\nabla^i A||\nabla^j A||\nabla^k
A||\nabla^m A|\\
& +\bar{C}_m \sum_{i\leq m} |\nabla^i A||\nabla^m A|
+\bar{\bar{C}}_m|\nabla^m A| \Big\}.
\end{split}
\end{equation}

For any $\tau\in [0,T)$, we consider the mean curvature flow on the
time interval $[0,\tau]$. Consider the function $G=t|\nabla
A|^2+|A|^2$, which is bounded by $K$ at $t=0$. Differentiating the
function we get for some positive constant $C$ independent of $t$
and $\tau$
\begin{equation}\label{evo-derivatives-G}
\begin{split}
\frac{\partial}{\partial t}G\leq& \Delta G+|\nabla
A|^2-2(t|\nabla^2A|^2+|\nabla
A|^2)\\
&+Ct(|A|^2|\nabla A|^2+|\nabla A|^2+|A||\nabla A|+|\nabla A|)\\
&+C(|A|^4+|A|^2+|A|).
\end{split}
\end{equation}

Since we have already assumed that the second fundamental form is
uniformly bounded on $[0,T)$ by $K$, (\ref{evo-derivatives-G}) may
be reduced to
\begin{equation}\label{evo-derivatives-G2}
\begin{split}
\frac{\partial}{\partial t}G\leq& \Delta G+(Ct-1)|\nabla A|^2+C,
\end{split}
\end{equation}
where $C$ is a positive constant independent of $t$ and $\tau$. For
$t\in [0,\frac{1}{C}]$, where $C$ is as in
(\ref{evo-derivatives-G2}),  we obtain from
(\ref{evo-derivatives-G2})
\begin{equation}\label{evo-derivatives-G3}
\begin{split}
\frac{\partial}{\partial t}G\leq& \Delta G  +C,
\end{split}
\end{equation}
which implies by the maximum principle $G\leq  K+Ct$. Hence $|\nabla
A|^2\leq \frac{G}{t}\leq \frac{K}{t}+C$ for $t\in (0,\frac{1}{C}]$.
If $t>\frac{1}{C}$, the same argument on time interval
$[t-\frac{1}{C},t]$ yields $|\nabla A|^2\leq \tilde{C}$ for some
$\tilde{C}$ independent of $t$ and $\tau$. Since $\tau\in [0,T)$ is
arbitrary, we have $|\nabla A|^2\leq C(1+\frac{1}{t})$ for $t\in
(0,T)$, where $C$ a constant independent of $t$.

Now we assume that there are positive constants $C_k$ independent of
$t$ such that $|\nabla^k A|^2\leq C_{k}(1+\frac{1}{t^k})$ holds for
$k=1,2,\cdots,m-1$ and $t\in (0,T)$. Using Young's inequality,
(\ref{evo-derivatives}) implies
\begin{equation}\label{evo-derivatives,}
\begin{split}
\frac{\partial}{\partial t}|\nabla^k A|^2\leq &\Delta|\nabla^k
A|^2-2|\nabla ^{k+1}A|^2
+\tilde{C}_k\bigg[\Big(1+\frac{1}{t^k}\Big)^{\frac{1}{2}}|\nabla^k
A|+|\nabla^k A|^2+1\bigg]\\
\leq&\Delta|\nabla^k A|^2-2|\nabla ^{k+1}A|^2
+\tilde{C}_k\Big(|\nabla^k A|^2+1+\frac{1}{t^k}\Big),
\end{split}
\end{equation}
where $\tilde{C}_k$ is a positive constant depending on $k$ and
others but independent of $t$.

 Consider $G=t^m|\nabla^m
A|^2+mt^{m-1}|\nabla^{m-1}A|^2$. Differentiating $G$ with respect to
$t$ gives
\begin{equation}\label{evo-derivatives-Gm}
\begin{split}
\frac{\partial}{\partial t}G\leq& mt^{m-1}|\nabla ^m A|^2
+t^m\Big[\Delta|\nabla^mA|^2 +\tilde{C}_{m} \Big(|\nabla^m
A|^2+1+\frac{1}{t^m}\Big) \Big]\\
&+m\Big\{ (m-1)t^{m-2}|\nabla^{m-1}A|^2\\
& +t^{m-1}\Big[\Delta|\nabla^{m-1}A|^2-2|\nabla^{m}A|^2
+\tilde{C}_{m-1} \Big(|\nabla^{m-1} A|^2+1+\frac{1}{t^{m-1}}\Big)
\Big]\Big\}\\
\leq& \Delta G +(\tilde{C}_{m}t-m)t^{m-1}|\nabla ^m A|^2 +\hat{C}_m,
\end{split}
\end{equation}
provided $t\leq 1$, for some positive constant $\hat{C}_m$ depending
on $m$ and others but independent of $t$. Hence for $t\in(0,
\min\{1,\frac{m}{\tilde{C}_{m}}\})$, by maximum principle,
$|\nabla^m A|^2\leq \frac{G}{t^m}\leq \frac{\hat{C}_m}{t^{m-1}}\leq
C_m(1+\frac{1}{t^m})$ for some positive constant ${C}_m$ depending
on $m$ and others but independent of $t$. For
$t>\min\{1,\frac{m}{\tilde{C}_{m}}\}$, the same argument gives the
bound of $|\nabla^mA|^2$. Hence we have proved that
$|\nabla^mA|^2\leq C_m(1+\frac{1}{t^m})$  for $t\in(0,T)$ for
constants ${C}_m$ independent of $t$.

Now we prove that, under the initial curvature pinching condition,
the second fundamental form of the submanifold will blow up in
finite time. Consider the function $Q=|A|^2-a|H|^2+b(t)$ where
$a=\frac{4}{3n}$  for $n=2,3$ and $a=\frac{1}{n-1}$  for $n\geq4$
and $b(t)$ is a function of $t$ with $b(0)$ such that $Q(0)<0$. By
the initial pinching assumption, we may pick
$b(0)=b_\varepsilon>b_1$. We will show that $Q<0$ is preserved for a
suitable $b(t)$. Suppose not, then there is a first time such that
$Q=0$ at a point. Then at this point, we have the following estimate
as (\ref{14})
\begin{equation}\label{15}
\begin{split} \Big(\frac{\partial}{\partial t}-\Delta\Big) Q\leq &
-\frac{2nab}{n(a-\frac{1}{n})}|\mathring{A}|_H^2-\frac{4b}{n(a-\frac{1}{n})}|\mathring{A}|_I^2-\frac{2b^2}{n(a-\frac{1}{n})}\\
&+C_1|\mathring{A}|_H^2+C_2|\mathring{A}|_I^2+C_3 b+C_4+b'(t).
\end{split}\end{equation}

Now we pick $b(t)$ such that
$b'(t)=\frac{2b^2}{n(a-\frac{1}{n})}-C_3 b-C_4$. Then
$\Big(\frac{\partial}{\partial t}-\Delta\Big) Q<0$, which implies
that $Q<0$ is preserved along the mean curvature flow. On the other
hand, it is easy to check that $b(t)$ is monotone increasing and
becomes unbounded as $t\rightarrow t_0$ for some $t_0<\infty$. Hence
$|A|^2$ becomes unbounded as $t\rightarrow t_0$. This completes the
proof of the lemma.

\end{proof}

Now from Proposition \ref{16} and Lemma \ref{17}, we see that
$\int_{M_t}f_\sigma^p\leq C$ holds for $t\in [0,T)$ with a positive
constant $C$ independent of $t$. We also have a Sobolev inequality
for $M_t$ (see \cite{HS}). Hence we may apply the same argument as
in \cite{H1} and \cite{H2} to derive a bound for $f_\sigma$ if
$\sigma$ is small enough. This completes the proof of Theorem
\ref{thm-pinching-|A0|^2}.

\section{The gradient estimate of the mean curvature}

To compare the mean curvature of the submanifold at different
points, we need to give an estimate of the gradient of the mean
curvature.
\begin{theorem}\label{thm-gradient}
For any $\eta>0$, there is a constant $C_\eta<\infty$ independent of
$t$ such that
\begin{equation}
|\nabla H|^2\leq \eta |H|^4+C_\eta.
\end{equation}
\end{theorem}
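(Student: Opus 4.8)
The plan is to follow the by-now-standard gradient-estimate argument of Huisken, adapted to the higher-codimension Riemannian setting as in \cite{H1}, \cite{H2} and \cite{Andrews-Baker}. Introduce the auxiliary function
\begin{equation*}
g=\frac{|\nabla H|^2}{|H|^2}+N_1\,|\mathring A|^2+N_2
\end{equation*}
(or a variant such as $g=|\nabla H|^2+N_1|H|^2|\mathring A|^2+N_2|H|^2$, whichever keeps the error terms manageable), where $N_1,N_2$ are large constants to be chosen depending on $n,d,K_1,K_2,L$ and $M_0$. The point is that $g$ combines a quantity controlling $|\nabla H|^2/|H|^4$ with the pinching quantity $|\mathring A|^2$, which by Theorem~\ref{thm-pinching-|A0|^2} already satisfies $|\mathring A|^2\le C_0|H|^{2-\delta}$, hence is sublinear in $|H|^2$; this is exactly the ingredient that lets the reaction terms be absorbed. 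Since $|H|^2\ge C>0$ along the flow (as noted after \eqref{integral-ineq-2}), $g$ is well defined and smooth.

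First I would record the evolution equations for the ingredients: for $|\nabla H|^2$ one differentiates \eqref{evu-|H|^2} once more, obtaining $(\partial_t-\Delta)|\nabla H|^2\le -2|\nabla^2H|^2+c\,|A|^2|\nabla H|^2+(\text{curvature and }\bar\nabla\bar R\text{ terms})$, where the ambient-curvature contributions are controlled using \eqref{K_N}, \eqref{1st-deri-N} and Berger's inequality; for $|H|^2$ one uses \eqref{evu-|H|^2} directly, noting $(\partial_t-\Delta)|H|^2\le -2|\nabla H|^2+c|A|^2|H|^2+c|H|^2$; and for $|\mathring A|^2$ one uses the Laplacian lower bound \eqref{laplacian-A0^2} together with the Kato-type gradient inequality \eqref{gradient-ineq-2}, which gives $(\partial_t-\Delta)|\mathring A|^2\le -2\epsilon_\nabla'|\nabla H|^2 + c|A|^2|\mathring A|^2+c|H|^2+c$ (this is where the coefficient restriction $a<\tfrac{1}{n-1}$, resp. $a<\tfrac{4}{3n}$, is used to keep $\epsilon_\nabla'>0$). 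Assembling these, $(\partial_t-\Delta)g$ is bounded above by the sum of a good negative gradient term coming from the $N_1|\mathring A|^2$ piece, a collection of terms proportional to $|A|^2g$, $|A|^2$ and lower order, plus the usual Bochner cross term $\tfrac{2}{|H|^2}\langle\nabla|H|^2,\nabla g\rangle$-type transport term and a term like $\tfrac{c}{|H|^2}|\nabla H|^2\cdot\tfrac{|\nabla H|^2}{|H|^2}$ that must be dominated by $-2|\nabla^2H|^2/|H|^2$ after using $|\nabla|H||\le|\nabla H|$ and $|\nabla|\nabla H|^2|\le 2|\nabla H||\nabla^2H|$.

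The key step is the choice of constants: one first picks $N_1$ large enough that the negative term $-2\epsilon_\nabla'N_1|\nabla H|^2$ from the $|\mathring A|^2$-evolution swamps the bad $+c\,|\nabla H|^2$ terms produced in the evolution of $|\nabla H|^2/|H|^2$; then, using $|A|^2\le a|H|^2$ and $|\mathring A|^2\le C_0|H|^{2-\delta}$ (Theorem~\ref{thm-pinching-|A0|^2}), the reaction terms of the shape $|A|^2 g$ are seen to be of the form $c|H|^2\big(\tfrac{|\nabla H|^2}{|H|^2}+N_1|\mathring A|^2+N_2\big)$, and one arranges, after possibly enlarging $N_2$, that at an interior space-time maximum of $g$ with $g$ large one has $(\partial_t-\Delta)g<0$ — more precisely one runs the maximum principle on $g$ itself, or on $|\nabla H|^2-\eta|H|^4-C_\eta$ directly with $\eta$ prescribed, choosing $N_1,N_2,C_\eta$ in terms of $\eta$. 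Concretely, set $h=|\nabla H|^2-\eta|H|^4+N_1(|H|^2+1)|\mathring A|^2$ or similar, with $N_1=N_1(\eta)$; show $(\partial_t-\Delta)h\le c\,h$ plus a constant once $|H|^2$ is large, and conclude $\sup_{M_t}h\le\max\{\sup_{M_0}h,\ c\}$, which yields $|\nabla H|^2\le\eta|H|^4+C_\eta$ with $C_\eta$ independent of $t$.

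I expect the main obstacle to be the bookkeeping of the ambient-curvature and $\bar\nabla\bar R$ error terms in the evolution of $|\nabla H|^2$ and in the cross terms: these scale like $|A||\nabla A|$, $|A||\nabla H|$, $|H||\nabla H|$, and lower order in $|A|,|H|$, and one must check that after dividing by $|H|^2$ and using $|H|^2\ge C>0$ together with Young's inequality they are either absorbed into $-2|\nabla^2H|^2/|H|^2$ or into the $c\,h$ term — without spoiling the sign choices made for $N_1,N_2$. A secondary subtlety is that one genuinely needs the already-established pinching estimate $|\mathring A|^2=o(|H|^2)$ (Theorem~\ref{thm-pinching-|A0|^2}), not merely the preserved pinching $|\mathring A|^2\le(a-\tfrac1n)|H|^2$, for the $N_1|\mathring A|^2$ term to be a lower-order perturbation when $\eta$ is small; this is exactly the point where the ordering of the sections in the paper matters.
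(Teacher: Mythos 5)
Your proposal is correct and follows essentially the same route as the paper: the paper's test function is $f=|\nabla H|^2+(N_1+N_2|A|^2)|\mathring{A}|^2-\eta|H|^4$, the negative gradient terms come from the evolution of $(N_1+N_2|A|^2)|\mathring{A}|^2$ combined with the Kato-type inequality (\ref{gradient-ineq-2}), and Theorem \ref{thm-pinching-|A0|^2} is used exactly as you indicate, both for the cross term $\langle\nabla|A|^2,\nabla|\mathring{A}|^2\rangle$ and to absorb $|\mathring{A}|^2|H|^4\le C_0|H|^{6-\delta}$ into $-\tfrac{2}{n}\eta|H|^6$. The only cosmetic difference is at the end: the paper settles for $\partial_t f\le\Delta f+C(N_1,N_2,\eta)$ and invokes the finiteness of the maximal time $T$ (Lemma \ref{17}) to conclude $f\le C_\eta$, rather than arranging a strict sign at a spatial maximum.
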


\begin{proof}
First we have the following inequalities by using a similar
computation as in \cite{Andrews-Baker} and \cite{H2}
\begin{equation}\label{18}
\frac{\partial}{\partial t}|\nabla H|^2\leq \Delta |\nabla
H|^2-2|\nabla^2 H|^2+C|H|^2|\nabla A|^2+C|\nabla A|^2+C|H|^2,
\end{equation}
\begin{equation}\label{19}
\frac{\partial}{\partial t}| H|^4 \geq \Delta|H|^4 -12|H|^2|\nabla
H|^2+\frac{2}{n}|H|^6-C,
\end{equation}
where $C$ is a positive constant independent of $t$.

For any $N_1,N_2>0$,
\begin{equation}\label{20}
\begin{split}
\frac{\partial}{\partial t} ((N_1+N_2|A|^2)|\mathring{A}|^2)=&
\Delta ((N_1+N_2|A|^2)|\mathring{A}|^2)-2N_2\langle \nabla|A|^2,
\nabla|\mathring{A}|^2 \rangle\\
&-2N_2|\mathring{A}|^2|\nabla A|^2-2(N_1+N_2|A|^2)|\nabla
\mathring{A}|^2\\
&+2N_2(R_1+{\rm
P}_0)|\mathring{A}|^2+2(N_1+N_2|A|^2)(R_1-\frac{R_2}{n}+{\rm
P}_{\frac{1}{n}}).
\end{split}
\end{equation}

The second term on the right can be estimated as follows.
\begin{equation}\label{21}
\begin{split}
-2N_2\langle \nabla|A|^2, \nabla|\mathring{A}|^2
\rangle\leq&8N_2|A||\nabla A||\mathring{A}||\nabla\mathring{A}|\\
\leq&8N_2C_n|H||\nabla A|^2 \sqrt{C_0}|H|^{1-\frac{\delta}{2}}\\
\leq&8N_2 C_n\sqrt{C_0}|\nabla
A|^2(\varrho^\frac{4}{4-\delta}|H|^2+\varrho^{-\frac{4}{\delta}}),
\end{split}
\end{equation}
where $C_n=\frac{2}{\sqrt{3n}}$, $C_0$ and $\delta$ are as in
Theorem \ref{thm-pinching-|A0|^2}, and $\varrho>0$ is an arbitrary
constant. Using Young's inequality and the pinching assumption, we
have $R_1+{\rm P}_0\leq C(|H|^4+1)$ and $R_1-\frac{R_2}{n}+{\rm
P}_{\frac{1}{n}}\leq C|\mathring{A}|^2(|H|^2+1)+C$, where $C$ is
independent of $t$. These together with (\ref{20}) and (\ref{21})
imply
\begin{equation}\label{22}
\begin{split}
\frac{\partial}{\partial t} ((N_1+N_2|A|^2)|\mathring{A}|^2)\leq&
\Delta ((N_1+N_2|A|^2)|\mathring{A}|^2)\\
&+8N_2 C_n\sqrt{C_0}|\nabla
A|^2(\varrho^\frac{4}{4-\delta}|H|^2+\varrho^{-\frac{4}{\delta}})\\
&-2N_2|\mathring{A}|^2|\nabla A|^2-2(N_1+N_2|A|^2)\Big(\frac{n-1}{2n+1}|\nabla A|^2-C\Big)\\
&+2N_2 |\mathring{A}|^2  (C|H|^4+C)\\
& +2(N_1+N_2|A|^2)(C|\mathring{A}|^2(|H|^2+1)+C) \\
\leq&\Delta
((N_1+N_2|A|^2)|\mathring{A}|^2)\\
&-\Big(\frac{2(n-1)}{n(2n+1)}N_2-8N_2 C_n\sqrt{C_0}
\varrho^\frac{4}{4-\delta} \Big)|H|^2|\nabla A|^2\\
&-\Big(\frac{2(n-1)}{2n+1}N_1  -8N_2 C_n\sqrt{C_0}
\varrho^{-\frac{4}{ \delta}}   \Big)|\nabla A|^2\\
&+(2N_2C+2N_2CC_n) |\mathring{A}|^2|H|^4\\
&+C(N_1,N_2)|H|^4+C(N_1,N_2)|H|^2+C(N_1,N_2).
\end{split}
\end{equation}
Here $C$ is some positive constant and $C(N_1,N_2)$ is some positive
constant depending on $N_1$, $N_2$ and others. Choose $\varrho$ such
that $\frac{(n-1)}{n(2n+1)} =8 C_n\sqrt{C_0}
\varrho^\frac{4}{4-\delta}$. Then (\ref{22}) implies
\begin{equation}\label{23}
\begin{split}
\frac{\partial}{\partial t} ((N_1+N_2|A|^2)|\mathring{A}|^2)\leq&
\Delta
((N_1+N_2|A|^2)|\mathring{A}|^2)\\
&-\frac{(n-1)}{n(2n+1)}N_2 |H|^2|\nabla A|^2\\
&-\Big(\frac{2(n-1)}{2n+1}N_1  -C(N_2)  \Big)|\nabla A|^2\\
&+(2N_2C+2N_2CC_n) |\mathring{A}|^2|H|^4\\
&+C(N_1,N_2)|H|^4+C(N_1,N_2)|H|^2+C(N_1,N_2),
\end{split}
\end{equation}
for some positive constant $C(N_2)$ depending on $N_2$ and others
but independent of $N_1$.

Now consider the function $f=|\nabla
H|^2+(N_1+N_2|A|^2)|\mathring{A}|^2-\eta|H|^4$. Then $f$ satisfies
\begin{equation}\label{24}
\begin{split}
\frac{\partial}{\partial t} f\leq& \Delta f
 -\Big(\frac{(n-1)}{n(2n+1)}N_2 -C -\frac{12}{n}\eta \Big)|H|^2|\nabla A|^2\\
&-\Big(\frac{2(n-1)}{2n+1}N_1  -C(N_2) -C \Big)|\nabla A|^2\\
&+(2N_2C+2N_2CC_n) |\mathring{A}|^2|H|^4\\
&+C(N_1,N_2)|H|^4+(C(N_1,N_2)+C)|H|^2+C(N_1,N_2)\\
&-\frac{2}{n}\eta|H|^6 +C\eta,
\end{split}
\end{equation}
where $C$ is as in (\ref{18}) and (\ref{19}). Now we first choose
$N_2$ large enough such that the gradient term on the first line of
(\ref{24}) is nonpositive. Then we choose $N_1$ large enough such
 that the gradient term on the second line of (\ref{24}) can be
 absorbed. The remained terms can be estimated by using Theorem
 \ref{thm-pinching-|A0|^2} and Young's inequality, which gives
\begin{equation}\label{25}
\begin{split}
\frac{\partial}{\partial t} f\leq& \Delta f+C(N_1,N_2,\eta),
\end{split}
\end{equation}
where $C(N_1,N_2,\eta)$ is some positive constant depending on
${N_1,N_2,\eta}$ and others but independent of $t$. Since the
maximal existence time of the mean curvature flow is finite, we
conclude that $f\leq C_\eta$. Then the theorem follows from the
definition of $f$.

\end{proof}

\section{Contraction to a round point}

In this section we show that as time tends to $T$, the
submanifold will shrink to a single point. If we dilate the metric of the ambient
space by a factor, which is a function of $t$, then the submanifold
will maintain the volume. After a reparameterization of time,
the dilated submanifold converges to a totally umbilical sphere in
the Euclidean space as the reparameterized time tends to infinity.

We need the following lemma.
\begin{lemma}[\cite{XG}]\label{Xu-Gu-ineq}Let $M^n$ be an $n$-dimensional submanifold in an $(n+d)$-dimensional Riemannian
manifold $N^{n+d}$, and $\pi$ a tangent 2-plane on $T_xM$ at point
$x\in M$. Choose an orthonormal two-frame $\{e_1, e_2\}$ at $x$ such
that $\pi = {\rm span}\{e_1, e_2\}$. Then
\begin{equation*}
K(\pi) \geq \frac{1}{2} \Big( 2\bar{K}_{\min} +
\frac{|H|^2}{n-1}-|A|^2 \Big)
+\sum_{\alpha=n+1}^{n+d}\sum_{j>i,(i,j)\neq(1,2)}(h^\alpha_{ij})^2.
\end{equation*}
\end{lemma}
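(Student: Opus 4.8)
The plan is to obtain the estimate pointwise at $x$ from the Gauss equation together with two elementary algebraic inequalities, with no flow or analytic input needed. First I would complete $\{e_1,e_2\}$ to an orthonormal basis $\{e_1,\dots,e_n\}$ of $T_xM$ and apply the Gauss equation to the $2$-plane $\pi=\mathrm{span}\{e_1,e_2\}$, using the symmetry of each $h^\alpha$:
\[
K(\pi)=R_{1212}=\bar R_{1212}+\sum_{\alpha}\bigl(h^\alpha_{11}h^\alpha_{22}-(h^\alpha_{12})^2\bigr).
\]
Since $\bar R_{1212}$ is the sectional curvature of $\pi$ regarded as a $2$-plane in $N$, we have $\bar R_{1212}\ge \bar K_{\min}$, so it remains to bound $\sum_\alpha h^\alpha_{11}h^\alpha_{22}$ from below in terms of $|H|^2$, $|A|^2$ and the off-diagonal components $h^\alpha_{ij}$.

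For each fixed $\alpha$ I would complete the square, writing
\[
h^\alpha_{11}h^\alpha_{22}=\frac12(h^\alpha_{11}+h^\alpha_{22})^2-\frac12(h^\alpha_{11})^2-\frac12(h^\alpha_{22})^2 ,
\]
and then apply the Cauchy--Schwarz inequality to the $(n-1)$-term expression $H^\alpha=(h^\alpha_{11}+h^\alpha_{22})+\sum_{i\ge 3}h^\alpha_{ii}$, which gives $(h^\alpha_{11}+h^\alpha_{22})^2\ge \frac{(H^\alpha)^2}{n-1}-\sum_{i\ge 3}(h^\alpha_{ii})^2$. Substituting the second inequality into the first and summing over $\alpha$ yields
\[
\sum_{\alpha} h^\alpha_{11}h^\alpha_{22}\ \ge\ \frac{|H|^2}{2(n-1)}-\frac12\sum_{\alpha,i}(h^\alpha_{ii})^2 .
\]

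Finally I would rewrite the diagonal sum as $\sum_{\alpha,i}(h^\alpha_{ii})^2=|A|^2-2\sum_\alpha(h^\alpha_{12})^2-2\sum_\alpha\sum_{j>i,\,(i,j)\ne(1,2)}(h^\alpha_{ij})^2$ and plug everything back into the Gauss identity; the two copies of $\sum_\alpha(h^\alpha_{12})^2$ cancel, and what remains is precisely
\[
K(\pi)\ \ge\ \bar K_{\min}+\frac{|H|^2}{2(n-1)}-\frac12|A|^2+\sum_{\alpha}\sum_{j>i,\,(i,j)\ne(1,2)}(h^\alpha_{ij})^2 ,
\]
which is the claimed inequality. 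There is no genuine obstacle in this argument; it is purely algebraic and local. The only points that require care are the combinatorial bookkeeping of the off-diagonal terms (each unordered pair $j>i$ contributes twice to $|A|^2$, and the distinguished pair $(1,2)$ must be separated out from the rest) and applying Cauchy--Schwarz to exactly the grouping of $n-1$ summands that makes the constant $\tfrac{1}{n-1}$ sharp.
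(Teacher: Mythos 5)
Your argument is correct and complete: the Gauss equation for $R_{1212}$, the identity $h^\alpha_{11}h^\alpha_{22}=\tfrac12(h^\alpha_{11}+h^\alpha_{22})^2-\tfrac12(h^\alpha_{11})^2-\tfrac12(h^\alpha_{22})^2$, the Cauchy--Schwarz bound on the $(n-1)$ grouped diagonal terms, and the bookkeeping of off-diagonal entries (with the cancellation of the two $\sum_\alpha(h^\alpha_{12})^2$ contributions) all check out. The paper itself gives no proof, citing \cite{XG}; your derivation is exactly the standard algebraic argument used there, so there is nothing further to compare.
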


In Lemma \ref{Xu-Gu-ineq}, $K(\pi) $ is the sectional curvature of
$M$ for the 2-plane $\pi$, and $\bar{K}_{\min}$ is the minimum of
the sectional curvature of $N$ at point $x$. From our assumption and
$a_\varepsilon<\frac{1}{n-1}$,  we see that the sectional curvature
$K_M$ of the evolving submanifold satisfies $K_M\geq
\epsilon^2|H|^2>0$ for some constant $\epsilon$ provided we choose
$b_0=\max \{b_1, 2 {K}_1\}$. Note that $b_0=0$ if $K_1+K_2=0$. With
the same argument as in \cite{Andrews-Baker} we have
$\frac{|H|_{\max}}{|H|_{\min}}\rightarrow 1$ and ${\rm
diam}M\rightarrow 0$ as $t\rightarrow T$. Hence the submanifold
shrinks to a single point $P\in N$ along the mean curvature flow.

To see that the evolving submanifold becomes spherical, we dilate
the metric of the ambient space such that the submanifold with the
induced metric by the immersion has fixed volume along the flow. Let
$\psi$ be a function of $t$ satisfying
\begin{equation*}
\psi^{-1}\frac{d\psi}{dt}=\frac{1}{n}\frac{\int_{M_t}|H|^2d\mu_t}{\int_{M_t}d\mu_t}:=\frac{\hbar}{n}.
\end{equation*}

Let $h$ be the Riemannian metric on $N$. Now we dilate the metric
$h$ such that $(N, \psi(t)^2 h)$, $t\in [0,T)$ is a family of
Riemannian manifolds. Let $\tilde{g}(t)$ be the induced metric on
the submanifold $M$ from $(N, \psi(t)^2 h)$ by the immersion $F_t$.
We denote by $(\tilde{M},\tilde{g}(t))$ the dilated submanifold with
the isometric immersion $\tilde{F}_t$, where $\tilde{M}=M$ and
$\tilde{F}_t=F_t$.
 We also have the following relations.
 \begin{lemma}\label{dilation-equalities}
 \begin{equation*}\begin{split}
\tilde{A}&=  A, \\
\tilde{H}&=\psi^{-2}H,\\
|\tilde{A}|^2&=\psi^{-2}|A|^2,\\
|\tilde{H}|^2&=\psi^{-2}|H|^2,\\
d\tilde{\mu}_{\tilde{g}(t)}&=\psi^n d\mu_{g(t)}, \\
\tilde{\nabla}&=\nabla,\\
\tilde{\Delta}&=\psi^{-2}\Delta.
 \end{split}
 \end{equation*}
 \end{lemma}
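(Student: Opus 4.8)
The plan is to use the fact that at each fixed time $t$ the rescaling $h\mapsto\psi(t)^2h$ is a \emph{homothety} of the ambient metric, i.e.\ multiplication by a positive constant, and that homotheties interact with all the relevant geometric quantities in a completely transparent way. First I would record two elementary facts: a constant rescaling of a Riemannian metric does not change its Levi-Civita connection (the Christoffel symbols are unchanged), and it does not alter which linear subspaces of $T_{F(x)}N$ are mutually orthogonal. Since $\tilde F_t=F_t$ as maps, the tangent space $T_xM$ and hence the normal space $N_xM$ are the \emph{same} subspaces of $T_{F(x)}N$ as before, and the tangential and normal projections are unchanged. Because $A(X,Y)=(\bar\nabla_XY)^\bot$ and $\nabla_XY=(\bar\nabla_XY)^\top$ are assembled solely from $\bar\nabla$ and these projections, this immediately yields $\tilde A=A$ (as a section of $T^\ast M\otimes T^\ast M\otimes NM$) and $\tilde\nabla=\nabla$; equivalently, $\tilde\nabla$ is the Levi-Civita connection of $\tilde g=\psi^2g$, which coincides with that of $g$ since $\psi$ is spatially constant.

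The remaining identities are then obtained by tracking the single scaling factor $\psi$ through the definitions. The induced metric is $\tilde g=\psi^2g$, so $\tilde g^{ij}=\psi^{-2}g^{ij}$, while the fibre metric on $NM$ inherited from $\psi^2h$ is $\psi^2$ times the one inherited from $h$. Hence $\tilde H=\mathrm{tr}_{\tilde g}\tilde A=\psi^{-2}\,\mathrm{tr}_g A=\psi^{-2}H$; taking norms gives $|\tilde A|^2=\tilde g^{ik}\tilde g^{jl}\langle\tilde A_{ij},\tilde A_{kl}\rangle_{\psi^2h}=\psi^{-2}\cdot\psi^{-2}\cdot\psi^2\,|A|^2=\psi^{-2}|A|^2$ and $|\tilde H|^2=\psi^2\,|\psi^{-2}H|^2=\psi^{-2}|H|^2$. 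Since $\det\tilde g=\psi^{2n}\det g$, we get $d\tilde\mu_{\tilde g(t)}=\psi^n\,d\mu_{g(t)}$, and using $\tilde\nabla=\nabla$ we get $\tilde\Delta=\tilde g^{ij}\tilde\nabla_i\tilde\nabla_j=\psi^{-2}g^{ij}\nabla_i\nabla_j=\psi^{-2}\Delta$.

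There is no real obstacle in this lemma; the only point requiring care is that $\psi$ depends on $t$, so every identity above is to be understood at a fixed time slice, and one must resist the temptation to differentiate them in $t$ as if $\psi$ were constant. Subject to that caveat, the proof is a routine bookkeeping of homothety scaling.
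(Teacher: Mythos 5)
Your proof is correct and follows essentially the same route as the paper's: both reduce everything to tracking the single homothety factor $\psi$ through the definitions, with the only cosmetic difference that you obtain $\tilde{A}=A$ conceptually from the invariance of the ambient Levi-Civita connection and of the normal projection under constant rescaling, while the paper verifies it componentwise via $\tilde{h}^\alpha_{ij}=\psi h^\alpha_{ij}$ and $\tilde{e}_\alpha=\psi^{-1}e_\alpha$. Your caveat about $\psi$ being time-dependent is well taken and is exactly why the paper treats the time derivative of $\tilde{g}_{ij}$ separately afterwards.
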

\begin{proof}
Let $\{x^i\}$ be a local coordinate system on $M$.
 For the induced metric, we have the following
\begin{equation*}\begin{split}
\tilde{g}_{ij}&=\bigg\langle \frac{\partial\tilde{ F}}{\partial
x^i},\frac{\partial\tilde{ F}}{\partial x^j}
\bigg\rangle_{\psi^2h}\\
& =\bigg\langle \frac{\partial {
F}}{\partial x^i},\frac{\partial {
F}}{\partial x^j} \bigg\rangle_{\psi^2h}\\
&=\psi^2\bigg\langle \frac{\partial { F}}{\partial
x^i},\frac{\partial { F}}{\partial x^j}
\bigg\rangle_{h}\\
&=\psi^2g_{ij}.
\end{split}
\end{equation*}

For the second fundamental form, noting that
$\tilde{e}_\alpha=\psi^{-1} {e}_\alpha$, we have
\begin{equation*}
\begin{split}
\tilde{h}^\alpha_{ij}&=-\bigg\langle  \frac{\partial^2 \tilde{
F}}{\partial x^i
\partial x^j} , \tilde{e}_\alpha \bigg\rangle_{\psi^2 h}\\
&=-\psi^2 \bigg\langle  \frac{\partial^2  { F}}{\partial x^i
\partial x^j} , \psi^{-1} {e}_\alpha \bigg\rangle_{ h}\\
&=\psi h^\alpha_{ij}. \end{split}
\end{equation*}
Hence \begin{equation*}\tilde{h}_{ij}=\sum_\alpha
\tilde{h}_{ij}^\alpha \tilde{e}_\alpha=\sum_\alpha \psi
{h}_{ij}^\alpha \psi^{-1}{e}_\alpha=h_{ij},\end{equation*}
 which means
$\tilde{A}=A$.

The mean curvature is the trace of the second fundamental form, so
\begin{equation*} \tilde{H}=\tilde{g}^{ij}\tilde{h}_{ij}=\psi^{-2}g^{ij}
{h}_{ij}=\psi^{-2}H.
\end{equation*}

For the squared norm of the second fundamental form and the mean
curvature, we have
\begin{equation*}\begin{split}
|\tilde{A}|^2&= \tilde{g}^{ik}\tilde{g}^{kl}\tilde{h}_{ij}^\alpha
\tilde{h}_{ij}^\beta \langle \tilde{e}_\alpha,\tilde{e}_\beta
\rangle_{\psi^2 h}\\
&=\psi^{-2}{g}^{ik} {g}^{kl} {h}_{ij}^\alpha
 {h}_{ij}^\beta \langle e_\alpha,e_\beta \rangle_{
 h}\\
 &=\psi^{-2}|A|^2,
 \end{split}
\end{equation*}
\begin{equation*}
\begin{split}
|\tilde{H}|^2&=\langle \tilde{H},\tilde{H} \rangle_{\psi^2
h}\\
&=\psi^2\langle \psi^{-2}H,\psi^{-2}H \rangle_{h}\\
&=\psi^{-2}|H|^2.
\end{split}
\end{equation*}

The relation of the volume forms follow from $\tilde{g}=\psi^2 g$.
The Christoffel symbols are scale invariant and thus the connection
is also scale invariant. Finally, from the definition of the
Laplacian and the scalar invariance of the connection, we see that
$\tilde{\Delta}=\psi^{-2}\Delta.$
\end{proof}

 It is easy to check that
\begin{equation*}\frac{d}{d t} \int_{\tilde{M}}d\tilde{\mu}_{\tilde{g}(t)}=0,
\end{equation*}
which means that the volume of the dilated submanifold is fixed as
$t$ tends to $T$.

Now we define the rescaled time variable $\tilde{t}$ by
\begin{equation*}
\tilde{t}(t)=\int_{0}^t\psi^2(\tau)d\tau.
\end{equation*}
So $\frac{d \tilde{t}}{dt}=\psi^2.$

For $0\leq \tilde{t}<\tilde{T}=\tilde{t}(T)$, we first has the
following estimates.
\begin{equation}\label{26}
|\tilde{A}|^2\leq C_n|\tilde{H}|^2,
\end{equation}
\begin{equation}\label{27}
\frac{|\tilde{H}|_{\min}}{|\tilde{H}|_{\max}}\rightarrow 1 \ as\
\tilde{t}\rightarrow \tilde{T},
\end{equation}
\begin{equation}\label{28}
\tilde{K}_{\min}\geq \epsilon^2 |\tilde{H}|^2.
\end{equation}

For the induced metric we have the following evolution equation.
\begin{equation}
\begin{split}
\frac{\partial}{\partial
\tilde{t}}\tilde{g}_{ij}&=\psi^{-2}\frac{\partial}{\partial
t}\tilde{g}_{ij}\\
&=\psi^{-2}\Big(\frac{\partial}{\partial t}\psi^2 g_{ij}+\psi^2
\frac{\partial}{\partial t}g_{ij} \Big)\\
&=\psi^{-2}\Big( 2\psi^2 \frac{\hbar}{n}g_{ij}-\psi^22H\cdot h_{ij}
\Big)\\
&=-2H\cdot h_{ij}+\frac{2}{n}\hbar g_{ij}\\
&=-2\tilde{H} \tilde{\cdot} \tilde{h}_{ij}+\frac{2}{n}\tilde{\hbar}
\tilde{g}_{ij}.
\end{split}
\end{equation}
Here $\tilde{\cdot}$ denotes the inner product with respect to the
metric $\psi^2 h$. The volume form $d\tilde{\mu}_{\tilde{t}}$
satisfies the following equation.
\begin{equation}\label{29}
\frac{\partial}{\partial
\tilde{t}}d\tilde{\mu}_{\tilde{t}}=(\tilde{\hbar}-|\tilde{H}|^2)d\tilde{\mu}_{\tilde{t}}.
\end{equation}

\begin{prop}\label{mean-curvature-bound}
$0<C_{\min}\leq |\tilde{H}|_{\min} \leq |\tilde{H}|_{\max}\leq
C_{\max}<\infty$ holds for  $0\leq \tilde{t}<\tilde{T}$.
\end{prop}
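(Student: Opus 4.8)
The plan is to bound $|\tilde H|$ from above and below along the rescaled flow by exploiting the fixed-volume normalization together with the pinching estimates \eqref{26}--\eqref{28}. First I would record that, since $\tilde K_{\min}\geq \epsilon^2|\tilde H|^2$ and the dilated submanifold has sectional curvature bounded below by a positive multiple of $|\tilde H|^2$, the intrinsic diameter of $\tilde M$ is controlled in terms of $|\tilde H|_{\min}$: by a Bonnet--Myers type argument, $\operatorname{diam}(\tilde M,\tilde g)\leq \pi/(\epsilon\,|\tilde H|_{\min})$. On the other hand the volume $V:=\int_{\tilde M}d\tilde\mu_{\tilde t}$ is constant in $\tilde t$ (this was verified above via the equation for $d\tilde\mu$). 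Combining the diameter bound with the fixed volume and a standard volume comparison (or simply the fact that a metric ball of radius equal to the diameter covers $\tilde M$, together with the Bishop--Gromov upper volume bound using the lower Ricci bound $\operatorname{Ric}\geq (n-1)\epsilon^2|\tilde H|_{\min}^2$) yields
\[
V \;\leq\; \mathrm{Vol}\big(B_{\operatorname{diam}}\big)\;\leq\; c(n)\,\big(\epsilon\,|\tilde H|_{\min}\big)^{-n},
\]
hence $|\tilde H|_{\min}\leq c(n)\,\epsilon^{-1}V^{-1/n}=:C'$. To get the matching \emph{lower} bound on $|\tilde H|_{\min}$, I would instead use \eqref{27}: since $|\tilde H|_{\max}/|\tilde H|_{\min}\to 1$, it suffices to bound $|\tilde H|_{\max}$ from below, and if $|\tilde H|_{\max}$ were small then by \eqref{26} $|\tilde A|^2\leq C_n|\tilde H|^2$ would be small everywhere, making $\tilde M$ nearly totally geodesic; but a closed submanifold of a space with sectional curvature bounded above by $\psi^{-2}K_2\to\infty$ cannot have arbitrarily small second fundamental form while enclosing fixed volume — more directly, integrating the Gauss equation and using $\tilde K_{\min}\geq\epsilon^2|\tilde H|^2$ forces $\int_{\tilde M}|\tilde H|^2\,d\tilde\mu\geq c(n,\epsilon)V$, so $|\tilde H|_{\max}^2\geq c(n,\epsilon)$, and then \eqref{27} upgrades this to $|\tilde H|_{\min}\geq C_{\min}>0$ for $\tilde t$ near $\tilde T$ (and on the compact complementary $\tilde t$-interval the bound is automatic by smoothness and positivity of $|H|$, which was established from the preserved pinching).

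Symmetrically, the upper bound $|\tilde H|_{\max}\leq C_{\max}$ follows: again by \eqref{27} it is enough to bound $|\tilde H|_{\min}$ from above, which is exactly the Bishop--Gromov estimate of the first paragraph. Thus for $\tilde t$ sufficiently close to $\tilde T$ both bounds hold, and on the remaining compact $\tilde t$-interval $[0,\tilde t_0]$ they hold because $\tilde F_{\tilde t}=F_t$ is a smooth family of immersions with $|H|^2$ bounded above (by $n|A|^2/(n-1)$ from the pinching and below by a positive constant as shown in Section~4), while $\psi$ is a positive smooth function on $[0,t_0]$, so $|\tilde H|^2=\psi^{-2}|H|^2$ is bounded between positive constants there. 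Taking the minimum and maximum of the two sets of constants gives the uniform bounds $0<C_{\min}\leq|\tilde H|_{\min}\leq|\tilde H|_{\max}\leq C_{\max}<\infty$ on all of $[0,\tilde T)$.

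The main obstacle I anticipate is making the diameter/volume comparison fully rigorous with the correct constant: one must be careful that $\tilde K_{\min}$ in \eqref{28} refers to the minimum of the sectional curvature of $\tilde M$ (not of the ambient $N$), so that Bonnet--Myers applies to $(\tilde M,\tilde g)$ directly, and that the lower bound is by $\epsilon^2|\tilde H|_{\min}^2$ uniformly. A secondary subtlety is the transition between the ``near $\tilde T$'' regime, where \eqref{26}--\eqref{28} are the only tools available, and the initial compact regime, where one instead invokes smoothness and the estimates of Sections~3--4 pulled back through the scaling relations of Lemma~\ref{dilation-equalities}; one should check these two arguments overlap, which they do since the ``near $\tilde T$'' arguments in fact work whenever \eqref{26}--\eqref{28} hold, i.e.\ on all of $[0,\tilde T)$ once $b_0$ is chosen as in Section~6.
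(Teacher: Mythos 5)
Your upper bound argument (Bonnet--Myers diameter bound from $\tilde{K}_{\min}\geq\epsilon^2|\tilde{H}|_{\min}^2$, Bishop--Gromov, fixed volume, then (\ref{27}) to pass from $|\tilde{H}|_{\min}$ to $|\tilde{H}|_{\max}$) is exactly the paper's argument and is fine. The lower bound is where there is a genuine gap. Your key step is the claim that ``integrating the Gauss equation and using $\tilde K_{\min}\geq\epsilon^2|\tilde H|^2$ forces $\int_{\tilde M}|\tilde H|^2\,d\tilde\mu\geq c(n,\epsilon)V$.'' This does not follow: the hypothesis $\tilde K\geq\epsilon^2|\tilde H|^2$ degenerates exactly in the regime you are trying to exclude (if $|\tilde H|$ is uniformly tiny it only says $\tilde K\geq$ tiny, which is consistent with a nearly flat closed manifold of fixed volume), and the Gauss equation with small $|\tilde A|$ and small ambient curvature likewise yields no lower bound on $|\tilde H|^2$ --- it only reproduces information you already have. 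The heuristic that a nearly totally geodesic closed submanifold cannot enclose fixed volume is a \emph{global} statement about immersions into (nearly) Euclidean space, and to invoke it you must first know that the rescaled ambient spaces actually converge to Euclidean space, i.e.\ that $\psi\to\infty$ and $\tilde T=\infty$; nothing in your argument establishes this. (Also, $\psi^{-2}K_2\to 0$, not $\to\infty$, as you wrote.)

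The paper's proof of the lower bound supplies precisely this missing mechanism, and it is the bulk of the proposition's proof. Arguing by contradiction from $|\tilde H|_{\max}\to 0$, it uses the finite maximal existence time and the blow-up of $|A|$ (Lemma \ref{17}) together with the evolution inequality $\partial_t|H|^2\leq\Delta|H|^2+C|H|^2_{\max}|H|^2$ and Hamilton's criterion to conclude $\int_0^T|H|^2_{\max}\,dt=\infty$; the asymptotic pinching $|H|_{\max}/|H|_{\min}\to 1$ then gives $\int_0^T|H|^2_{\min}\,dt=\infty$, hence $\int_0^{\tilde T}\tilde\hbar\,d\tilde t=\int_0^T\hbar\,dt=\infty$. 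Since $\tilde\hbar\leq|\tilde H|^2_{\max}\leq C_{\max}^2$ by the already-established upper bound, this forces $\tilde T=\infty$ and $\psi(t)\to\infty$ as $t\to T$, so $(N,\psi^2h,P)$ converges to flat $\mathbb{R}^{n+d}$ and the immersions, having $|\tilde A|^2_{\max}\to 0$ by (\ref{26}), would converge to a totally geodesic immersion of Euclidean space --- contradicting the fixed positive volume of $\tilde M_{\tilde t}$. You need to either reproduce this chain of reasoning or find a rigorous substitute for your integral inequality; as written, the lower bound in your proposal is unproven.
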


\begin{proof}
From (\ref{28}), the sectional curvature of $\tilde{M}_{\tilde{t}}$
is nonnegative. The Bishop-Gromov volume comparison theorem implies
that ${\rm Vol} (\tilde{M}_{\tilde{t}})\leq C \tilde{d}^n$, where
$\tilde{d}$ is the diameter of $\tilde{M}_{\tilde{t}}$. From the
Bonnet theorem, we also have $\tilde{d}\leq
\frac{\pi}{\sqrt{|\tilde{H}|_{\min}}}$. On the other hand, since
${\rm Vol}(\tilde{M}_{\tilde{t}})={\rm Vol}(\tilde{M}_{{0}})$, we
have $|\tilde{H}|_{\min}\leq C$, and then (\ref{27}) implies
$|\tilde{H}|_{\max}\leq C_{\max}$.

If we can show that $|\tilde{H}|_{\max}\geq C>0$, then  (\ref{27})
implies $|\tilde{H}|_{\min}\geq C_{\min}>0$ for all $\tilde{t}\in
[0,\tilde{T})$. Suppose  $|\tilde{H}|_{\max}\rightarrow 0$ as
$\tilde{t}\rightarrow\tilde{T} $, (\ref{26}) implies that
$|\tilde{A}|_{\max}^2\rightarrow 0$ as
$\tilde{t}\rightarrow\tilde{T}$.   Since $|H|^2$ satisfies
\begin{equation*}
\frac{\partial}{\partial t} |H|^2\leq \Delta  |H|^2
+C|H|^2_{\max}|H|^2,
\end{equation*}
we can follow the argument in \cite{Ha} to show that $\int_0^T
|H|^2_{\max}dt=\infty$. On the other hand, since
$\frac{|H|_{\max}}{|H|_{\min}}\rightarrow 1$  as $t\rightarrow T$,
then for a   $\varsigma>0$£¬ there is a positive constant $\delta
>0$ such that $\frac{|H|_{\max}}{|H|_{\min}}\leq 1+\varsigma$ for
all $t\in [\delta,T)$.  So
\begin{equation*}
\begin{split}\infty&=
\frac{1}{(1+\varsigma)^2}\int_0^T|H|^2_{\max}dt\\
&=\frac{1}{(1+\varsigma)^2}\int_0^\varsigma|H|^2_{\max}dt+\frac{1}{(1+\varsigma)^2}\int_\varsigma^T|H|^2_{\max}dt\\
&\leq\frac{1}{(1+\varsigma)^2}\int_0^\varsigma|H|^2_{\max}dt+
\int_\varsigma^T|H|^2_{\min}dt\\
&\leq\frac{1}{(1+\varsigma)^2}\int_0^\varsigma|H|^2_{\max}dt+
\int_0^T|H|^2_{\min}dt.
\end{split}\end{equation*}
This implies $\int_0^T|H|^2_{\min}dt=\infty$ since
$\int_0^\varsigma|H|^2_{\max}dt<\infty$. We also have $\frac{d
\tilde{t}}{dt}=\psi^2$ and $|\tilde{H}|^2=\psi^{-2}|H|^2$, hence
\begin{equation*}
\int_0^{\tilde{T}}\tilde{\hbar}(\tilde{t})d\tilde{t}=\int_0^{{T}}{\hbar}({t})d{t}\geq
\int_0^{{T}}|H|^2_{\min}(t)d{t}=\infty.
\end{equation*}
However, we have $\tilde{\hbar}\leq |\tilde{H}|^2_{\max}\leq
C_{\max}^2$. Therefore $\tilde{T}=\infty$. By the definition of the
rescaling of the time variable,
$\tilde{T}=\tilde{t}(T)=\int_0^T\psi^2(\tau)d\tau$. Since
$T<\infty$, we have $\psi(t)\rightarrow \infty$ as $t\rightarrow T$.
This implies that $(N, \psi^2 h,P)$ converges to the Euclidean space
as $\tilde{t}\rightarrow \infty$. Since we have
$|\tilde{A}|_{\max}^2\rightarrow 0$ as
$\tilde{t}\rightarrow\tilde{T}$, the family of immersions
$\tilde{F}:\, \tilde{M}\rightarrow (N, \psi^2 h)$ for $\tilde{t}\in
[0,\infty)$, will converge to the isometric immersion of an
$n$-dimensional Euclidean space into an $(n+d)$-dimensional
Euclidean space as $\tilde{t}\rightarrow \infty$. This is a
contradiction since the volume of $\tilde{M}_{\tilde{t}}$ is
unchanged along the flow. This competes the proof of the
proposition.
\end{proof}

Now we look at the tracefree second fundamental form
$\mathring{\tilde{A}}$ of the immersion $\tilde{F}$. From Theorem
\ref{thm-pinching-|A0|^2}, we have the following inequality
\begin{equation*}
|\mathring{\tilde{A}}|^2\leq C_0 \psi^{-\delta}
|\tilde{H}|^{2-\delta},
\end{equation*}
which holds for all $\tilde{t}\in [0,\tilde{T})$. As in the proof of
Proposition \ref{mean-curvature-bound}, we have $\tilde{T}=\infty$
and $\psi(\tilde{t})\rightarrow \infty$ as
$\tilde{t}\rightarrow\infty$. Hence there holds
\begin{equation*}|\mathring{\tilde{A}}|^2\rightarrow 0,
\ \ \frac{|\tilde{A}|^2}{|\tilde{H}|^2}-\frac{1}{n} \rightarrow 0\ \
\textrm{as} \ \ \tilde{t}\rightarrow \infty.\end{equation*} Since
$(N, \psi^2(\tilde{t}) h,P)\rightarrow (\mathbb{R}^{n+d},
\delta_{AB},0)$ in $C^\infty_{loc}$-topology, we see that
$\tilde{F}(\tilde{t})$ tends to a totally umbilical immersion from a
standard sphere with the same volume as $M_0$ into the Euclidean
space at least in the $C^0$-topology.

To see that the convergence is in $C^\infty$-topology, we only need
to show that all the covariant derivatives of the second fundamental
form are uniformly bounded. The second fundamental form $\tilde{A}$
is uniformly bounded by (\ref{26}) since the mean curvature is
uniformly bounded.

Along the mean curvature flow, $M_t$ will stay in a compact region
of $N$, say, $M_t\subset B_{h}(P,r)$ for $t\in [0,T)$ and a suitable
$r>0$. After the dilation, $\tilde{M}_{\tilde{t}}\subset
B_{h}(P,r)$, where $B_{h}(P,r)$ is in fact equal to
$B_{\psi^2(\tilde{t})h}(P,\psi(\tilde{t})r)$ as a set. By the
definition of the dilation, the Riemannian curvature and its
covariant derivations of the ambient space, when restricted to
$\tilde{M}_{\tilde{t}}$, are uniformly bounded by constants
independent of $\tilde{t}$.

\begin{lemma}\label{dilation-evo}
Let $P$ and $Q$ be two quantities depending on $g$ and $A$, and $P$
satisfies the evolution equation $\frac{\partial P}{\partial
t}=\Delta P+Q$ along the mean curvature flow. If $P$ has degree
$\alpha$, that is, $\tilde{P}=\psi^\alpha P$, then $Q$ has degree
$\alpha-2$ and after dilation,  $\tilde{P}$ satisfies
\begin{equation*}
\frac{\partial \tilde{P}}{\partial
\tilde{t}}=\tilde{\Delta}\tilde{P}+\tilde{Q}+\frac{\alpha}{n}\tilde{\hbar}\tilde{P}.
\end{equation*}
\end{lemma}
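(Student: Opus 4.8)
The plan is to track how each object transforms under the conformal dilation of the ambient metric by the time-dependent factor $\psi(t)^2$, and then to apply the chain rule for the reparameterized time $\tilde t$ with $d\tilde t/dt=\psi^2$. First I would record the scaling behavior already established in Lemma \ref{dilation-equalities}: the connection and hence $\tilde\nabla$ are scale-invariant, while $\tilde\Delta=\psi^{-2}\Delta$, and $d\tilde\mu_{\tilde t}=\psi^n d\mu_t$. The hypothesis that $P$ has degree $\alpha$ means $\tilde P=\psi^\alpha P$. Because $P$ is built from $g$ and $A$ (which scale with degrees $2$ and $0$ respectively) and $\frac{\partial P}{\partial t}=\Delta P+Q$ with $\Delta$ carrying degree $-2$, the quantity $Q=\frac{\partial P}{\partial t}-\Delta P$ must itself be homogeneous of degree $\alpha-2$, i.e. $\tilde Q=\psi^{\alpha-2}Q$; I would state this as a consequence of dimensional homogeneity (or, if one prefers, verify it on the explicit polynomial expressions for the relevant $Q$'s).

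Next I would differentiate $\tilde P=\psi^\alpha P$ with respect to $\tilde t$. Using $\frac{\partial}{\partial\tilde t}=\psi^{-2}\frac{\partial}{\partial t}$ and the defining relation $\psi^{-1}\frac{d\psi}{dt}=\frac{\hbar}{n}$, which gives $\frac{\partial}{\partial t}\psi^\alpha=\frac{\alpha}{n}\hbar\,\psi^\alpha$, we compute
\begin{equation*}
\frac{\partial\tilde P}{\partial\tilde t}=\psi^{-2}\Big(\frac{\alpha}{n}\hbar\,\psi^\alpha P+\psi^\alpha\frac{\partial P}{\partial t}\Big)=\psi^{\alpha-2}\frac{\partial P}{\partial t}+\frac{\alpha}{n}\hbar\,\psi^{\alpha-2}P.
\end{equation*}
Then I substitute $\frac{\partial P}{\partial t}=\Delta P+Q$, so that $\psi^{\alpha-2}\frac{\partial P}{\partial t}=\psi^{\alpha-2}\Delta P+\psi^{\alpha-2}Q$. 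The first term equals $\psi^{-2}\Delta(\psi^\alpha P)=\tilde\Delta\tilde P$ since $\psi$ depends only on $t$ and the spatial Laplacian scales by $\psi^{-2}$; the second term is $\tilde Q$ by the degree computation above; and $\psi^{\alpha-2}P=\psi^{-2}\tilde P$, while $\hbar\,\psi^{-2}=\tilde\hbar$ because $\tilde H=\psi^{-2}H$ and $d\tilde\mu=\psi^n d\mu$ give $\tilde\hbar=\frac{\int|\tilde H|^2 d\tilde\mu}{\int d\tilde\mu}=\psi^{-2}\hbar$. Collecting these yields exactly
\begin{equation*}
\frac{\partial\tilde P}{\partial\tilde t}=\tilde\Delta\tilde P+\tilde Q+\frac{\alpha}{n}\tilde\hbar\,\tilde P.
\end{equation*}

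The only genuinely delicate point is the claim that $Q$ automatically has degree $\alpha-2$; everything else is bookkeeping with the chain rule. I expect to justify this either by the homogeneity argument (every term in $Q$ is a contraction of $g$, $g^{-1}$, $A$, and ambient curvature terms with a fixed net scaling weight forced by that of $P$ and $\Delta P$), or, to be safe, by noting that in all the concrete applications later in the paper the evolution equations come with explicitly polynomial right-hand sides whose scaling can be read off directly. I would phrase the lemma's proof around the first, cleaner argument, remarking that the ambient curvature contributions scale appropriately because the dilation is by a constant-in-space factor. This completes the proof.
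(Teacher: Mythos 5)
Your proof is correct and follows exactly the computation the paper has in mind: the paper's own ``proof'' is a one-line reference to Lemma 9.1 of Huisken's paper \cite{H1}, whose argument is precisely your chain-rule bookkeeping with $\partial/\partial\tilde t=\psi^{-2}\partial/\partial t$, $\psi^{-1}\psi'=\hbar/n$, $\tilde\Delta\tilde P=\psi^{\alpha-2}\Delta P$, and $\tilde\hbar=\psi^{-2}\hbar$, together with the parabolic-scaling observation that $Q=\partial_tP-\Delta P$ has degree $\alpha-2$. No gaps; you have simply written out the computation the authors chose to cite.
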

\begin{proof}
Using the similar computation as in the proof of Lemma 9.1 in
\cite{H1}, we can prove the lemma.
\end{proof}

We also need the following interpolation inequalities.
\begin{lemma}\label{interpolation}
Let $T$ be any tensor and $m$ an integer. There is a constant
$C(n,m)$ independent of the metric and the connection such that\\
\begin{equation*}\int_{M} |\nabla^iT|^{\frac{2m}{i}}d\mu\leq C\cdot \max_{M}|T|^{\frac{2m}{i}-2}\int_{M} |\nabla^mT|^2d\mu\end{equation*}
holds for any $1\leq i\leq m-1$, and
\begin{equation*}
\int_{M}|\nabla^iT|^2d\mu\leq C\bigg(\int_{M}|\nabla^mT|^2d\mu
\bigg)^{\frac{i}{m}}\bigg(\int_{M}|T|^2d\mu \bigg)^{1-\frac{i}{m}}
\end{equation*} holds for any $0\leq i\leq m$.
\end{lemma}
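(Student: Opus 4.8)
The plan is to derive both inequalities by the classical integration–by–parts technique of Hamilton \cite{Ha} (see also \cite{H1}). Since $M$ is closed the divergence theorem produces no boundary terms, and — this is the point that makes the estimate useful in Section~6 — every bound below rests only on the pointwise algebra of tensor norms plus Cauchy--Schwarz/H\"older, never on a curvature bound, so the constants are universal ones depending on $n$ and $m$ alone and not on the (evolving) metric or connection.

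\emph{The prototype estimate.} First I would prove
\begin{equation*}
\int_M |\nabla T|^4\, d\mu \le (2+\sqrt n)^2\,\Big(\max_M|T|\Big)^2 \int_M|\nabla^2 T|^2\, d\mu .
\end{equation*}
Writing $P=|\nabla T|^2$ and applying the divergence theorem to $V^b=g^{ab}P(\nabla_a T)^I T_I$,
\begin{equation*}
\int_M |\nabla T|^4 \, d\mu = -\int_M \langle \nabla P,\ \langle\nabla T,T\rangle\rangle\, d\mu - \int_M P\,\langle \Delta T,T\rangle\, d\mu ,
\end{equation*}
where $\langle\nabla T,T\rangle$ is the $1$-form contracting $T$ against $\nabla T$ and $\Delta T=\operatorname{tr}_g\nabla^2 T$. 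Using $|\nabla P|\le 2|\nabla^2 T||\nabla T|$, $|\langle\nabla T,T\rangle|\le|\nabla T||T|$ and $|\Delta T|\le\sqrt n\,|\nabla^2 T|$, both terms are at most $(2+\sqrt n)\max_M|T|\int_M|\nabla^2 T|\,|\nabla T|^2$, and Cauchy--Schwarz gives $\int_M|\nabla T|^4\le (2+\sqrt n)\max_M|T|\,\big(\int_M|\nabla^2 T|^2\big)^{1/2}\big(\int_M|\nabla T|^4\big)^{1/2}$; absorbing the last factor yields the claim.

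\emph{The second inequality.} Put $A_i=\int_M|\nabla^i T|^2\,d\mu$. Integrating by parts and using $|\Delta \nabla^{i-1}T|\le\sqrt n\,|\nabla^{i+1}T|$,
\begin{equation*}
A_i = -\int_M \langle \nabla^{i-1}T,\ \Delta(\nabla^{i-1}T)\rangle\, d\mu \le A_{i-1}^{1/2}\Big(\int_M|\Delta\nabla^{i-1}T|^2\, d\mu\Big)^{1/2}\le \sqrt n\, A_{i-1}^{1/2}A_{i+1}^{1/2}
\end{equation*}
for $1\le i\le m-1$, so $A_i^2\le n\,A_{i-1}A_{i+1}$. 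Hence $i\mapsto \log A_i + \frac{1}{2}(\log n)\,i^2$ is convex on $\{0,\dots,m\}$; comparing its value at $i$ with the chord joining $i=0$ and $i=m$ gives $A_i\le n^{\,i(m-i)/2}A_0^{\,1-i/m}A_m^{\,i/m}$, which is the asserted inequality with, say, $C(n,m)=n^{m^2/8}$ (the cases $i=0,m$ being trivial; if some $A_i$ vanishes the inequality is immediate).

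\emph{The first inequality in general.} This follows by induction on $m$: the base case $m=2$ is the prototype estimate above, and in the inductive step one integrates by parts the integral $\int_M|\nabla^i T|^{2m/i}$ in the same manner, controlling the resulting terms by H\"older's inequality together with the induction hypothesis and the $L^2$-interpolation just established, arranging the exponents so that the total power of $\max_M|T|$ is $2m/i-2$ and the highest derivative that survives is $\nabla^m T$ in $L^2$. I expect the only real work here to be this exponent bookkeeping in the induction; the analytic input remains the elementary combination of the divergence theorem on the closed manifold $M$, Cauchy--Schwarz and H\"older, so the constant stays a universal $C(n,m)$ as required. This proves Lemma \ref{interpolation} and, combined with the uniform bound on $|\tilde A|$ and Lemma \ref{dilation-evo}, supplies the derivative estimates needed to promote the $C^0$-convergence of $\tilde F(\tilde t)$ to convergence in $C^\infty$.
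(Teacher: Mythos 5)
The paper's ``proof'' of this lemma is a one-line citation to Hamilton \cite{Ha}, and your argument is a correct reconstruction of exactly that source: the $L^2$ interpolation via $A_i^2\le n\,A_{i-1}A_{i+1}$ (integration by parts on the closed $M$, $|\Delta S|\le\sqrt n\,|\nabla^2S|$) followed by discrete log-convexity is complete and yields the stated constant, and the prototype $\int_M|\nabla T|^4\,d\mu\le(2+\sqrt n)^2(\max_M|T|)^2\int_M|\nabla^2T|^2\,d\mu$ is likewise correct. The one piece you leave as ``exponent bookkeeping'' --- the first inequality for general $1\le i\le m-1$ --- is precisely Hamilton's Corollary 12.6; its induction (apply the $L^{2p}$ version of your prototype to the tensor $\nabla^{i-1}T$ and combine with the interpolation already proved) is routine but is the one step you would still need to write out for a self-contained proof; no new idea is required and nothing in your outline would fail.
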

\begin{proof}
These interpolation inequalities were proved in \cite{Ha}.
\end{proof}

From Lemma \ref{dilation-evo}, we have the following inequality
after dilation.

\begin{equation}\label{evo-derivatives-dilation}
\begin{split}
\frac{\partial}{\partial \tilde{t}}|\tilde{\nabla}^m
\tilde{A}|^2\leq &\tilde{\Delta}|\tilde{\nabla}^m
\tilde{A}|^2-2|\tilde{\nabla} ^{m+1}\tilde{A}|^2\\
&+C(n,m)\Big\{ \sum_{i+j+k=m} |\tilde{\nabla}^i
\tilde{A}||\tilde{\nabla}^j \tilde{A}||\tilde{\nabla}^k
\tilde{A}||\tilde{\nabla}^m \tilde{A}|\\
& +\bar{C}_m \sum_{i\leq m} |\tilde{\nabla}^i
\tilde{A}||\tilde{\nabla}^m \tilde{A}|
+\bar{\bar{C}}_m|\tilde{\nabla}^m \tilde{A}| \Big\}\\
&+\tilde{C}(n,m)\tilde{\hbar}|\tilde{\nabla}^m \tilde{A}|^2.
\end{split}
\end{equation}

To prove that for any integer $m\geq0$, $|\tilde{\nabla}^m
\tilde{A}|^2$ is uniformly bounded on $[0,\infty)$, we argue by
induction on $m$. Obviously, for $m=0$, $|\tilde{A}|^2$ is uniformly
bounded. Suppose that $|\tilde{\nabla}^i \tilde{A}|^2$ is uniformly
bounded on $[0,\infty)$ for $i=1,2,\cdots, m-1$. Then from
(\ref{evo-derivatives-dilation}), we see that there holds
\begin{equation}\label{evo-derivatives-dilation11}
\begin{split}
\frac{\partial}{\partial \tilde{t}}|\tilde{\nabla}^m
\tilde{A}|^2\leq &\tilde{\Delta}|\tilde{\nabla}^m
\tilde{A}|^2-2|\tilde{\nabla} ^{m+1}\tilde{A}|^2+C|\tilde{\nabla}^m
\tilde{A}|^2+C
\end{split}
\end{equation}
for some positive constant $C$ independent of $\tilde{t}$. Here we
have used Young's inequality and the boundedness of $\tilde{\hbar}$
which is implied by that the mean curvature is uniformly bounded and
the area of the submanifold is fixed. From
(\ref{evo-derivatives-dilation11}) we have
\begin{equation}\label{integral-ineq-dilation}\begin{split}
\frac{d}{d \tilde{t}}\int_{\tilde{M}_{\tilde{t}}} |\tilde{\nabla}^m
\tilde{A}|^2 d{\tilde{\mu}}_{\tilde{t}}=&
\int_{\tilde{M}_{\tilde{t}}} \frac{\partial}{\partial
\tilde{t}}|\tilde{\nabla}^m \tilde{A}|^2 d{\tilde{\mu}}_{\tilde{t}}
+\int_{\tilde{M}_{\tilde{t}}} |\tilde{\nabla}^m \tilde{A}|^2
\frac{\partial}{\partial \tilde{t}}d{\tilde{\mu}}_{\tilde{t}}\\
\leq &-2\int_{\tilde{M}_{\tilde{t}}} |\tilde{\nabla}^{m+1}
\tilde{A}|^2
d{\tilde{\mu}}_{\tilde{t}}+C\int_{\tilde{M}_{\tilde{t}}}
|\tilde{\nabla}^m \tilde{A}|^2 d{\tilde{\mu}}_{\tilde{t}}\\
&+\int_{\tilde{M}_{\tilde{t}}} |\tilde{\nabla}^m
\tilde{A}|^2(\tilde{\hbar}-|\tilde{H}|^2)
d{\tilde{\mu}}_{\tilde{t}}+C\\
\leq &-2\int_{\tilde{M}_{\tilde{t}}} |\tilde{\nabla}^{m+1}
\tilde{A}|^2
d{\tilde{\mu}}_{\tilde{t}}+C\int_{\tilde{M}_{\tilde{t}}}
|\tilde{\nabla}^m \tilde{A}|^2 d{\tilde{\mu}}_{\tilde{t}}+C.
\end{split}\end{equation}

By the second interpolation inequality in Lemma \ref{interpolation},
we have
\begin{equation}\label{30}
\begin{split}
\int_{\tilde{M}_{\tilde{t}}} |\tilde{\nabla}^m \tilde{A}|^2
d{\tilde{\mu}}_{\tilde{t}}\leq& \bar{C}\bigg(
\int_{\tilde{M}_{\tilde{t}}} |\tilde{\nabla}^{m+1} \tilde{A}|^2
d{\tilde{\mu}}_{\tilde{t}}\bigg)^{\frac{m}{m+1}}\bigg(\int_{\tilde{M}_{\tilde{t}}}
|\tilde{A}|^2 d{\tilde{\mu}}_{\tilde{t}} \bigg)^{\frac{1}{m+1}}\\
\leq&\bar{C}\eta \int_{\tilde{M}_{\tilde{t}}} |\tilde{\nabla}^{m+1}
\tilde{A}|^2 d{\tilde{\mu}}_{\tilde{t}}+\bar{C}\eta^{-m}
\int_{\tilde{M}_{\tilde{t}}} |\tilde{A}|^2
d{\tilde{\mu}}_{\tilde{t}}\\
\leq& \bar{C}\eta\int_{\tilde{M}_{\tilde{t}}} |\tilde{\nabla}^{m+1}
\tilde{A}|^2 d{\tilde{\mu}}_{\tilde{t}} +\bar{\bar{C}}\eta^{-m}.
\end{split}\end{equation}

Combining (\ref{integral-ineq-dilation}) and (\ref{30}), we have
\begin{equation}\label{integral-ineq-dilation1}\begin{split}
\frac{d}{d \tilde{t}}\int_{\tilde{M}_{\tilde{t}}} |\tilde{\nabla}^m
\tilde{A}|^2 d{\tilde{\mu}}_{\tilde{t}} \leq
&-2\int_{\tilde{M}_{\tilde{t}}} |\tilde{\nabla}^{m+1} \tilde{A}|^2
d{\tilde{\mu}}_{\tilde{t}}-C\int_{\tilde{M}_{\tilde{t}}}
|\tilde{\nabla}^m \tilde{A}|^2 d{\tilde{\mu}}_{\tilde{t}}+C\\
&+2C\bar{C}\eta\int_{\tilde{M}_{\tilde{t}}} |\tilde{\nabla}^{m+1}
\tilde{A}|^2 d{\tilde{\mu}}_{\tilde{t}} +2C\bar{\bar{C}}\eta^{-m}\\
=&-C\int_{\tilde{M}_{\tilde{t}}} |\tilde{\nabla}^m \tilde{A}|^2
d{\tilde{\mu}}_{\tilde{t}}+\tilde{C}.
\end{split}\end{equation}
Here we have chosen $\eta=(C\bar{C})^{-1}$ and
$\tilde{C}=C+2C\bar{\bar{C}}(C\bar{C})^m$. Put
$f(\tilde{t})=\int_{\tilde{M}_{\tilde{t}}} |\tilde{\nabla}^m
\tilde{A}|^2 d{\tilde{\mu}}_{\tilde{t}}$, then
\begin{equation*}\begin{split}
\frac{d}{d\tilde{t}}\Big(f-\frac{\tilde{C}}{C}\Big)=\frac{d}{d\tilde{t}}f
\leq -Cf+\tilde{C} =-C\Big(f-\frac{\tilde{C}}{C}\Big).
\end{split}\end{equation*}

If $f-\frac{\tilde{C}}{C}\leq 0$ at $\tilde{t}=0$, then it holds for
any $t\in [0,\infty)$, which implies $\int_{\tilde{M}_{\tilde{t}}}
|\tilde{\nabla}^m \tilde{A}|^2 d{\tilde{\mu}}_{\tilde{t}}\leq C_m$
for some positive constant $C_m$ depending on $m$ but independent of
$\tilde{t}$.

If $f-\frac{\tilde{C}}{C}> 0$ at $\tilde{t}=0$, then
$f-\frac{\tilde{C}}{C}\leq
(f(0)-\frac{\tilde{C}}{C})e^{-C\tilde{t}}$. This also implies that
$\int_{\tilde{M}_{\tilde{t}}} |\tilde{\nabla}^m \tilde{A}|^2
d{\tilde{\mu}}_{\tilde{t}}\leq C_m$ for some positive constant $C_m$
depending on $m$ but independent of $\tilde{t}$.

From the first interpolation inequality in Lemma
\ref{interpolation}, we see that
\begin{equation*}
\int_{\tilde{M}_{\tilde{t}}} |\tilde{\nabla}^m \tilde{A}|^p
d{\tilde{\mu}}_{\tilde{t}}\leq C_{m,p}
\end{equation*}
holds for large $p$. Combining this with the Sobolev inequality in
\cite{HS}, we may carry out a Stampacchia iteration process just as
in \cite{H1} and \cite{H2} to get that $|\tilde{\nabla}^m
\tilde{A}|^2\leq \tilde{C}_m$ for a constant $\tilde{C}_m<\infty$
depending on $m$.

Hence we have proved that the convergence is in fact in
$C^\infty$-topology.


\begin{thebibliography}{10}

\bibitem{Andrews-Baker}B.  Andrews and  C. Baker: \textit{Mean curvature flow of pinched submanifolds to
spheres}, J. Differential Geom. \textbf{85}(2010), 357-395.


\bibitem{Baker}C. Baker: \textit{The mean curvature flow of submanifolds
of high codimension}, arXiv:1104.4409v1.


\bibitem{B} K. Brakke: The motion of a surface by its mean
curvature, Princeton, New Jersey: Princeton University Press, 1978.

\bibitem{Goldberg} S. Goldberg: Curvature and Homology, Academic Press, London, 1962.

\bibitem{Gu-Xu}J. R. Gu and H. W. Xu: \textit{The sphere theorems for manifolds with positive scalar
curvature}, arXiv:1102.2424v1.

\bibitem{Ha} R. Hamilton: \textit{Three-manifolds with positive Ricci curvature}, J. Differential Geom. \textbf{17}(1982),
255-306.

\bibitem{Ha2} R. Hamilton: \textit{Four-manifolds with positive curvature operator}, J. Differential Geom. \textbf{24}(1986),
153-179.

\bibitem{HS} D. Hoffman and J. Spruck: \textit{Sobolev and isoperimetric inequalities for
Riemannian submanifolds}, Comm. Pure Appl. Math. \textbf{27}(1974),
715-727.

\bibitem{H1} G. Huisken: \textit{Flow by mean curvature of convex surfaces into
spheres}, J. Differential Geom. \textbf{20}(1984), 237-266.

\bibitem{H2} G. Huisken: \textit{Contracting convex hypersurfaces in Riemannian manifolds by their mean curvature},
Invent. Math. \textbf{84}(1986), 463-480.

\bibitem{H3} G. Huisken:  \textit{Deforming hypersurfaces of the sphere by their mean curvature}, Math. Z. \textbf{195}(1987),
205-219.

\bibitem{H4} G. Huisken: \textit{Asymptotic behavior for singularities
of the mean curvature flow}, J. Differential Geom. \textbf{31}(1990), 285-299.

\bibitem{LXYZ1}K. F. Liu, H. W. Xu, F. Ye and E. T. Zhao: \textit{The extension and convergence of mean curvature flow in higher
codimension},  arXiv:1104.0971v1.


\bibitem{LXYZ2} K. F. Liu, H. W. Xu, F. Ye and E. T. Zhao:  \textit{Mean curvature flow
of higher codimension in hyperbolic spaces}, arXiv:1105.5686v1.


\bibitem{LXZ}K. F. Liu, H. W. Xu and E. T. Zhao: \textit{Deforming submanifolds of arbitrary codimension in a sphere}, preprint, 2011.


\bibitem{Mu}W. W. Mullins: \textit{Two-dimensional motion of idealized
grain boundaries}, J. Appl. Phys. {\bf27}(1956), 900-904.


\bibitem{Shiohama-Xu-97} K. Shiohama and H. W. Xu:  \textit{The topological
sphere theorem for complete submanifolds}, Compositio Math.
{\bf107}(1997), 221-232.

\bibitem{Shiohama-Xu-98}K. Shiohama and H. W. Xu: \textit{A general rigidity theorem for complete
submanifolds}, Nagoya Math. J. \textbf{150}(1998), 105-134.

\bibitem{Sm} K. Smoczyk: \textit{Longtime existence of the Lagrangian mean
curvature flow}, Calc. Var. \textbf{20}(2004), 25-46.

\bibitem{SW}K. Smoczyk and M. T. Wang: \textit{Mean curvature flows for Lagrangian
submanifolds with convex potentials}, J. Differential. Geom.
\textbf{62}(2002), 243-257.

\bibitem{WaM1} M. T. Wang: \textit{Mean curvature flow of surfaces in Einstein
four-manifolds}, J. Differential. Geom. \textbf{57}(2001), 301-338.

\bibitem{WaM4} M. T. Wang: \textit{Deforming area preserving diffeomorphism of surfaces
by mean curvature flow}, Math. Res. Lett. \textbf{8}(2001), 651-661.

\bibitem{WaM2} M. T. Wang: \textit{Long-time existence and convergence of graphic mean curvature flow
in arbitrary codimension}, Invent. Math. \textbf{148}(2002),
525-543.

\bibitem{WaM5} M. T. Wang: \textit{Gauss maps of the mean curvature flow}, Math. Res. Lett. \textbf{10}(2003), 287-299.

\bibitem{WaM6} M. T. Wang: \textit{The mean curvature flow smoothes Lipschitz submanifolds}, Comm. Anal. Geom. \textbf{12}(2004), 581-599.

\bibitem{WaM3} M. T. Wang: \textit{Lectures on mean curvature flows in higher codimensions},
Handbook of geometric analysis. No. 1, 525-543, Adv. Lect. Math.
(ALM) 7, International Press, Somerville, MA, 2008.

\bibitem{XG} H. W. Xu and J. R. Gu: \textit{An optimal differentiable sphere theorem for complete manifolds},
 Math. Res. Lett. \textbf{17}(2010), 1111-1124.

\bibitem{XYZ} H. W. Xu, F. Ye and E. T. Zhao: \textit{The extension for mean curvature
flow with finite integral curvature in Riemannian manifolds},
Science China Math. \textbf{54}(2011), 2195-2204.

\bibitem{Xu-Zhao} H. W. Xu and E. T. Zhao: \textit{Topological and differentiable sphere theorems for complete submanifolds},
Comm. Anal. Geom. \textbf{17}(2009),  565-585.


\bibitem{Zhu} X. P. Zhu: \textit{Lectures on mean curvature flows}, Studies in Advanced Mathematics 32, International Press,
Somerville, MA, 2002.






\end{thebibliography}
\end{document}